\newtheorem{theorem}{Theorem}[section]
\newtheorem{lemma}[theorem]{Lemma}
\theoremstyle{definition}
\newtheorem{definition}[theorem]{Definition}
\theoremstyle{remark}
\newcounter{smalllist}
\DeclareMathOperator*{\divg}{div}
\numberwithin{equation}{section}
\newcommand{\lb}{\label}
\newcommand{\beq}{\begin{equation}}
\newcommand{\eeq}{\end{equation}}
\newcommand{\bal}{\begin{align}}
\newcommand{\eal}{\end{align}}
\newcommand{\bals}{\begin{align*}}
\newcommand{\eals}{\end{align*}}
\newcommand{\bbN}{{\mathbb{N}}}
\newcommand{\bbR}{{\mathbb{R}}}
\newcommand{\bbP}{{\mathbb{P}}}
\newcommand{\bbZ}{{\mathbb{Z}}}
\newcommand{\bbT}{{\mathbb{T}}}
\newcommand{\calC}{{\mathcal C}}
\newcommand{\eps}{\varepsilon}
\newcommand{\del}{\delta}
\newcommand{\tht}{\theta}
\newcommand{\til}{\tilde}
\begin{document}
\title[Generalized Traveling Waves in Disordered Media]
{Generalized Traveling Waves in Disordered Media: Existence, Uniqueness, and Stability}

\author{Andrej Zlato\v s}

\address{\noindent Department of Mathematics \\ University of
Chicago \\ Chicago, IL 60637, USA \newline Email: \tt
andrej@math.uchicago.edu}


\begin{abstract}
We prove existence, uniqueness, and stability of transition fronts (generalized traveling waves) for reaction-diffusion equations in cylindrical domains with general inhomogeneous ignition reactions. We also show uniform convergence of solutions with exponentially decaying initial data to time translates of the front. In the case of stationary ergodic reactions the fronts are proved to propagate with a deterministic positive speed. Our results extend to reaction-advection-diffusion equations with periodic advection and diffusion.
\end{abstract}

\maketitle

\section{Introduction and Results} \lb{S1}

In this paper we study time-global solutions, called transition fronts or generalized  traveling waves, of reaction-diffusion equations on infinite cylinders. We consider the PDE
\beq \lb{1.0}
u_t  = \Delta u + f(x,u)
\eeq
which is used in modeling of processes such as autocatalytic chemical reactions, propagation of advantageous genes in a population, and combustion. The function $u(t,x)\in [0,1]$ is the (normalized) concentration of a reactant or allele, or temperature of  a combusting solid or gaseous medium. The non-negative reaction term $f$ accounts for increase of concentration/temperature due to a chemical reaction or burning and satisfies $f(x,0)=f(x,1)=0$. We will be particularly interested in {\it ignition reactions}, which vanish for $u$ smaller than some {\it ignition temperature} $\tht(x)>0$ and are used in the modeling of combustion, but we will also treat general non-negative reactions. The function $f$ will satisfy some uniform bounds but will otherwise be an arbitrary {\it non-periodic} function of $x$. 

We will also consider the more general equation
\beq \lb{1.1}
u_t + q(x)\cdot\nabla u = \divg(A(x)\nabla u) + f(x,u),
\eeq
with an incompressible mean-zero vector field $q$ representing advection and a uniformly elliptic diffusion operator $\divg (A\nabla)$ representing inhomogeneous diffusion. Unlike $f$, both $q$ and $A$ will be assumed periodic (with the same period).

Our main goal is the proof of existence and uniqueness of  transition fronts under very general conditions on $f$.  Moreover, we also want to prove uniform convergence of arbitrary solutions of \eqref{1.0}/\eqref{1.1} with exponentially decaying initial data to these fronts, thus describing the behavior of very general solutions of the PDEs. We will do this for \eqref{1.0}/\eqref{1.1} with ignition reactions, and also prove existence of fronts for some non-ignition reactions, on the cylindrical domain $D\equiv\bbR\times\bbT^{d-1}$ (i.e., $\bbR\times[0,1]^{d-1}$ with periodic boundary conditions). However, all our results can be extended to open connected domains $D\subseteq \bbR^d$ with a smooth boundary which are periodic in the first variable and bounded in the others, with either periodic or Neumann boundary conditions on $\partial D$ (the latter being $\nu\cdot\nabla u=0$ for \eqref{1.0} or $\nu \cdot A\nabla u=0$ and $q\cdot\nu=0$ for \eqref{1.1}, with $\nu$ the outward unit normal to $\partial D$). These include cylinders with periodically ondulating boundaries and a periodic array of holes. 

Such domains, unbounded in arbitrarily many variables, have been considered in \cite{BH} where transition fronts for {\it periodic} $f$ (as well as $q$ and $A$) have been studied. We restrict ourselves here to domains unbounded in only one variable because this is essentially the only case when transition fronts for ignition reactions can be unique even in homogeneous media (one moving right and one moving left). Moreover, the author has constructed examples of ignition reactions on $D=\bbR^2$ where no transition fronts exist \cite{Zla2}. In these $f$ is large on a sequence of concentric shells with exponentially growing radii and small elsewhere, and each non-trivial time-global solution is a {\it spatially extended pulse} (see \cite{BH2}) with $\|1-u(t,x)\|_{L^\infty_x}\to 0$ as $t\to\infty$. Nevertheless, questions about solutions of \eqref{1.0}/\eqref{1.1} on domains unbounded in several variables can also be treated by our methods. This will be done elsewhere \cite{Zla2}.

The following is the definition of a transition front from \cite{BH2}, adapted to our domain $D$.

\begin{definition} \lb{D.1.0}
A {\it transition front (moving to the right)} is a solution $w:\bbR\times D \to [0,1]$ of \eqref{1.0}/\eqref{1.1} that is global in time and satisfies for each $t\in\bbR$,
\beq \lb{1.2}
\lim_{x_1\to-\infty}w(t,x)=1 \qquad\text{and}\qquad
\lim_{x_1\to+\infty}w(t,x)=0
\eeq
uniformly in $(x_2,\dots,x_d)\in\bbT^{d-1}$. In addition, the front must have a {\it bounded width} (uniformly in time). That is, if $I_\eps(t)\subset\bbR$ is the smallest interval such that $w(t,x)\in [0,\eps]\cup [1-\eps,1]$ for $x\in D\setminus (I_\eps(t)\times\bbT^{d-1})$, then $L_{w,\eps}\equiv \sup_{t\in\bbR} |I_\eps(t)|<\infty$  for each $\eps>0$. The domain $I_{\eps_0}(t)\times\bbT^{d-1}$ for some small $\eps_0>0$ will be referred to as the {\it reaction zone}. 

We define a {\it transition front moving to the left} as above but with \eqref{1.2} replaced by
\[
\lim_{x_1\to-\infty}w(t,x)=0 \qquad\text{and}\qquad
\lim_{x_1\to+\infty}w(t,x)=1.
\]  
\end{definition}

{\it Remark.} In cylindrical domains fronts moving both right and left can exist. Since the transformation $x_1\mapsto -x_1$ interchanges the two directions, we will mostly consider fronts moving to the right.
\smallskip

The simplest case of transition fronts are  {\it traveling fronts} whose shape is time-independent. Their study goes back to the works of Kolmogorov, Petrovskii, Piskunov \cite{KPP} and Fisher \cite{Fisher} in 1937. They considered \eqref{1.0} in one spatial dimension $D=\bbR$ and with $x$-independent {\it KPP reaction} $f(u)$ (such that $0<f(u)\le f'(0)u$ for $u\in (0,1)$). In this case traveling fronts of the form $w(t,x)=W(x-ct)$ exist precisely when the {\it front speed} $c\ge c^*$, with the minimal speed being $c^*=2\sqrt{f'(0)}$. The {\it front profile} $W$ is time independent and satisfies the ODE $W_{xx}+cW_x+f(W)=0$ with $W(\infty)=0$ and $W(-\infty)=1$. The situation is the same for general {\it poisitive reactions} (such that $f(u)>0$ for $u\in(0,1)$) but the formula for $c^*>0$ is more complicated. In contrast, the front and its speed $c^*>0$ are unique for {\it ignition reactions} (such that $f(u)=0$ for $u\in[0,\tht]$ and $f(u)>0$ for $u\in (\tht,1)$, with $\tht>0$).

The ansatz $w(t,x)=W(x-cte_1)$ also works in more dimensions when $q,A,f$ are independent of $x_1$, and the answers are the same as above. In particular, the case of mean-zero shear flows $q$ and $x$-independent $A,f$ has been treated by Berestycki, Larrouturou, Lions \cite{Ber-Lar-Lions}, and Berestycki and Nirenberg \cite{Ber-Nir-2}. On the other hand, in the case of periodic $q,A,f$ (with the same period $p$), the front profile can only be expected to be time-periodic in a moving frame in the sense $w(t+p/c,x+pe_1)=w(t,x)$ for some speed $c>0$. Such {\it pulsating fronts} are of the form $w(t,x)=W(x_1-ct,x)$ with the profile $W$ decreasing in the first variable, periodic in the second variable, and satisfying
\[
\lim_{s\to-\infty} W(s,x)=1 \qquad\text{and}\qquad
\lim_{s\to+\infty} W(s,x)=0,
\]
uniformly in $x\in D$. Existence and uniqueness of pulsating fronts (with mean-zero $q$) was proved by Xin \cite{Xin3} for $x$-independent ignition reactions, and by Berestycki and Hamel \cite{BH} for $x$-periodic ignition reactions. The latter paper also treats $x$-periodic positive reactions and again proves existence of fronts with precisely the speeds $c\ge c^*$ for some $c^*>0$. In all these results it has been assumed that $\tht(x)\equiv \inf\{u>0\,|\, f(x,u)>0\}$ (called ignition temperature if it is positive) is $x$-independent (i.e., $\tht(x)=\tht$ for all $x\in D$) and $f(x,u)>0$ for $u\in (\tht,1)$.

The situation is different for disordered media, when no such ansatz exists and one has to work directly with the original PDE. Constant or periodic front profiles cannot be expected and fronts need not have a well defined speed. The definition of a transition front above has been given by Berestycki and Hamel \cite{BH2} in a more general setting and on arbitrary domains.  An alternative definition has been given by Shen \cite{Shen}, who studied fronts in random one-dimensional media and established some sufficient conditions on their existence. This formalizes an earlier definition by Matano which essentially requires the profile of the front to be a continuous function of the medium near the reaction zone. 

Due to the above difficulties, the existence of transition fronts in general disordered media has so far only been proved for \eqref{1.0} in one dimension, for some ignition reactions with $x$-independent ignition temperatures. Namely, Nolen and Ryzhik \cite{NolRyz} and independently Mellet, Roquejoffre, Sire \cite{MRS} have proved that such fronts exist on $D=\bbR$ when the reaction satisfies $a_0f_0(u)\le f(x,u)\le a_1f_0(u)$ and $f_0'(1)<0$. Here $0<a_0\le a_1<\infty$  and $f_0$  is of ignition type with $f_0(u)>0$ if and only if $u\in(\tht,1)$, $\tht>0$. Moreover, Mellet, Nolen, Roquejoffre, Ryzhik \cite{MNRR} proved that if in addition $f(x,u)=a(x)f_0(u)$ with $a(x)\in[a_0,a_1]$, then the (right-moving) front is unique. 

The usage of 1-dimensional techniques plays an important role in \cite{MNRR,MRS,NolRyz}, as does the requirement of an $x$-independent ignition temperature $\tht$. The latter has, in fact, been assumed in all previous proofs of existence of traveling/pulsating/transition fronts. We present here a new method that can handle much more general reactions, works in any dimension, and in the presence of (periodic) $q$ and $A$. In particular, we prove existence of a unique transition front when $f$ lies between two arbitrary $x$-independent ignition reactions, with possibly different ignition temperatures. We also prove existence of fronts in the more general case when the upper bound is a positive reaction, only requiring a bound on its derivative at zero (these fronts are not unique in general, as is the case for homogeneous media). We note that the requirement of a bound of this type is necessary to guarantee existence and cannot be improved except possibly by a constant (see Remark 1 after Theorem \ref{T.1.2}).

Let us now state our main results. We will start with the special case \eqref{1.0}. We will assume the following hypotheses on $f$.

\medskip

{\it (H1):  The reaction $f$ is uniformly Lipschitz with constant $K$ and  lies between two $x$-independent reactions, one of ignition type and the other positive or ignition. More specifically, there are Lipshitz functions $f_0,f_1$, decreasing on $[1-\eps,1]$ for some $\eps>0$, such that $f_0(u)\le f(x,u)\le f_1(u)$ for $(x,u)\in D\times[0,1]$. In addition, $f_0(0)=f_0(1)=f_1(0)=f_1(1)=0$, there is $\tht\in(0,1)$ such that $f_0(u)=0$ for $u\in[0,\tht]$ and $f_0(u)>0$ for $u\in(\tht,1)$, and there is $\tht'\in[0,1)$ such that $f_1(u)=0$ for $u\in[0,\tht']$ and $f_1(u)>0$ for $u\in(\tht',1)$. }

\medskip

Assume that $c_0>0$ is the speed of the unique (right-moving) traveling  front for \eqref{1.0} with $f$ replaced by the $x$-independent reaction $f_0$ (this front is of the form $w(t,x)=W(x_1-c_0t)$).  We then obtain existence of a transition front for $f$ provided $f_1'(0)<c_0^2/4$. Note that if $f_1$ is an ignition reaction, then $f_1'(0)=0$ and this condition  is automatically satisfied. In this case we also prove uniqueness of the (right-moving) front and that this unique front is a global attractor of general exponentially decaying initial data. 

Our reaction $f$ can have an $x$-dependent ignition temperature $\tht(x)\in[\tht',\tht]$ and, moreover, we do not require $f(x,u)>0$ for all $u\in(\tht(x),1)$. Because of this,  we will need to impose a very natural hypothesis (which is automatically satisfied when $\tht'=\tht$) that $f(x,\cdot)$ does not vanish after it has become large enough (except at $u=1$). 

\begin{definition} \lb{D.1.0b}
If $\zeta>0$ and $g\in C([0,1])$ with 
$g(u)>0$  for $u\in(0,1)$ are such that for
\beq \lb{1.3a}
\alpha_f(x) \equiv \inf \big( \{ u\in (0,1) \,|\, f(x,u)\ge \zeta u \} \cup \{1\} \big) 
\eeq
we have $f(x,u)\ge g(u)$ when $u\in[\alpha_f(x),1]$, then we say that $f$  {\it $\zeta$-majorizes} $g$ (on $[0,1]$). 
\end{definition}
 
{\it Remark.} Note that if $\zeta>f_1'(0)$, then  $\alpha_f(x) = \min ( \{ u\in (0,1) \,|\, f(x,u)= \zeta u \} \cup \{1\} ) >\tht'$.
\smallskip

Here is our main result for \eqref{1.0}.

\begin{theorem} \lb{T.1.2}
Let $f$ satisfy the hypotheses (H1). Assume that $f$ $\zeta$-majorizes $g$ for some $\zeta<c_0^2/4$ and some $g$ as in Definition \ref{D.1.0b}. 

(i) If $f_1'(0)<c_0^2/4$, then there exists a  transition front $w$ for \eqref{1.0}  moving to the right with $w_t>0$ (and another moving to the left).

(ii) If $f_1$ is an ignition reaction (i.e., $\tht'>0$) and $f$ is non-increasing in $u$ on $[\tht'',1]$ for some $\tht''<1$, then there is a unique (up to time shifts) transition front $w_+$ for \eqref{1.0} moving to the right (and another $w_-$ moving to the left). 

(iii) In the setting of (ii) we have convergence of solutions with exponentially decaying initial data to time shifts of $w_\pm$ in the sense of Definition \ref{D.1.0a} below. Moreover, this convergence is uniform in $f,a,u$ --- the $s_\eps$ in the definition only depends on $f_0,f_1,\zeta,g,K,\tht'',Y,\mu,\nu,\eps$ and the $L_\nu$  on $f_0,\nu$.  
\end{theorem}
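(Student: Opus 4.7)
My plan is to prove the three parts in sequence. For existence in (i), I would construct a right-moving front $w$ as a subsequential limit of solutions $u_n$ of the Cauchy problem for \eqref{1.0} on $[-n,\infty)\times D$ with sub-solution-like initial data (for instance a shifted $f_0$-traveling-front profile, which is a subsolution since $f_0\leq f$). After time-translating each $u_n$ so that $\max_{x'\in\bbT^{d-1}} u_n(0,0,x')=\theta$, parabolic regularity together with the Lipschitz bound $K$ produces a locally uniform subsequential limit $w$ solving \eqref{1.0} on $\bbR\times D$, with $w_t\geq 0$ inherited from the monotonicity in time of the $u_n$. The task is then to verify the one-sided limits and uniform bounded width: on the right, using $f_1'(0)<c_0^2/4$, I would build an exponential supersolution $A\exp(-\lambda(x_1-c_0 t))$ with $\lambda$ close to $c_0/2$ so that $-c_0\lambda+\lambda^2+f_1'(0)<0$, trapping the $\eps$-level set behind a moving profile of speed $c_0$; on the left, the $\zeta$-majorization allows comparison, once $u$ exceeds $\alpha_f$ on a wide interval, with a compactly supported $g$-bump whose positive-reaction spreading drives $u\to 1$. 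Together these give the uniform reaction-zone width required by Definition \ref{D.1.0}.

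For uniqueness in (ii), I would deploy a sliding argument. Bounded width of two right-moving fronts $w_+^{(1)},w_+^{(2)}$ yields some $\tau_0$ with $w_+^{(1)}(t+\tau_0,x)\geq w_+^{(2)}(t,x)$ everywhere; set $\tau^*=\inf\{\tau:w_+^{(1)}(t+\tau,x)\geq w_+^{(2)}(t,x)\text{ for all }(t,x)\}$. Monotonicity of $f$ in $u$ on $[\theta'',1]$ makes $w_+^{(1)}(t+\tau^*,x)+\delta$ a supersolution near $u=1$ for small $\delta>0$, while the ignition property $\theta'>0$ (inherited from $f_1\geq f$) renders the $u\leq\theta'$ region inert. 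A strong maximum principle argument at $\tau^*$ then shows that either the inequality is strict everywhere, contradicting the minimality of $\tau^*$, or equality holds identically, giving $w_+^{(1)}\equiv w_+^{(2)}$ up to the time shift $\tau^*$.

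For convergence in (iii), I would sandwich the solution $u$ between two time-shifts of the unique front $w_+$. An upper shift $s^+(t)$ comes from an exponential supersolution controlled by the decay rate $\nu$ of the initial data, and a lower shift $s^-(t)$ comes from a compactly supported subsolution triggered in the region where $u(0,\cdot)$ is close to $1$ (the width $L_\nu$ of that region is dictated only by $f_0$ and $\nu$ via the $f_0$-spreading mechanism). Comparison makes $s^+$ non-increasing and $s^-$ non-decreasing in $t$, both bounded, so they converge, and uniqueness from (ii) forces their limits to agree. Uniformity of $s_\eps$ follows because every step (parabolic regularity, $f_0$-spreading rate, $f_1$-exponential barrier, super/subsolution construction) depends only on $f_0,f_1,\zeta,g,K,\theta'',Y,\mu,\nu,\eps$.

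The main obstacle I expect is the uniform bounded-width estimate in (i): the $\zeta$-majorization hypothesis is very permissive, so $f$ may have long weakly reacting zones between the ignition threshold and $u=1$ in which the front can appear to stall before entering a well-reacting region. Overcoming this requires a careful quantitative interplay between the $g$-spreading speed, the $\alpha_f$-level-set position, and the $f_1$-exponential barrier on the right, delivered uniformly in $(t,x)$. Once bounded width holds with the stated uniformity, the sliding and sandwich arguments of (ii)-(iii) become adaptations of classical techniques.
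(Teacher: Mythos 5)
Your overall architecture for part (i) matches the paper's (monotone-in-time approximants launched from a stationary subsolution, an exponential barrier on the right, a spreading mechanism on the left), but the step you defer as ``the main obstacle I expect'' is precisely the heart of the proof, and it is not resolved in your proposal. The paper's existence proof hinges on Lemma \ref{L.2.5}: the distance between the exponential envelope $Y_n(t)$ (the rightmost shift $y$ with $u_n(t,x)\le e^{-\lambda_\zeta(x_1-y)}$) and the level set $X_n(t)$ (where $u_n$ reaches $\alpha_f$) is bounded uniformly in $n$ and $t$, because whenever the envelope is ahead of the level set it advances with average speed at most $c_\zeta=2\sqrt\zeta<c_0$ (comparison with the linear $\zeta u$ equation, which is legitimate \emph{only} in the region $x_1>X_n(t)$ where $u_n<\alpha_f(x)$ and hence $f(x,u_n)\le\zeta u_n$), while the level set advances with speed at least $c_0-\eps$ by the spreading Lemma \ref{L.2.6}; since $c_\zeta<c_0$ the gap cannot grow. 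Two further points: the bound $f(x,u)\le f_1'(0)u$ that your supersolution implicitly requires is false in general (only $f\le f_1$ holds, and $f_1$ need not lie below its tangent at $0$), which is exactly why the comparison must be coupled to the $\alpha_f$-level set and why $\zeta$, not $f_1'(0)$, governs the barrier; and the spreading speed $c_0$ on the left comes from $f_0$ via Lemma \ref{L.2.6}(ii) (Xin's result, reproved in the Appendix), with the $\zeta$-majorization by $g$ serving only to lift $u$ from $\alpha_f$ up to $(1+\tht)/2$ via a Liouville/compactness argument --- your sketch conflates these two mechanisms.

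Parts (ii) and (iii) take a genuinely different route from the paper, and both contain gaps. Your sliding argument fails at its first step: bounded width gives, for each \emph{fixed} $t_0$, a shift $\tau$ with $w_+^{(1)}(t_0+\tau,\cdot)\ge w_+^{(2)}(t_0,\cdot)$ (which then propagates forward by comparison), but a single $\tau_0$ valid for all $t\in\bbR$ requires knowing a priori that the two fronts' reaction zones remain within bounded distance as $t\to-\infty$ --- which is essentially the uniqueness statement itself, since two fronts could in principle drift linearly apart backward in time. The paper circumvents this by proving that every front attracts, forward in time and at a rate uniform in $f$ and in the front (Lemmas \ref{L.3.2}--\ref{L.3.5}), the particular solution $u$ launched from the fixed subsolution $v$ of Lemma \ref{L.2.1}, and then extracts uniqueness by a translation-and-compactness argument. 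In (iii), your initial sandwich by pure time shifts of $w_+$ is impossible whenever the decay rate $\mu$ of $u_0$ is smaller than the front's decay rate $\lambda_\zeta$: no time shift of $w_+$ then dominates $u_0$, and one must instead use a time shift composed with a time reparametrization $\beta(t)$ plus an exponential correction $\phi_\pm$ traveling at the slower speed $c_\zeta<c_0$, so that the correction decays near the reaction zone (the paper's $z_\pm=u(t\pm\beta(t),x)\pm\phi_\pm(t,x)$ in Lemma \ref{L.3.2} and Section \ref{S4}). Finally, convergence of your shifts $s^\pm(t)$ to limits does not by itself force those limits to coincide --- uniqueness of the front says nothing about distinct time shifts of the same front --- and closing that gap requires the strict-contraction step via Harnack and the strong maximum principle in the reaction zone, which is the content of Lemma \ref{L.3.4}.
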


\begin{definition} \lb{D.1.0a}
Let $w_\pm$ be some right- and left- moving transition fronts for \eqref{1.0} (or \eqref{1.1}) on $D$. We say that {\it solutions with exponentially decaying initial data converge to time shifts of $w_\pm$} if the following hold for any $Y,\mu,\nu>0$ and $a\in\bbR$.
 
(a) If $u$ solves \eqref{1.0} (or \eqref{1.1}) with initial datum 
\[
(\tht+\nu)\chi_{(-\infty,a]}(x_1)\le u_0(x) \le e^{-\mu(x_1-a-Y)},
\]
then there is $\tau_u$ such that for every $\eps>0$ there is $s_\eps>0$ such that for each $t\ge s_\eps$,
\beq \lb{1.4}
\|u(t,x) - w_+(t+\tau_u,x)\|_{L^\infty_x} < \eps
\eeq
(and similarly for solutions exponentially decaying as $x_1\to -\infty$ and for $w_-$).

(b) There is $L_\nu<\infty$ such that if $L\ge L_\nu$ and $u$ solves \eqref{1.0} (or \eqref{1.1}) with initial datum 
\[
(\tht+\nu)\chi_{[a-L,a+L]}(x_1)\le u_0(x) \le \min\{e^{-\mu(x_1-a-L-Y)}, e^{\mu(x_1-a+L+Y)}\},
\]
then there are $\tau_{u,\pm}$ such that for every $\eps>0$ there is $s_\eps>0$ such that for each $t\ge s_\eps$,
\beq \lb{1.5}
\|u(t,x) - w_+(t+\tau_{u,+},x) - w_-(t+\tau_{u,-},x) + 1\|_{L^\infty_x} < \eps.
\eeq
\end{definition}

As mentioned before, transition fronts need not exist in general (and are typically not unique) when the domain $D$ is unbounded in more than one variable, even for ignition reactions.
We next make several remarks which illuminate the necessity of the assumptions in Theorem \ref{T.1.2} on cylindrical domains $D$, and thus show that our result is {\it qualitatively sharp}.
\smallskip

{\it Remarks.}  1. The main condition here is $f_1'(0)<c_0^2/4$. Some condition of this type is necessary for existence of fronts, as can be seen from \cite{RoqZla}. There Roquejoffre and the author provide examples with $D=\bbR$ and $f_1'(0)$ arbitrarily close to $c_0^2$ where no transition fronts exist! In these examples $f$ is of KPP type on some bounded interval in $D$ and ignition elsewhere on $D$, and each non-trivial time-global solution is a spatially extended pulse with $\|u(t,x)\|_{L^\infty_x}\to 0$ as $t\to -\infty$.
\smallskip

2. The  condition $f_1'(0)<c_0^2/4$ is equivalent to $2\sqrt{f_1'(0)}< c_0$, meaning that minimal-speed fronts for KPP reactions $f(u)$ with $f'(0)=f_1'(0)$ are slower than the front for $f_0(u)$.  It then follows that the graph of $f_1'(0)u$ must intersect that of $f_0(u)$ at some $u>0$ because the minimal front speed is monotone with respect to the reaction. Thus we cannot treat the case when $f_1$ is a KPP reaction, which is not surprising in the light of the previous remark. 
\smallskip

3. Some condition of non-vanishing of $f$ after it has become large is also needed, otherwise no transition front connecting 0 and 1 might exist. An example of such a situation can be obtained by taking $f(x,u)=f(u)$ with $f$ non-zero only on $(\tfrac 13,\frac 23)\cup(\tfrac 23, 1)$ and much larger on the first interval than on the second --- it can be then showed that only fronts connecting 0 and $\tfrac 23$ exist. Nevertheless, our $f(x,\cdot)$ can be essentially an arbitrary function on the interval $[0,\alpha_f(x)]$, only required to be Lipschitz and lie between $f_0$ and $f_1$. In particular, it can vanish on an arbitrary closed subset of $[0,\tht]$, unlike in previous works where the reaction is required to vanish precisely on $[0,\til\tht]\cup\{1\}$ for a fixed $x$-independent $\til\tht\in[0,1)$. Thus our result is new even in the case of  periodic $f$.
\smallskip

4. As mentioned earlier, in the general positive reaction case (i) transition fronts are not unique even in homogeneous media.
\smallskip

5. We also note that some decay assumption on $u_0$ in (iii) needs to be made. Indeed, it is not hard to show that if $f(x,u)=f_0(u)$ and $u_0$ decays slowly enough, then $u$ will ``overtake'' all time shifts of $w_+$.
\smallskip

6. $L_\nu<\infty$ in Definition \ref{D.1.0a}(b) is such that solutions of \eqref{1.0} with $f_0$ in place of $f$ and $u_0(x)\ge (\tht+\nu) \chi_{[-L_\nu,L_\nu]}$ are guaranteed to spread, that is, $u(t,x)\to 1$ locally uniformly in $x\in D$ as $t\to\infty$. This can be taken to be the $L_\nu$ from Lemma \ref{L.5.1}.
\smallskip


Theorem \ref{T.1.2} extends to the more general case of \eqref{1.1} with periodic $q$ and $A$ which satisfy the following hypotheses. 

\medskip 

{\it (H2):  The flow $q\in Lip(D)$ is incompressible $\nabla\cdot q\equiv 0$, $p$-periodic in $x_1$, and with mean-zero first coordinate $\int_{\calC} q_1(x)\,dx=0$ (where $\calC=[0,p]\times\bbT^{d-1}$).  The  matrix $A\in C^{1,1}(D)$ is symmetric, $p$-periodic, and with $\underline AI\le A(x)\le \bar AI$ for some $0<\underline A\le\bar A<\infty$ and all $x\in D$. }

\medskip

Again let $c_0>0$ be the speed of the unique (right-moving) pulsating front for \eqref{1.1} with $f$ replaced by $f_0$  \cite{BH}.  We also let 
$\zeta_0>0$ be such that the minimal pulsating front speed for \eqref{1.1} with $f$ replaced by $\zeta_0 u(1-u)$ is $c_0$ \cite{BHN1}. Equivalently, $\zeta_0$ is the unique positive number such that the right hand side of \eqref{2.8c} below with $\zeta$ replaced by $\zeta_0$ equals $c_0$. For the left-moving front we have a possibly different speed $c_0^->0$, and we define $\zeta_0^->0$ accordingly, with $e_1,q_1$ replaced by $-e_1,-q_1$ in \eqref{2.8a}. The condition $f_1'(0)<c_0^2/4$ is now replaced by  $f_1'(0)<\zeta_0$ and our main result for \eqref{1.1} is  as follows.

\begin{theorem} \lb{T.1.1}
Let $q,A,f$ satisfy the hypotheses (H1), (H2). 
Assume that $f$ $\zeta$-majorizes $g$ for some $\zeta<\zeta_0$ and some $g$  as in Definition \ref{D.1.0b}.

(i)  If  $f_1'(0)<\zeta_0$, then there exists a transition front $w$ for \eqref{1.1} moving to the right with $w_t>0$ (and another moving to the left when $\zeta_0$ is replaced by $\zeta_0^-$ in the hypotheses). 

(ii) If $f_1$ is an ignition reaction (i.e.,  $\tht'>0$) and $f$ is non-increasing in $u$ on $[\tht'',1]$ for some $\tht''<1$, then there is a unique (up to time shifts) transition front $w_+$ for \eqref{1.1} moving to the right (and another $w_-$ moving to the left). 

(iii) In the setting of (ii) we have convergence of solutions with exponentially decaying initial data to time shifts of $w_\pm$ in the sense of Definition \ref{D.1.0a}. Moreover, this convergence is uniform in $f,a,u$ --- the $s_\eps$ in the definition depends only on $q,A,f_0,f_1,\zeta,g,K,\tht'',Y,\mu,\nu,\eps$ and the $L_\nu$ on $q,A,f_0,\nu$.  
\end{theorem}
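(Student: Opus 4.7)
The plan is to prove Theorem \ref{T.1.1} by extending the scheme outlined for Theorem \ref{T.1.2}, with the unique one-dimensional traveling front for $f_0$ replaced by the unique $p$-periodic pulsating front for \eqref{1.1} with reaction $f_0$ (whose existence and speed $c_0>0$ are supplied by \cite{BH}, using the mean-zero hypothesis on $q_1$). For existence in (i), I would build approximate fronts by solving \eqref{1.1} on $(-n,\infty)\times D$ with initial data $u_{0,n}(x)$ that equals $1$ for $x_1\ll 0$, equals $(\tht+\nu_0)$ on a long plateau, and decays like $e^{-\mu x_1}$ for $x_1\gg 0$, with $\mu$ chosen so that $e^{-\mu x_1}\phi(x)$ (times a periodic eigenfunction $\phi$) is a super-solution of the linearization at $u=0$ of \eqref{1.1} with $f_1'(0)u$ in place of $f$. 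After a time-translation $\sigma_n\in\bbR$ normalizing the level set $\{w_n=\tht+\nu_0\}$ to cross a fixed cross-section $\{0\}\times\bbT^{d-1}$, parabolic interior regularity yields a subsequential locally uniform limit $w:=\lim w_n$ that solves \eqref{1.1} globally in time. What must then be verified is the convergence \eqref{1.2} and the bounded-width condition in Definition \ref{D.1.0}.

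The two essential estimates are speed bounds on $w_n$. For the \emph{lower bound}, once the solution exceeds $\tht+\nu_0$ on a slab of $x_1$-width at least $L_{\nu_0}$, the $\zeta$-majorization together with the spreading lemma (the analogue of Lemma \ref{L.5.1} for \eqref{1.1}) lets one insert from below a shifted pulsating front for $f_0$ of speed $c_0$; the comparison principle then forces the left edge of the reaction zone to advance at an average speed at least $c_0$. For the \emph{upper bound}, the hypothesis $f_1'(0)<\zeta_0$ is used as follows: choose $\lambda>0$ and $c<c_0$ so that $c$ is strictly larger than the minimal pulsating KPP speed for the reaction $f_1'(0)u$, as characterized by the formula behind \eqref{2.8c}; then $C\phi_\lambda(x)e^{-\lambda(x_1-ct)}$, with $\phi_\lambda$ the principal periodic eigenfunction of the associated eigenvalue problem, is a super-solution of \eqref{1.1} (using $f\le f_1\le f_1'(0)u$ at small $u$). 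This dominates the initial datum and propagates forward, pinning the leading edge of $w_n$ to lie to the left of a hyperplane moving at speed $c<c_0$.

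The main obstacle is combining these two bounds into a \emph{uniform} bound on the width of the reaction zone, and this is where the strict inequality $\zeta<\zeta_0$ enters. The super-solution only controls the set where $w_n$ is very small (say $\le\eps$); one still has to keep the level sets $\{w_n=\tht+\nu_0\}$ and $\{w_n=1-\eps\}$ within bounded distance of each other. The idea is that on the region where $w_n\ge \alpha_f(x)$ the reaction majorizes the positive function $g$, so spreading drives $w_n$ from $\tht+\nu_0$ up to $1-\eps$ in a time depending only on $g,f_0,\zeta,\nu_0,\eps$; and the upper-bound super-solution forbids the $(\tht+\nu_0)$-level from trailing too far behind the moving hyperplane. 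Together these force $L_{w_n,\eps}<\infty$ uniformly in $n$ and in the time-slice, which passes to the limit $w$. Strict monotonicity $w_t>0$ is propagated from the approximations by choosing $u_{0,n}$ monotone in $n$ after suitable time shifts.

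For (ii) and (iii) I would use the sliding method. Under the ignition hypothesis $\tht'>0$ and the monotonicity of $f$ on $[\tht'',1]$, the comparison principle applied to differences of two right-moving fronts $w_+^{(1)},w_+^{(2)}$ gives, via the bounded-width estimate and the uniform decay of $1-w_+^{(i)}$ as $x_1\to-\infty$, a time shift $\tau$ so that $w_+^{(1)}(t+\tau,\cdot)\ge w_+^{(2)}(t,\cdot)$; decreasing $\tau$ to its sharp value and invoking the strong maximum principle yields equality and hence uniqueness. For (iii), the lower bound in the sandwich on $u_0$ and the spreading lemma provide a sub-solution that is, for $t$ large enough, itself a time-shifted front $w_+(\cdot+\tau_-,\cdot)$; the exponential upper bound on $u_0$ is propagated by the same super-solution built in the existence proof and squeezed between two time shifts $w_+(\cdot+\tau_+,\cdot)$ by the monotonicity $w_t>0$. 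The sliding step then collapses $\tau_+=\tau_-=\tau_u$, giving \eqref{1.4}; part (b) of Definition \ref{D.1.0a} follows by applying (a) in each direction and using \eqref{1.5} as a consequence of the disjoint-supports argument once $L\ge L_\nu$. Uniformity of $s_\eps$ in $f,a,u$ is automatic because every quantitative step above depends only on $q,A,f_0,f_1,\zeta,g,K,\tht'',Y,\mu,\nu$ and the chosen $\eps$.
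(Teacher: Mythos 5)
Your overall architecture for part (i) (approximate solutions released at receding times, a lower speed bound from spreading and an upper speed bound from an exponential supersolution, then compactness) matches the paper's, but the upper bound as you state it contains a genuine error. You claim that $C\phi_\lambda(x)e^{-\lambda(x_1-ct)}$ is a supersolution of \eqref{1.1} ``using $f\le f_1\le f_1'(0)u$ at small $u$.'' The inequality $f_1(u)\le f_1'(0)u$ is false in general: for an ignition $f_1$ one has $f_1'(0)=0$ while $f_1>0$ on $(\tht',1)$, and even in case (i) Remark~2 after Theorem~\ref{T.1.2} points out that $f_0(u)>f_1'(0)u$ for some $u$. The only globally valid linear bound is $f_1(u)\le\xi u$ with $\xi=\sup_u f_1(u)/u$, whose associated speed $c_\xi$ may exceed $c_0$, so that supersolution moves too fast to pin the leading edge. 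This is exactly why the paper works with the $\zeta$-linear supersolution $\Psi$ (speed $c_\zeta<c_0$), which is a supersolution \emph{only on the region where} $u<\alpha_f(x)$, i.e.\ where $f(x,u)<\zeta u$; the entire content of Lemma~\ref{L.2.5} is the race argument (with the stopping time $t_2$ and the restricted domain $\til D$) showing that the exponential envelope $Y_n$ never gets more than a bounded distance ahead of the $\alpha_f$-level $X_n$, because whenever it does it can only advance at speed $c_\zeta$ while $X_n$ advances at speed close to $c_0$. Your proposal uses $\zeta$-majorization only for the ``burn up to $1-\eps$'' step and assigns the leading-edge control to $f_1'(0)$, which does not close.

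For (ii)--(iii) you propose the classical sliding method, which is genuinely different from, and weaker than, what the paper does. In a disordered medium the critical time shift $\tau=\inf\{\tau: w^{(1)}(\cdot+\tau,\cdot)\ge w^{(2)}\}$ need not produce an attained touching point, so the strong maximum principle cannot be invoked directly; moreover ``decreasing $\tau$ to its sharp value'' requires a quantitative mechanism preventing the two fronts from merely approaching each other asymptotically in $t$ or $x_1$. The paper resolves this by (a) first proving the exponential-decay estimate \eqref{3.1} for an \emph{arbitrary} front via the same race argument, (b) establishing the perturbed-ordering stability Lemma~\ref{L.3.2}, whose supersolution $u(t+\beta(t),x)+\phi_+(t,x)$ crucially exploits a uniform positive lower bound $\omega$ on $u_t$ in the reaction zone (eq.~\eqref{3.7}) and the fact that $\phi_+$ travels at speed $c_\zeta<c_0$ and hence decays near the reaction zones, and (c) extracting the touching point by translation and compactness in Lemma~\ref{L.3.4} before applying the strong maximum principle (the argument from \cite{MNRR}). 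Finally, the uniformity of the convergence rate in $f$ and $w$ (Lemma~\ref{L.3.5}) is not ``automatic'' as you assert --- it is proved by a separate contradiction-compactness argument and is itself an essential ingredient of the uniqueness proof, since uniqueness is deduced by letting two fronts attract the translated solutions $u_n$ at a common rate. As written, your proposal is missing the ideas that make both the upper speed bound and the uniqueness argument work in non-periodic media.
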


{\it Remarks.} 1. Although the front speed is not well defined in general disordered media, it is easy to see from our proof that the reaction zone of $w_\pm$ moves with speed $\ge c_0$ resp.~$\ge c_0^-$. We also prove an $f$-independent bound on the width of $w_\pm$ (see the remark after the proof of Lemma \ref{L.3.1} and the last paragraph of Section \ref{S3}). In addition, $w_+$ decays at an exponential rate $\ge\lambda_\zeta$ as $x_1\to\infty$ (and similarly $w_-$ as $x_1\to -\infty$) determined from \eqref{2.8c} (see \eqref{3.1}). In fact, $\lambda_\zeta$ can be replaced here by any $\lambda$ such that the fraction in \eqref{2.8c} is smaller than $c_0$. 
\smallskip


2. The proof of Theorem \ref{T.1.1} can, in fact, be made independent of previous results on transition fronts. In particular, we can prove that a unique pulsating front with speed $c_0$ exists for \eqref{1.1} with the $x$-independent reaction $f_0$ (which we need in order to state Theorem~\ref{T.1.1}). This is done in the proof of Lemma \ref{L.5.1} below. The only result we will need is convexity of the function $\kappa(\lambda)$ in  \eqref{2.8a} and $\kappa'(0)=0$, proved in \cite[Proposition 5.7(iii)]{BH}.
\smallskip 



We next note that the ignition fronts in (ii) satisfy the earlier mentioned definition of Matano where the shape of the front is a continuous function of the medium in the neighborhood of the reaction zone. This follows from the uniform convergence of general solutions to the front in (iii). We provide an application of this principle to periodic and random media.

\begin{theorem} \lb{T.1.3}
Assume the hypotheses of Theorem \ref{T.1.1}(ii) and that $f$ is also $p$-periodic in $x_1$. Then there is $c_\pm>0$ such that the transition front $w_\pm$ from that theorem satisfies $w_\pm(t+p/c_\pm,x\pm pe_1)=w_\pm(t,x)$. This $c_\pm$ is` the speed of $w_\pm$ (i.e., $w_\pm$ is time-periodic in the frame moving right/left at the speed $c_\pm$).
\end{theorem}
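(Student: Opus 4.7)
The plan is to deduce the result directly from the uniqueness statement in Theorem \ref{T.1.1}(ii). Since $q$, $A$, and $f$ are now all $p$-periodic in $x_1$, the PDE \eqref{1.1} is invariant under the spatial translation $x\mapsto x - pe_1$; hence $\tilde w(t,x) := w_+(t, x-pe_1)$ is again a solution. I would verify that $\tilde w$ is again a right-moving transition front in the sense of Definition \ref{D.1.0}: the limits $\lim_{x_1\to\pm\infty}\tilde w(t,x)$ coincide with $\lim_{y_1\to\pm\infty} w_+(t,y)\in\{0,1\}$ under $y=x-pe_1$, and the reaction zone of $\tilde w$ is just the $pe_1$-translate of that of $w_+$, so the bounded-width constants $L_{w,\eps}$ transfer. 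By the uniqueness assertion in Theorem \ref{T.1.1}(ii), there exists $\tau\in\bbR$ with
\[
w_+(t, x-pe_1) = w_+(t+\tau, x) \qquad \text{for all } (t,x)\in\bbR\times D,
\]
equivalently $w_+(t+\tau, x+pe_1)=w_+(t,x)$.

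The heart of the proof is then to establish $\tau>0$; this is the only step requiring real work. Suppose for contradiction that $\tau\le 0$. Since $(w_+)_t>0$ by Theorem \ref{T.1.1}(i) (which applies to the unique front $w_+$ up to time shift), we have $w_+(t+\tau,x)\le w_+(t,x)$, and combined with the previous identity this yields $w_+(t,y)\le w_+(t, y+pe_1)$ for every $(t,y)\in\bbR\times D$. Iterating gives $w_+(t,y)\le w_+(t, y+npe_1)$ for every $n\in\bbN$. Choosing any $y$ with $y_1$ sufficiently negative that $w_+(t,y)>0$ (possible because $\lim_{y_1\to-\infty} w_+(t,y)=1$) then contradicts $\lim_{x_1\to+\infty} w_+(t,x)=0$. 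Hence $\tau>0$, and setting $c_+:=p/\tau$ rewrites the identity as $w_+(t+p/c_+, x+pe_1)=w_+(t,x)$, as claimed.

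The argument for the left-moving front $w_-$ is entirely symmetric: take $\tilde w_-(t,x):= w_-(t, x+pe_1)$, apply uniqueness to obtain $w_-(t, x+pe_1)=w_-(t+\tau^-, x)$ for some $\tau^-\in\bbR$, and exclude $\tau^-\le 0$ using $(w_-)_t>0$ together with $\lim_{x_1\to-\infty} w_-(t,x)=0$. Setting $c_-:=p/\tau^-$ yields $w_-(t+p/c_-, x-pe_1)=w_-(t,x)$. The parenthetical statement that $c_\pm$ is the speed of $w_\pm$ is then just a restatement of these pulsating identities, since time-periodicity of $w_\pm$ with period $p/c_\pm$ in the frame moving right/left at speed $c_\pm$ is exactly what they assert. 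I do not anticipate any serious obstacle beyond the sign of $\tau$; everything else is a formal consequence of translation invariance and uniqueness.
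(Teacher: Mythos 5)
Your proposal is correct, but it takes a different (and arguably more direct) route than the paper. The paper deduces the pulsating identity from part (iii): it takes two solutions $u,u'$ with initial data $u_0(x)$ and $u_0(x+pe_1)$, notes they are $p$-translates of each other for all time, and lets each converge to a time shift of the unique front, so that the spatial translation relation transfers to the time shifts and $c_+=p/(\tau_u-\tau_{u'})$. You instead apply the uniqueness assertion (ii) directly to the spatial translate $w_+(t,x-pe_1)$, which is again a right-moving transition front for the same equation by periodicity of $q,A,f$, obtaining $w_+(t,x-pe_1)=w_+(t+\tau,x)$; this is legitimate and bypasses the stability machinery of part (iii) entirely. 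The price is that you must separately rule out $\tau\le 0$, which you do correctly via $(w_+)_t>0$ (available because every front is a time shift of the monotone front constructed in Section \ref{S2}) together with iteration of $w_+(t,y)\le w_+(t,y+npe_1)\to 0$ against $\lim_{y_1\to-\infty}w_+(t,y)=1$. The paper's route gets the positivity of $c_+$ essentially for free from the ordering of the attraction times of the two translated solutions, while yours is self-contained given (ii) and monotonicity. Both arguments are sound; yours is a clean alternative.
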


{\it Remark.} This result is new when the ignition temperature $\tht(x)$ is not constant, as well as when $f(x,u)$ is allowed to vanish for $u>\tht(x)$.

\begin{proof}
Consider only $w_+$. Let $u$ and $u'$ be solutions of \eqref{1.1} with initial data $u_0(x)$ and $u_0(x+p)$, with any $u_0$ as in the first part of (iii). Then $u,u'$ are $p$-translates of each other (in $x_1$), and so the same must be true about $w_+(t+\tau_u,x)$ and $w_+(t+\tau_{u'},x)$. The result follows with $c_+\equiv p/(\tau_u-\tau_{u'})$.
\end{proof}

Although fronts in disordered media do not have well-defined speeds in general, our results can be used to show that there is a deterministic asymptotic speed of fronts for \eqref{1.0}/\eqref{1.1} when $f$ is random (stationary and ergodic with respect to translations in $x_1$). The {\it asymptotic front speed} of a transition front $w$ is defined as 
\beq\lb{1.7}
c\equiv \lim_{|t|\to\infty} \frac {|\til X_w(t)|}{|t|},
\eeq
provided the limit exists. Here $\til X_w(t)$ is the first coordinate of some point in the reaction zone of the front, for instance, one could take  $\til X_w(t)$ such that $w(t,x)=\tfrac 12$ for some $x=(\til X_w(t),x_2,\dots,x_d)$. Such $\til X_w(t)$ may not be unique but this is not important because the requirement of a bounded width of the front $w$ shows that the limit in \eqref{1.7} is independent of the choice.

\begin{theorem} \lb{T.1.4}
Consider a probability space $(\Omega,\bbP,\mathcal{F})$ and let the random reaction $f_\omega(x,u)$  satisfy the hypotheses of Theorem \ref{T.1.1}(ii) (or Theorem~\ref{T.1.2}(ii)) uniformly in $\omega\in\Omega$. In addition, assume that $f_\omega$ is  stationary and ergodic. That is, assume that there is a group $\{\pi_k\}_{k\in \bbZ}$ (or $\{\pi_y\}_{y\in \bbR}$) of measure preserving transformations acting ergodically on $\Omega$ such that $f_{\pi_k\omega}(x,u)=f_\omega(x-kpe_1,u)$ (or $f_{\pi_y\omega}(x,u)=f_\omega(x-ye_1,u)$).  Then there is $c_\pm>0$ such that the $\omega$-dependent transition front $w_{\pm,\omega}$  from that theorem has asymptotic speed $c_\pm$ for almost all $\omega\in\Omega$.
\end{theorem}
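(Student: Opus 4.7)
The plan is to use the uniqueness of the transition front (Theorems \ref{T.1.1}(ii) and \ref{T.1.2}(ii)) to build an additive cocycle over $\Omega$ whose ergodic average is the reciprocal of the asymptotic speed. I describe the discrete case $\{\pi_k\}_{k\in\bbZ}$ and the right-moving front $w_+$; the continuous case uses the continuous-parameter ergodic theorem in the same way, and $w_-$ is handled by symmetry. First, fix for each $\omega$ a canonical representative of $w_{+,\omega}$ within its time-translation class by requiring $\tilde X_{w_{+,\omega}}(0)=0$ (choosing $\tilde X$ measurably, e.g.\ as the smallest first coordinate of a $\tfrac12$-level point; $w_t>0$ makes this choice strictly increasing in $t$). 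For each $k\in\bbZ$, the $p$-periodicity of $q,A$ together with the stationarity identity $f_{\pi_k\omega}(x,u)=f_\omega(x-kpe_1,u)$ shows that $(t,x)\mapsto w_{+,\omega}(t,x-kpe_1)$ solves \eqref{1.1} for the reaction $f_{\pi_k\omega}$ and is again a right-moving transition front with the same width bound. Uniqueness then yields a unique $s_k(\omega)\in\bbR$ with
\beq \lb{ZLcocyc}
w_{+,\omega}(t,x-kpe_1)=w_{+,\pi_k\omega}(t+s_k(\omega),x),
\eeq
from which the cocycle identity $s_{k+m}(\omega)=s_k(\omega)+s_m(\pi_k\omega)$ follows directly, so $s_n(\omega)=\sum_{j=0}^{n-1}s_1(\pi_j\omega)$. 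Evaluating \eqref{ZLcocyc} at $t=0$ at the $\tfrac12$-level yields the position-matching identity $\tilde X_{\pi_k\omega}(s_k(\omega))=kp$.

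Second, I would prove uniform bounds $0<\delta\le s_1(\omega)\le\Delta<\infty$. Positivity follows from $\tilde X_{\pi_1\omega}(s_1(\omega))=p$, the normalization $\tilde X_{\pi_1\omega}(0)=0$, and the monotonicity $w_t>0$. The upper bound $\Delta$ is supplied by the $f$-uniform lower speed bound $c_0$ and the $f$-uniform width bound of Remark~1 after Theorem~\ref{T.1.1}: any level set of $w_{+,\pi_1\omega}$ must cross $x_1=p$ in time at most $(p+L)/c_0$ for some $f$-independent $L$. The lower bound $\delta$ stems from an $f$-uniform upper speed bound, a consequence of the exponential decay $\ge\lambda_\zeta$ as $x_1\to\infty$ (also Remark~1) via comparison with an exponentially travelling supersolution. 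Thus $s_1\in L^\infty(\Omega)\subseteq L^1(\Omega)$, and by ergodicity of $\pi_1$ together with Birkhoff's theorem,
\[
\frac{s_n(\omega)}{n}\longrightarrow \bar s := \bbE[s_1]\in[\delta,\Delta] \quad\text{a.s.;}
\]
the same argument applied to $\pi_{-1}$ (also ergodic) yields $s_{-n}(\omega)/(-n)\to\bar s$ a.s.

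Finally I would convert this into a statement about $\tilde X_{w_{+,\omega}}(t)$. Replacing $\omega$ by $\pi_{-k}\omega$ in \eqref{ZLcocyc} and using $s_k(\pi_{-k}\omega)=-s_{-k}(\omega)$ gives $\tilde X_\omega(-s_{-k}(\omega))=kp$, so strict monotonicity of $\tilde X_\omega$ identifies $T_k(\omega):=\inf\{t\ge 0:\tilde X_\omega(t)\ge kp\}$ with $-s_{-k}(\omega)$; hence $T_n(\omega)/n\to\bar s$ a.s. Combined with $\tilde X_\omega(T_n(\omega))=np$, the boundedness of the increments $T_{n+1}-T_n$ (from $s_1\le\Delta$), and monotonicity of $\tilde X_\omega$, this gives $\tilde X_\omega(t)/t\to p/\bar s$ a.s.\ as $t\to+\infty$; the symmetric identity for negative $k$ covers $t\to-\infty$. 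Setting $c_+:=p/\bar s>0$ completes the proof for $w_+$, and $w_-$ is analogous with $e_1\to -e_1$. The main technical point is Step~2: the $f$-uniform upper and lower bounds on the propagation speed that make $s_1$ bounded away from $0$ and $\infty$. These bounds are not stated directly in Theorems \ref{T.1.1} and \ref{T.1.2} but must be read off from the quantitative inputs behind them --- the uniform exponential tail and uniform width properties summarized in Remark~1 after Theorem~\ref{T.1.1}. Once these are secured, the cocycle-plus-Birkhoff scheme runs routinely.
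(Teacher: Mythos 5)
Your proof is essentially correct, but it takes a genuinely different route from the paper. The paper never touches the front directly at this stage: it defines hitting times $\tau_{m,n}(\omega)$ for the monotone solutions $u_m$ with the deterministic initial data $v(\cdot-mpe_1)$, observes that the comparison principle gives only \emph{subadditivity} $\tau_{m,n}\le\tau_{m,k}+\tau_{k,n}$ together with stationarity and the two-sided linear bounds $\tau_{m,n}\in[C_0(n-m),C_1(n-m)]$ from \eqref{3.3c}--\eqref{3.4c}, invokes Kingman's subadditive ergodic theorem, and only at the very end transfers the conclusion to $w_{+,\omega}$ via the uniform convergence of $u_0$ to the front (part (iii) of the theorem). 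You instead exploit uniqueness (part (ii)) to turn the time shifts relating $w_{+,\omega}(\cdot,\cdot-kpe_1)$ and $w_{+,\pi_k\omega}$ into an \emph{additive} cocycle, so Birkhoff suffices and no appeal to the stability statement is needed; the price is that you must select the normalized front measurably in $\omega$ (your normalization $\til X_{w}(0)=0$ may not be attainable if $\til X_w$ jumps over $0$ --- the paper's normalization $w(0,0)=\tht$, which is attained by continuity and \eqref{3.4b}, is safer), whereas the paper's $\tau_{m,n}$ are manifestly measurable. Two small corrections to your Step 2: the uniform lower bound $\delta$ does not follow from the exponential supersolution for a \emph{single} period, since \eqref{3.3b} combined with \eqref{3.1} and the width bound only yields $p\le c_\xi s_1(\omega)+C'$ with a constant $C'$ that may exceed $p$; but you do not need $\delta$ at all --- $s_1>0$ pointwise together with $s_1\le\Delta$ already gives $s_1\in L^1$ and $\bar s=\bbE[s_1]>0$ (or telescope the bound over $n$ periods to get $\bar s\ge p/c_\xi$ directly). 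With these repairs the cocycle-plus-Birkhoff scheme goes through and is, if anything, a more economical argument than the paper's.
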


{\it Remarks.} 1. Theorem \ref{T.1.1}(iii) then shows that solutions with (large enough) exponentially decaying initial data  almost surely spread with asymptotic speeds $c_+$ to the right and $c_-$ to the left.
\smallskip

2. Theorem \ref{T.1.4} has been proved in \cite{NolRyz} for \eqref{1.0} in the one-dimensional setting $D=\bbR$ and with the random reaction function $f_\omega(x,u)=a(x,\omega)f_0(u)$, where $a$ is bounded below and above by positive constants and $f_0$ is of ignition type.

\begin{proof}
Consider only $w_+$ and the case of $p$-periodic $q,A$. Let $v(x)$ be the function from Lemma \ref{L.2.1} below with $\til\tht$ chosen as at the beginning of Section \ref{S3}. Let $u_m$ solve \eqref{1.1} with initial condition $u_m(0,x)=v(x-mpe_1)$ (so that $(u_m)_t>0$). For integers $n\ge m$ define
\[
\tau_{m,n}(\omega) \equiv \inf \big\{ t\ge 0 \,\big|\, u(t,x)\ge v(x-npe_1) \text{ for all $x\in D$} \big\}.
\]
Then the proof of Theorem \ref{T.1.1} (more precisely, \eqref{3.3c} and \eqref{3.4c} below) shows that $\tau_{m,n}(\omega) \in [C_0(n-m),C_1(n-m)]$ for some $0<C_0<C_1<\infty$ and all $\omega$.

The comparison principle shows that $\tau_{m,n}(\omega) \le \tau_{m,k}(\omega) + \tau_{k,n}(\omega)$ when $m\le k \le n$. We also have $\tau_{m+k,n+k}(\pi_k\omega) = \tau_{m,n}(\omega)$ for $k\in\bbZ$. Since the group $\{ \pi_k \}_{k\in\bbZ }$ acts ergodically on $\Omega$, the subadditive ergodic theorem shows that there is $\tau_+\in[C_0,C_1]$ such that
\[
\tau_+ = \lim_{n\to\infty} \frac{\tau_{0,n}(\omega)} n = \lim_{n\to \infty} \frac{\tau_{-n,0}(\omega)} n 
\]
for almost all $\omega$. Uniform convergence (in $\omega$) of the solution $u_0$ to the front $w_{+,\omega}$ and the proof of Theorem \ref{T.1.1} (more precisely, \eqref{3.1} below) then show  that $c_+=p/\tau_+$ is the asymptotic speed of $w_{+,\omega}$ almost surely.
\end{proof}


Let us finish this introduction with a brief description of the proof of Theorem \ref{T.1.1}. In Section \ref{S2} we construct the front as a limit of a (sub)sequence of special solutions $u_n$ of \eqref{1.1}, increasing in time and initially (at a sequence of times $\tau_n\to -\infty$) supported increasingly farther to the left. The sequence $\tau_n$ is chosen so that the reaction zone for $u_n$ arrives at the origin at $t=0$. The main issue is to show that the $u_n$ have a {\it  uniformly (in $n$) bounded width}  in the sense of Definition \ref{D.1.0}. In Section \ref{S3} we show that if $w$ is any transition front, then each $u_n$ converges to a time shift of $w$ in $L^\infty_x$, and the rate of this convergence is {\it uniform in $n$}. This will show that any two fronts must be time shifts of each other. Finally, we will recycle this argument in Section \ref{S4} to show  $L^\infty_x$-convergence of $u_n$ to a time shift of any solution $u$ as in Theorem \ref{T.1.1}(iii), again at a {\it uniform rate (in $u$)}. Since $u_n$ also converges uniformly to a time shift of $w$, the same will be true for $u$. 

This also proves the special case, Theorem \ref{T.1.2}. Finally, in the Appendix we show how to make our proof independent of previous results by using our arguments to obtain a slight improvement of Lemma \ref{L.2.6}(ii) below, which is from \cite{Xin}.

The author thanks Jean-Michel Roquejoffre for useful discussions. Partial support by the NSF grant DMS-0632442 and an Alfred P. Sloan Research Fellowship is also acknowledged.



\section{Existence of Fronts for General Reactions} \lb{S2}

In this section we will prove Theorem \ref{T.1.1}(i) by finding a front moving to the right. Let us assume that the period of $q,A$ in $x_1$ is $p=1$ and thus the unit cell of periodicity is $\calC=\bbT^d$ (the general case is treated identically). We will  assume without loss that $\zeta>f_1'(0)$ and that there is $\sigma\in(0,\zeta-f_1'(0))$ such that $f$ $\zeta'$-majorizes $g$ for each $\zeta'>\zeta-\sigma$. This can be done because if $\zeta'>\zeta$, then $f$ also $\zeta'$-majorizes $g$, so we only need to change $\zeta$ to $(\max\{\zeta,f_1'(0)\}+\zeta_0)/2$ and pick $\sigma\equiv (\zeta_0-\max\{\zeta,f_1'(0)\})/4$.  The first assumption implies that $\inf$ can be replaced by $\min$  and $\ge$ by $=$ in \eqref{1.3a}, and $\alpha_f(x)$ is uniformly bounded away from 0 and 1 (see \eqref{2.8}). The second guarantees that if a subsequence of reactions $f_n$ which satisfy (H1)  converges locally uniformly to $f$ (each such sequence has a locally uniformly convergent subsequence) and each $f_n$  $\zeta'$-majorizes $g$ for each $\zeta'>\zeta-\sigma$, then this $f$ not only satisfies (H1) but also $\zeta'$-majorizes $g$ for each $\zeta'>\zeta-\sigma$. This claim would not be true with $\ge$  in place of $>$. We note that all constants in this section will depend on $q,A,f_0,f_1,\zeta,g,K$ (also on $\zeta_0,\sigma,c_0$ which already only depend on $q,A,f_0,\zeta$)  {\it but not on} $f$.

As in the one-dimensional case \cite{MRS,NolRyz,Shen}, we will look for the front as a limit of solutions with initial data specified at increasingly negative times and supported further and further to the left. These solutions will be monotonically increasing in time. We will therefore need

\begin{lemma} \lb{L.2.1}
For each $\til\tht\in(\tht,1)$ there exists a function $v(x)$ supported in $(-\infty,0)\times\bbT^{d-1}\subseteq D$ with $v(x) = \til\tht$ for all small enough $x_1$ such that
\beq \lb{2.1}
- \divg(A(x)\nabla v) + q(x)\cdot\nabla v \le  f_0(v)
\eeq
in the sense of distributions.
\end{lemma}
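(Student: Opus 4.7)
The plan is to construct $v$ as a one-dimensional profile $v(x) = v_0(x_1)$, pieced together from four regions: a plateau $v_0 \equiv \tilde\theta$ on $(-\infty, -L]$, a smooth concave decreasing transition on $[-L, x_*]$, an exponential decay on $[x_*, x^{**}]$, and $v_0 \equiv 0$ on $[x^{**}, \infty)$ with $x^{**} < 0$. For such $v$, one computes
\[
-\divg(A\nabla v) + q\cdot\nabla v = -A_{11}(x)v_0''(x_1) + B(x)v_0'(x_1),
\]
where $B(x) := q_1(x) - \sum_i \partial_i A_{i1}(x)$ is bounded; in particular, the distributional subsolution inequality must hold pointwise in $x \in D$, i.e.\ uniformly in $x_\perp \in \bbT^{d-1}$ at each $x_1$.

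Explicitly, fix $\delta \in (0, \tilde\theta - \theta)$, $\mu \ge \bar B/\underline A$ with $\bar B := \sup_x |B(x)|$, $\epsilon \in (0,1)$, and $C := (\theta+\delta)/(1-\epsilon)$. On $[x_*, x^{**}]$ set $v_0(x_1) := C(e^{-\mu(x_1-x_*)} - \epsilon)$ with $x^{**} := x_* + \mu^{-1}\log\epsilon^{-1}$, so that $v_0(x_*) = \theta+\delta$, $v_0'(x_*) = -C\mu$, and $v_0(x^{**}) = 0$. On $[-L, x_*]$ take a smooth monotone-decreasing profile (e.g., a piecewise polynomial Hermite interpolant) with matched boundary data $v_0(-L) = \tilde\theta$, $v_0'(-L) = 0$, $v_0(x_*) = \theta+\delta$, $v_0'(x_*) = -C\mu$, chosen so that $\|v_0''\|_\infty = O(1/\ell)$ where $\ell := x_* + L$ can be made large. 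Outside $[-L, x^{**}]$, $v_0$ is constant; adjusting $L$ ensures $x^{**} < 0$ and the support condition.

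The profile $v_0$ is $C^1$ at $x_1 = -L$ and $x_1 = x_*$ by construction, so no Dirac contribution arises at those interfaces. At $x_1 = x^{**}$, $v_0'$ has a positive jump (from $-C\mu\epsilon$ to $0$), producing a positive Dirac in $v_0''$ and hence a negative Dirac in $-A_{11}v_0''$, which is compatible with the subsolution inequality in distributions. In the exponential piece, direct computation gives
\[
-A_{11}v_0'' + B v_0' = -C\mu e^{-\mu(x_1-x_*)}\bigl(A_{11}\mu + B\bigr) \le 0 \le f_0(v_0),
\]
using $A_{11}\mu + B \ge \underline A\mu - \bar B \ge 0$. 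In the upper transition $v_0 \in [\theta+\delta, \tilde\theta]$, so $f_0(v_0) \ge f_0^{\min}(\delta) := \inf_{[\theta+\delta, \tilde\theta]} f_0 > 0$; the LHS is bounded by $\bar A\|v_0''\|_\infty + \bar B\|v_0'\|_\infty = O(1/\ell) + \bar B C\mu$, and taking $\ell$ large together with $\delta$ chosen so that $\bar B C\mu \le \tfrac12 f_0^{\min}(\delta)$ closes the estimate. The constant regions trivially satisfy $0 \le f_0(v_0)$.

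The hardest step is precisely the upper-transition verification, which requires $\bar B C\mu \lesssim f_0^{\min}(\delta)$. Since $C\mu \to \bar B\theta/\underline A$ (a fixed positive constant depending on the geometry of $A,q$) while $f_0^{\min}(\delta) \to 0$ as $\delta \to 0$ (because $f_0(\theta) = 0$), the direct piecewise construction succeeds only when $\tilde\theta$ is sufficiently larger than $\theta$. For $\tilde\theta$ close to $\theta$, I would replace the upper transition by a pulsating-wave-based profile: using the pulsating front $\bar W$ for a modified ignition reaction $g \le f_0$ with upper equilibrium $\tilde\theta$ (obtained from \cite{BH} by rescaling $u \mapsto u/\tilde\theta$), one gets a strict subsolution $\bar V(x) := \bar W(x_1, x) \le \tilde\theta$ for $f_0$ with a strictness margin $\bar c|\bar W_s| > 0$; this margin can then be used to absorb the matching error at a $C^1$-buffered plateau boundary, completing the construction for all $\tilde\theta \in (\theta, 1)$.
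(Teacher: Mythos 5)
Your overall strategy --- a one--dimensional monotone profile glued from a plateau, a transition layer, and an exponential tail --- is genuinely different from the paper's construction, and it has a gap that is not repaired by your own fallback. The quantitative problem is in the upper transition. You impose the $C^1$ matching $v_0'(x_*)=-C\mu$ with $\mu\ge \bar B/\underline A$ and $C\ge\tht$, so $|v_0'(x_*)|\ge \tht\bar B/\underline A$ is a \emph{fixed} positive constant (whenever $\bar B>0$). A monotone profile on an interval of length $\ell$ with endpoint slope $-C\mu$ and $\|v_0''\|_\infty\le M$ satisfies $|v_0'|\ge C\mu/2$ on a subinterval of length $\min\{\ell, C\mu/2M\}$, so its total drop is at least $\tfrac{C\mu}{2}\min\{\ell,C\mu/2M\}$; since the drop is at most $\til\tht-\tht-\delta<1$, this forces $M\gtrsim (C\mu)^2$, a constant independent of $\ell$. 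Hence the claim $\|v_0''\|_\infty=O(1/\ell)$ is false, and the estimate $\bar A\|v_0''\|_\infty+\bar B\|v_0'\|_\infty=O(1/\ell)+\bar BC\mu$ does not close. Even ignoring the second-derivative term, the condition $\bar B C\mu\le\tfrac12 f_0^{\min}(\delta)$ you need is a smallness condition of the drift relative to the reaction ($C\mu\ge\tht\bar B/\underline A$, while $f_0^{\min}(\delta)\le\sup f_0$), which fails for general $q,A,f_0$ satisfying (H1)--(H2) --- not only for $\til\tht$ close to $\tht$ as you suggest. The fallback via pulsating fronts of \cite{BH} is only a sketch: the strictness margin $c|\bar W_s|$ degenerates as $x_1\to-\infty$, exactly where you must glue to the exact plateau $v\equiv\til\tht$, and truncating $\bar W$ on the right to obtain the required compact support in $(-\infty,0)$ needs a separate argument; it also imports machinery the paper elsewhere takes pains to avoid.

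The idea you are missing is the paper's corrector trick, which eliminates the troublesome first-order term altogether. One solves the periodic cell problem $-\divg(A\nabla\til v)+q\cdot\nabla\til v=q_1-\divg(Ae_1)$ on $\bbT^d$ (solvable since the right-hand side has mean zero), so that $v_\eps(x)\equiv\eps(\til v(x)-x_1)$ satisfies $-\divg(A\nabla v_\eps)+q\cdot\nabla v_\eps=0$ \emph{exactly}. Composing with a profile $\rho$ that is linear on $[0,\tfrac{\tht+\til\tht}2]$ and concave on $[\tfrac{\tht+\til\tht}2,1]$, the chain rule leaves only $-\eps^2\rho''(v_\eps)(\nabla\til v-e_1)\cdot A(\nabla\til v-e_1)$ plus a harmless negative Dirac at the support boundary. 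The positive part of this error is $O(\eps^2)$ and is supported exactly where $v\in[\tfrac{\tht+\til\tht}2,\til\tht]$, where $f_0$ is uniformly positive; taking $\eps$ small closes the argument with no quantitative comparison between drift and reaction, for every $\til\tht\in(\tht,1)$. I suggest you redo the proof along these lines.
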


\begin{proof}
Take a nondecreasing function $\rho\in C(\bbR)\cup C^2(\bbR^+)$ with $\rho(v)=0$ for $v\le 0$,  $\rho(v)=v$ for $v\in[0,\tfrac {\tht+\til\tht}2]$, $\rho''(v)\le 0$ for $v\in [\tfrac {\tht+\til\tht}2,1]$, and $\rho(v)=\til\tht$ for  $v\ge 1$. 

Let $\til v<0$ be a $C^2$ solution of $-\divg(A\nabla \til v)+q\cdot \nabla\til v = q_1-\divg(Ae_1)$ on $\bbT^d$, periodically continued to $D$ (here  $e_1=(1,0,\dots,0)$ and $q_1= q\cdot e_1$). Such $\til v$ exists because the integral of the right hand side is zero, and because the left hand side annihilates constants. Then we let $v_\eps(x)\equiv \eps(\til v(x)-x_1)$ and $v(x)\equiv \rho(v_\eps(x))$ for $\eps>0$, so that $v$ is supported in $(-\infty,0)\times\bbT^{d-1}$. We have $-\divg(A\nabla v_\eps)+q\cdot \nabla v_\eps =0$ and so for some distribution $T\ge 0$ supported on the set $D_\eps\equiv\{x\in D\,|\, v_\eps(x)=0\}$,
\[
-\divg(A\nabla  v)+q\cdot \nabla v = -\eps^2 \chi_{D\setminus D_\eps} \rho''(v_\eps) (\nabla \til v-e_1) \cdot A (\nabla \til v-e_1)  - T.
\]
If $\rho''(v_\eps(x)) < 0$, then $v(x)=\rho(v_\eps(x))\in [\tfrac {\tht+\til\tht}2,\til\tht]$. Since $f_0$ is uniformly positive on this interval and $A$ is a positive matrix, \eqref{2.1} follows provided $\eps>0$ is small enough.
\end{proof}

We now fix $\til\tht<1$ to be close to 1 so that $\til\tht>\tht_0$ from \eqref{2.8} below (in particular, $\til\tht>\tht$) and consider the corresponding $v$ along with the functions $v_n(x)\equiv v(x+ne_1)$. For all $n\in\bbN$ let $u_n$ solve \eqref{1.1} for $t>\tau_n$ with initial condition $u_n(\tau_n,x)=v_n(x)$, where $\tau_n\to -\infty$ will be chosen shortly. We then have

\begin{lemma} \lb{L.2.2}
The functions $u_n$ satisfy for all $t> \tau_n$ and $x\in D$
\beq \lb{2.2} 
(u_n)_t (t,x)> 0
\eeq
as well as 
\beq \lb{2.3} 
\lim_{t\to\infty} u_n(t,x)=1
\eeq
locally uniformly in $x\in D$.
\end{lemma}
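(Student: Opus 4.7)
The plan is to prove the two claims in sequence, with (2.2) essentially a monotonicity-in-time argument and (2.3) a spreading argument reducing to the ignition reaction $f_0$.

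For (2.2), I would first verify that $v_n$ is a stationary subsolution of the full PDE \eqref{1.1}: applying \eqref{2.1} at the translated argument $x+ne_1$ and using periodicity of $q$ and $A$ together with $f_0(u)\le f(x,u)$ gives $-\divg(A\nabla v_n)+q\cdot\nabla v_n\le f_0(v_n)\le f(x,v_n)$ in $\mathcal{D}'(D)$. The parabolic comparison principle then yields $u_n(t,x)\ge v_n(x)$ for all $t>\tau_n$. In particular, $u_n(\tau_n+h,\cdot)\ge v_n=u_n(\tau_n,\cdot)$ for every $h>0$, and since $\tilde u(t,x):=u_n(t+h,x)$ solves the same equation as $u_n$, a second comparison gives $u_n(t+h,x)\ge u_n(t,x)$ for every $t\ge\tau_n$. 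Hence $(u_n)_t\ge 0$. To upgrade to strict positivity, I would note that $(u_n)_t$ satisfies a linear parabolic equation with coefficients bounded in terms of $K$ (obtained by differentiating \eqref{1.1} in $t$, or passing to the limit on difference quotients), so the strong maximum principle forces either $(u_n)_t>0$ or $(u_n)_t\equiv 0$ on a time interval starting at $\tau_n$. The latter would mean $u_n(t,\cdot)\equiv v_n$ for a time, contradicting the fact that the inequality \eqref{2.1} is \emph{strict} on the open region where $v_\eps\in(\tht,\tilde\tht)$ (visible from the formula in the proof of Lemma~\ref{L.2.1}, where $\rho''(v_\eps)<0$).

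For (2.3), monotonicity plus the bound $u_n\le 1$ (by comparison with the constant $1$) gives a pointwise limit $U_n(x)\le 1$. Parabolic Schauder estimates upgrade the convergence to $C^2_{\loc}$, so $U_n$ is a bounded stationary solution of \eqref{1.1} with $U_n\ge v_n$, and in particular $U_n\equiv\tilde\tht$ on the half-infinite region where $v_n=\tilde\tht$. The remaining task, and the main obstacle, is to show $U_n\equiv 1$ (equivalently, locally uniform convergence). I would sidestep any dependence on $f$ by comparison: since $f(x,u)\ge f_0(u)$, the solution $\bar u_n$ of \eqref{1.1} with $f_0$ in place of $f$ and the same initial datum $v_n$ satisfies $u_n\ge\bar u_n$. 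The reaction $f_0$ is $x$-independent and ignition type with $q,A$ periodic, which is exactly the setting covered by the spreading result used later (Lemma~\ref{L.5.1}): because $\tilde\tht>\tht$ and $v_n=\tilde\tht$ on an unbounded set in $x_1$, any threshold $L_\nu$ required there is trivially exceeded, so $\bar u_n(t,x)\to 1$ locally uniformly as $t\to\infty$. This forces $u_n(t,x)\to 1$ locally uniformly.

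The main delicate point is the strict monotonicity, because one must know that $v$ is not \emph{everywhere} a stationary solution; this is the reason the construction in Lemma~\ref{L.2.1} uses a concave cutoff $\rho$ with $\rho''<0$ on a non-trivial subinterval, producing the negative distribution $-\eps^2\rho''(v_\eps)(\nabla\til v-e_1)\cdot A(\nabla\til v-e_1)$ that makes the subsolution inequality strict. The second potential subtlety — circularity with Lemma~\ref{L.5.1} — is harmless, since that lemma concerns only the $x$-independent reaction $f_0$ with the periodic coefficients $q,A$, a setup treated independently of the present analysis.
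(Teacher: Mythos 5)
Your proof of \eqref{2.2} is essentially the paper's argument: the author likewise observes that $\dot u_n(\tau_n,\cdot)\ge 0$ follows from \eqref{2.1} and the periodicity of $q,A$, that $\dot u_n\not\equiv 0$, and then applies the strong maximum principle to the linear parabolic equation satisfied by $\dot u_n$. Your time-shift comparison is a fine (slightly longer) way to get $\dot u_n\ge 0$, and your reason why $\dot u_n\not\equiv 0$ is right in substance, though the strictness of \eqref{2.1} on the region where $v>\tht$ really comes from $f_0(v)>0$ there while the left-hand side is $O(\eps^2)$ (or zero), not from $\rho''<0$ per se.

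For \eqref{2.3} you take a genuinely different route. The paper sets $\til u(x)\equiv\lim_{t\to\infty}u_n(t,x)$ (well defined by monotonicity), notes that $\til u$ solves the stationary equation with right-hand side $r(x)\til u$ for the bounded non-negative $r(x)=f(x,\til u(x))/\til u(x)$, and invokes the Liouville-type Lemma \ref{L.2.3}(ii) to conclude that $\til u$ is a constant, necessarily $1$ since $\|\til u\|_\infty\ge\til\tht>\tht$ and $f_0>0$ on $(\tht,1)$. You instead compare with the solution of the $f_0$-equation and invoke a spreading result. This works \emph{only if} the spreading result for the homogeneous ignition reaction is taken from an external source (Xin's Proposition 3.4, which the paper cites for Lemma \ref{L.2.6}(ii)); you cannot lean on Lemma \ref{L.5.1} as you do, because its proof in the Appendix explicitly applies Lemma \ref{L.2.2} to a solution of \eqref{5.1} --- i.e.\ it uses the very statement being proved (in the special case $f=f_0$, which is not established anywhere else first). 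So your dismissal of the circularity as ``harmless'' is too quick as written, although the gap is repairable either by citing Xin or by first running the paper's Liouville argument in the case $f=f_0$. The trade-off: the Liouville route keeps the proof self-contained (cf.\ Remark 2 after Theorem \ref{T.1.1}, where the author stresses this independence), while your route imports the spreading theory as a black box and additionally needs $\til\tht\ge(1+\tht)/2$ (true here, since $\til\tht$ is chosen close to $1$) to fit the hypothesis of Lemma \ref{L.2.6}(ii).
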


\begin{proof}
The time derivative $\dot u_n\equiv (u_n)_t$ satisfies $\dot u_n(\tau_n,x)\ge 0$  due to \eqref{2.1}  and periodicity of $q,A$, and it is not identically 0. Since $\dot u_n$ satisfies $(\dot u_n)_t + q\cdot\nabla \dot u_n = \divg(A\nabla  \dot u_n) + \tfrac{\partial f}{\partial u}(x,u_n)\dot u_n$ and $\tfrac{\partial f}{\partial u}$ is bounded, the strong maximum principle shows \eqref{2.2}.

This means that for each $n$  the function $\til u(x)\equiv \lim_{t\to\infty} u_n(t,x)$ is well defined and satisfies $\til u(x)\in(0,1]$  and 
\[
-\divg(A\nabla \til u ) + q\cdot\nabla \til u =  f(x,\til u).
\]
Lemma \ref{L.2.3}(ii) below now shows that $\til u$ is a constant, which is then 1 due to  $\|\til u\|_\infty\ge \til\tht>\tht$. Parabolic regularity then shows the limit to be locally uniform in $D$.
\end{proof}

We now choose $\tau_n<0$ to be the unique time such that
\beq \lb{2.4} 
u_n(0,0)=\tht.
\eeq
Note that $\tau_n\to -\infty$ (see the remark after Lemma \ref{L.2.4}).


Despite its apparent simplicity, we were not able to locate the following Liouville-type result in the literature.

\begin{lemma} \lb{L.2.3}
(i) Let $q,A$ be as in (H2) but without the assumptions of periodicity and $q_1$ being mean-zero. If the function $u$ is bounded on $D$ and  satisfies
\beq \lb{2.5} 
-\divg(A(x)\nabla  u )+ q(x)\cdot\nabla u =  0,
\eeq
then $u$ is constant.

(ii) Let $q,A$ be as in (H2) but without the assumption of periodicity, and let $r$ be a bounded non-negative function on $D$. If $u$ is bounded and non-negative on $D$ and satisfies
\beq \lb{2.6} 
-\divg(A(x)\nabla  u) + q(x)\cdot\nabla u =r(x)u,
\eeq
then $u$ is constant.
\end{lemma}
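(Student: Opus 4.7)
The plan is to prove (ii); statement (i) is the special case $r\equiv 0$. The strategy combines the strong maximum principle for the operator $L$ (which has no zeroth-order term) with a translation-compactness argument in the unbounded $x_1$-direction of $D=\bbR\times\bbT^{d-1}$.

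Since $Lu=ru\ge 0$, the function $u$ is a non-negative supersolution of $Lv=0$, and the classical strong minimum principle of Hopf applies: if $u$ attains its infimum $m:=\inf_D u\ge 0$ at an interior point of $D$, then $u\equiv m$ on the connected domain $D$. Substituting back into the equation gives $rm\equiv 0$, so either $m=0$ (whence $u\equiv 0$) or $r\equiv 0$; in both cases $u$ is constant as claimed.

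If $m$ is not attained, I extract a translation limit. Choose $x_n=(x_n^{(1)},x_n')\in D$ with $u(x_n)\to m$; since $m$ is not attained and $\bbT^{d-1}$ is compact, $|x_n^{(1)}|\to\infty$, and WLOG $x_n^{(1)}\to +\infty$. The translates $u_n(x):=u(x+x_n^{(1)}e_1)$ are uniformly bounded and, by elliptic regularity (Schauder estimates using $A\in C^{1,1}$ and $q$ Lipschitz), precompact in $C^1_{\loc}(D)$; likewise the translated coefficients $A_n,q_n,r_n$ are uniformly bounded and precompact in appropriate topologies. Extracting subsequences produces a bounded non-negative limit $u^*$ on $D$ solving a limit equation $L^*u^*=r^*u^*$ of the same structural type, and with $x_n'\to x'^*\in\bbT^{d-1}$, $u^*(0,x'^*)=m$, so $u^*$ attains its infimum at the interior point $(0,x'^*)$. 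The previous paragraph applied to $u^*$ gives $u^*\equiv m$, which unwinds to the statement that $u$ is uniformly close to $m$ on arbitrarily wide slabs $[x_n^{(1)}-R,x_n^{(1)}+R]\times\bbT^{d-1}$.

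The same analysis at $x_1\to-\infty$ produces a second constant tail-limit $m_-\ge m$, and the global conclusion $u\equiv m$ follows from a flux identity combined with a maximum-principle argument. Specifically, integrating $Lu=ru$ over $K_{[A,B]}=[A,B]\times\bbT^{d-1}$ and using $\divg q=0$ to handle the drift term gives $\Psi(A)-\Psi(B)=\int_{K_{[A,B]}}ru\,dx$, where $\Psi(a):=\int_{\{a\}\times\bbT^{d-1}}(A\nabla u-uq)_1\,dx'$. Passing $A\to-\infty$, $B\to+\infty$ (noting that $u$ converges to the tail constants and $\nabla u\to 0$ by gradient estimates there) yields $\int_D ru=(m-m_-)Q$, where $Q:=\int_{\{a\}\times\bbT^{d-1}}q_1\,dx'$ is constant in $a$ (again by $\divg q=0$). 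Combined with the non-negativity of $\int_D ru$ and the analogous identity obtained by reversing the translation direction, this forces either $Q=0$ (whence $\int_D ru=0$, so $r\equiv 0$ on $\{u>0\}$ and $u$ becomes a bounded solution of $Lu=0$, constant by the second paragraph applied to both $u$ and $\sup u - u$) or $m_-=m$ (whence a weak maximum-principle argument on $[-N,N]\times\bbT^{d-1}$, with boundary values of $u$ close to $m$ on both cross-sections, directly gives $u\equiv m$). The main obstacle is this careful book-keeping of the tail limits and the application of the appropriate maximum principle on bounded cylindrical subdomains.
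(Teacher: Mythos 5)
Your overall scheme (strong minimum principle plus a flux identity obtained by integrating the equation over the cylinder) is close in spirit to the paper's, but there is a genuine gap at the step you rely on most heavily: the existence of constant tail limits of $u$ as $x_1\to\pm\infty$. Your translation--compactness argument only controls $u$ near a minimizing sequence $x_n$. It shows that $u$ is close to $m=\inf_D u$ on wide slabs around the particular points $x_n^{(1)}$, but not that $u(x)\to m$ as $x_1\to+\infty$ (between consecutive slabs $u$ could a priori be far from $m$), and it gives nothing at all at the opposite end of the cylinder: if the translates along a sequence $y_n^{(1)}\to-\infty$ converge to some $u^{**}$, that limit need not attain its infimum, so the Hopf argument does not apply to it and the claimed ``second constant tail-limit $m_-$'' is unsupported. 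Since your identity $\Psi(A)-\Psi(B)=\int_{K_{[A,B]}}ru\,dx$ can only be passed to the limit if $u$ converges uniformly on the cross-sections and $\nabla u\to 0$ there, the whole final step is not justified as written. The paper closes exactly this hole first: it observes that $\til u(x_1)\equiv\min_{x'\in\bbT^{d-1}}u(x_1,x')$ cannot have an interior local minimum (weak minimum principle on slabs, using that $u$ is a supersolution of the zeroth-order-free operator), so $\til u$ is unimodal and the limits $l_1,l_2$ at $\mp\infty$ exist; the Harnack inequality on overlapping unit cells then upgrades this to uniform convergence of $u$ in $x'$, and elliptic regularity gives $\nabla u\to 0$. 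Only after that does it integrate the equation (and the equation multiplied by $u$) over $D$.

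Two smaller points. First, your reduction ``(i) is the special case $r\equiv 0$'' glosses over the fact that (i) also drops the mean-zero hypothesis on $q_1$ (and the sign condition on $u$, which is harmless since one may add a constant); your later case analysis does treat $Q\neq 0$, so this is presentational rather than fatal, but it should be stated. Second, once the tail limits are in hand, the cleanest way to conclude in the case $l_1=l_2$ (or $Q=0$) is the paper's energy identity $\int_D\nabla u\cdot A\nabla u=\tfrac12(l_2^2-l_1^2)\bar q_1$, obtained by multiplying by $u$ and integrating; this avoids the extra maximum-principle argument on $[-N,N]\times\bbT^{d-1}$ that you sketch.
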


{\it Remark.} Note that if $q$ is incompressible on $D$, then its mean $\bar q\equiv \int_{\bbT^{d-1}} q(x_1,x')\,dx'$ is independent of $x_1$. Thus $\int_{[0,p]\times\bbT^{d-1}} q_1(x)\, dx=0$ is satisfied in (ii) for either all $p>0$ or none.

\begin{proof}
Let $u$ satisfy \eqref{2.5} or \eqref{2.6}. Then $\til u(x_1)\equiv \min_{x'\in \bbT^{d-1}} u(x_1,x')$ cannot have a local minimum by the maximum principle unless it is constant. In either case the limits 
\[
\lim_{x_1\to-\infty} \til u(x_1)=l_1 \qquad\text{and}\qquad \lim_{x_1\to\infty} \til u(x_1)=l_2
\]
exist. Harnack inequality for the domains $(y,y+1)\times\bbT^{d-1} \subset (y-1,y+2)\times\bbT^{d-1}$ with $y\to\pm\infty$ (see, e.g., \cite[p.~199]{GT}) now shows that uniformly in $x'=(x_2,\dots,x_d)$,
\[
\lim_{x_1\to-\infty} u(x)=l_1 \qquad\text{and}\qquad  \lim_{x_1\to\infty} u(x)=l_2.
\]
Parabolic regularity shows that $\lim_{x_1\to\pm\infty} \nabla u(x)=0$ uniformly in $x'$.

If $\bar q_1= 0$, integrate \eqref{2.6} over $D$ to get
\[
0=\int_D\divg(A\nabla  u) = \int_D \divg (qu) - \int_D ru = (l_2-l_1)\bar q_1 - \int_D ru = - \int_D ru.
\]
Thus $ru\equiv 0$ 
and \eqref{2.6} becomes \eqref{2.5}. We multiply \eqref{2.5} by $u$ and integrate over $D$ to get
\beq \lb{2.7}
-\int_D \nabla u \cdot A \nabla u = \int_D u \divg(A\nabla  u) = \frac 12 \int_D \divg (qu^2) = \frac 12 (l_2^2-l_1^2)\bar q_1 = 0.
\eeq
Thus $u$ must be constant, proving (ii) and the case $\bar q_1=0$ in (i).

If $\bar q_1\neq 0$ in (i), integrate \eqref{2.5} over $D$ to get
\[
0=\int_D \divg(A\nabla  u) = \int_D \divg (qu) = (l_2-l_1) \bar q_1.
\]
Thus $l_2=l_1$ and \eqref{2.7} finishes the proof of this case.
\end{proof}

We will now recover the transition front $w$ as a limit of the $u_n$ along a subsequence as $n\to\infty$. Such a limit always exists by parabolic regularity and satisfies $w(0,0)=\tht$ due to \eqref{2.4}, but the main issue is to show that it is indeed a transition front for \eqref{1.1}. The following four lemmas will ensure this fact.

Let us take $\zeta$ from the statement of Theorem \ref{T.1.1}  and let $\tht_j$ ($j=0,1$) be the smallest positive number such that $f_j(\tht_j)=\zeta\tht_j$. Since $\zeta>f_1'(0)$, we have $0<\tht_1\le\tht_0<1$, $\tht<\tht_0$, and for each $x\in D$,
\beq \lb{2.8}
\alpha_f(x) \in [\tht_1,\tht_0].
\eeq
We now let 
\beq \lb{2.8c}
c_\zeta \equiv \min_{\lambda>0} \frac {\zeta+\kappa(\lambda)} {\lambda}
\eeq
with $\kappa(\lambda)$ and $\gamma(x;\lambda)>0$ the principal eigenvalue and eigenfunction for
\beq \lb{2.8a}
\divg(A\nabla \gamma) - q\cdot\nabla\gamma - \lambda (Ae_1+A^T e_1)\cdot\nabla\gamma + \lambda(\lambda e_1^TAe_1 - \divg(Ae_1) + q_1)\gamma = \kappa(\lambda)\gamma
\eeq
on $\bbT^d$ (for \eqref{1.0} we have $\gamma(x;\lambda)\equiv 1$ and  $\kappa(\lambda)=\lambda^2$). We note that the minimum  is achieved at some $\lambda_\zeta>0$ because $\kappa$ is a continuous function of $\lambda$ and the fraction in \eqref{2.8c} diverges to $\infty$ as $\lambda\to 0,\infty$. The latter follows from $\zeta>0$ and 
\[
\kappa(\lambda) \ge \underline A\lambda^2,
\]
which is obtained after dividing \eqref{2.8a} by $\gamma$ and integrating over $\bbT^d$:
\[
\kappa(\lambda) = \int_{\bbT^d} (\nabla \log \gamma - \lambda e_1)\cdot A (\nabla \log \gamma - \lambda e_1) \ge \underline A \int_{\bbT^d} |\nabla \log\gamma|^2  + \lambda^2 \ge \underline A\lambda^2.
\]
We note that $c_\zeta<c_0$ because $\zeta<\zeta_0$ and $\zeta_0$ was defined so that the right hand side of \eqref{2.8c} with $\zeta_0$ in place of $\zeta$ equals $c_0$. 

We continue $\gamma(x;\lambda_\zeta)$ periodically on $D$, and define 
\[
\Psi(s,x)\equiv \left[ \inf_D \gamma(x;\lambda_\zeta) \right]^{-1} e^{-\lambda_\zeta s} \gamma(x;\lambda_\zeta) >0
\] 
(for \eqref{1.0} this is $\Psi(s,x)=e^{-\lambda_\zeta s}$).
Notice that $\Psi(0,x)\ge 1$, and $\psi(t,x)\equiv\Psi(x_1-c_\zeta t,x)$ is an (exponentially growing as $x_1\to -\infty$) pulsating front  with speed $c_\zeta$ for \eqref{1.1} with $f$ replaced by $\zeta u$. In fact, \cite{BHN1} shows that $c_\zeta$ is also the minimal speed of a true pulsating front for \eqref{1.1} with any $x$-independent KPP reaction $\til f$ satisfying $\til f'(0)=\zeta$. This is why $c_\zeta<c_0$ will be a crucial component of our argument (and, in fact, any $\lambda$ such that the fraction in \eqref{2.8c} is smaller than $c_0$ would do in place of $\lambda_\zeta$).

We now let for each $n\in\bbN$ and $t\ge\tau_n$,
\[
X_n(t)\equiv \sup \{x_1\,|\, u_n(t,x)\ge\alpha_f(x) \text{ for some $x=(x_1,x')$}\}
\]
and
\[
Y_n(t)\equiv \inf \{y\,|\, u_n(t,x)\le\Psi(x_1-y,x) \text{ for all $x\in D$}\}.
\] 
Both these functions are non-decreasing because $u_n$ is increasing and  $\Psi$ decreasing in their respective first variables.  Continuity of $u_n(t,x)$, lower semi-continuity of $\alpha_f(x)$, and compactness of $\bbT^{d-1}$ imply that $X_n$ is continuous from the right.

\begin{lemma} \lb{L.2.4}
Let $\xi\equiv \sup_{u\in(0,1)} f_1(u)/u\ge\zeta$ and $c_\xi\equiv (\xi+\kappa(\lambda_\zeta))/\lambda_\zeta$. Then for any $n$ and $t\ge \tau\ge \tau_n$ we have
\beq\lb{2.11}
Y_n(t)-Y_n(\tau)\le c_\xi(t-\tau). 
\eeq
\end{lemma}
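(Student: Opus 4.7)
The plan is to construct an exponential barrier
\[
\psi(t,x) \equiv \Psi\bigl(x_1 - c_\xi(t-\tau) - Y_n(\tau),\, x\bigr), \qquad t \geq \tau,
\]
verify that it dominates $u_n$ at $t=\tau$ and is a supersolution of \eqref{1.1}, and then invoke the parabolic comparison principle. Once $u_n(t,\cdot) \leq \psi(t,\cdot)$ is established for $t\geq\tau$, the infimum definition of $Y_n(t)$ yields \eqref{2.11} at once, since $\psi(t,x)=\Psi(x_1-(Y_n(\tau)+c_\xi(t-\tau)),x)$.

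The initial bound $u_n(\tau,x)\leq \psi(\tau,x)=\Psi(x_1-Y_n(\tau),x)$ will follow from the defining infimum of $Y_n(\tau)$ together with continuity of $\Psi$ in its first argument. The heart of the argument is the supersolution inequality. Writing $\psi=Ce^{-\lambda_\zeta s}\gamma(x;\lambda_\zeta)$ with $s=x_1-c_\xi(t-\tau)-Y_n(\tau)$, so that $\partial_t s=-c_\xi$ and $\nabla s=e_1$, a direct chain-rule computation reduces $-\divg(A\nabla\psi)+q\cdot\nabla\psi$ to $\psi$ times the left-hand side of \eqref{2.8a} evaluated at $\lambda=\lambda_\zeta$, which by the eigenvalue identity equals $-\kappa(\lambda_\zeta)\psi$. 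Adding $\psi_t=c_\xi\lambda_\zeta\psi$ and using the defining relation $c_\xi\lambda_\zeta=\xi+\kappa(\lambda_\zeta)$ yields
\[
\psi_t+q\cdot\nabla\psi-\divg(A\nabla\psi)=\xi\psi.
\]
Since $f(x,u)\leq f_1(u)\leq \xi u$ for every $u\in[0,1]$ by the definition of $\xi$, and $u_n$ takes values in $[0,1]$, this makes $\psi$ a supersolution of \eqref{1.1} (after harmlessly extending $f(x,\cdot)$ by $0$ above $u=1$, which does not affect $u_n$).

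The classical parabolic comparison principle then delivers $u_n(t,x)\leq \psi(t,x)$ for all $t\geq\tau$ and $x\in D$, so the defining infimum of $Y_n(t)$ forces $Y_n(t)\leq Y_n(\tau)+c_\xi(t-\tau)$, which is \eqref{2.11}. The only real calculation is the eigenvalue-based identification of the parabolic growth rate of $\psi$; the specific value $c_\xi=(\xi+\kappa(\lambda_\zeta))/\lambda_\zeta$ is dictated precisely so that the moving-frame exponential profile $\Psi$ just barely offsets the worst-case linear rate $\xi$ of $f$ near zero. The comparison at spatial infinity is unproblematic since $\psi\to +\infty$ as $x_1\to-\infty$ and $u_n\leq 1$, while at $x_1\to +\infty$ both $u_n$ (by its construction from initial data supported in $(-\infty,-n)\times\bbT^{d-1}$ and parabolic regularity) and $\psi$ tend to $0$.
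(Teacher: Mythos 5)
Your proof is correct and is exactly the paper's argument: the paper likewise observes that $\phi(t,x)\equiv \Psi(x_1-Y_n(\tau)-c_\xi(t-\tau),x)$ solves \eqref{1.1} with $\xi u$ in place of $f$ (hence is a supersolution since $f(x,u)\le f_1(u)\le\xi u$) and concludes by comparison from the definition of $Y_n$. You have merely written out the chain-rule/eigenvalue computation that the paper calls immediate, and done so correctly.
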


{\it Remark.} Taking $\tau=\tau_n$ and $t=0$, this and \eqref{2.4} give $\tau_n\to -\infty$.

\begin{proof}
This is immediate from the definition of $Y_n(\tau)$ and the fact that he function $\phi(t,x)\equiv \Psi(x_1-Y_n(\tau)-c_\xi(t-\tau),x)$ is a supersolution of \eqref{1.1} for $t>\tau$ (it solves \eqref{1.1} with $\xi u$ in place of $f$).
\end{proof}

This and $Y_n(\tau_n)<\infty$ show that $Y_n(t)<\infty$ (then also $X_n(t)<\infty$ by \eqref{2.8}) and that $Y_n$ is continuous because it is non-decreasing. The following is a crucial step in our proof of existence of fronts.

\begin{lemma} \lb{L.2.5}
There is $C_1<\infty$  such that for all $n\in\bbN$ and all $t\ge \tau_n$,
\beq \lb{2.12}
|Y_n(t) - X_n(t)| \le C_1.
\eeq
\end{lemma}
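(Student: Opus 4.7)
I would split the bound $|Y_n(t)-X_n(t)|\le C_1$ into one-sided estimates. The easy side $X_n(t)-Y_n(t)\le C'$ comes from the following: lower semicontinuity of $\alpha_f$, continuity of $u_n(t,\cdot)$, and compactness of $\bbT^{d-1}$ yield a point $x^\sharp=(X_n(t),{x'}^\sharp)$ with $u_n(t,x^\sharp)\ge\alpha_f(x^\sharp)\ge\tht_1$ by \eqref{2.8}; combining this with $u_n(t,x^\sharp)\le\Psi(X_n(t)-Y_n(t),x^\sharp)\le(\sup\gamma/\inf\gamma)\,e^{-\lambda_\zeta(X_n(t)-Y_n(t))}$ gives $X_n(t)-Y_n(t)\le\lambda_\zeta^{-1}\log(\sup\gamma/(\tht_1\inf\gamma))=:C'$.

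The other side $Y_n(t)-X_n(t)\le C''$ rests on a differential inequality for $Y_n$ when the gap $g(t):=Y_n(t)-X_n(t)$ is positive. The key observation is that if $g(t)>0$ and $x^*$ realizes $u_n(t,x^*)=\Psi(x_1^*-Y_n(t),x^*)$, then $x_1^*>X_n(t)$: otherwise one would have $\Psi(x_1^*-Y_n,x^*)\ge\Psi(-g(t),x^*)\ge e^{\lambda_\zeta g(t)}>1\ge u_n(t,x^*)$, a contradiction. Hence $u_n(t,x^*)<\alpha_f(x^*)$ and $f(x^*,u_n(t,x^*))\le\zeta u_n(t,x^*)$. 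Since $\Psi(\cdot-Y_n(t),\cdot)-u_n(t,\cdot)$ attains its global minimum (equal to $0$) at $x^*$, we have $\nabla u_n(x^*)=\nabla\Psi(x^*)$ and $\divg(A\nabla(\Psi-u_n))(x^*)\ge 0$; together with the eigenfunction identity \eqref{2.8a} this yields
$$u_{n,t}(t,x^*)\le\kappa(\lambda_\zeta)u_n(t,x^*)+\zeta u_n(t,x^*)=\lambda_\zeta c_\zeta u_n(t,x^*).$$
Differentiating the identity $u_n(t,x^*(t))\equiv\Psi(x_1^*(t)-Y_n(t),x^*(t))$ in $t$ and exploiting $\nabla u_n=\nabla\Psi$ at $x^*$ produces $u_{n,t}(t,x^*)=\lambda_\zeta Y_n'(t)u_n(t,x^*)$, from which $Y_n'(t)\le c_\zeta$ on $\{g>0\}$ follows.

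The second ingredient is a uniform lower bound on $X_n$: since $f\ge f_0$ and (upon choosing $\til\tht$ close enough to $1$ in Lemma \ref{L.2.1}) the initial datum $v_n$ dominates a $(\tht+\nu_0)$-bump on an interval of length $2L_{\nu_0}$ located near $-n$ for some $\nu_0\in(0,1-\tht_0)$, Lemma \ref{L.5.1} combined with the known spreading rate $c_0$ of $f_0$-solutions yields $X_n(t)\ge X_n(s)+c_0(t-s)-C_2$ whenever $t-s\ge T_0$, with $c_0,C_2,T_0$ independent of $n$ and $s$. Direct computation also gives $g(\tau_n)\le C_0$ (both $X_n(\tau_n)$ and $Y_n(\tau_n)$ lie within a fixed distance of $-n$). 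For the final bookkeeping, if $g(t)>0$ let $s^*:=\sup\{s\in[\tau_n,t]:g(s)\le 0\}$ (and $s^*:=\tau_n$ if the set is empty). On $(s^*,t]$ the differential inequality integrates to $Y_n(t)-Y_n(s^*)\le c_\zeta(t-s^*)$; if $t-s^*\ge T_0$ the spreading estimate then gives $g(t)\le g(s^*)-(c_0-c_\zeta)(t-s^*)+C_2\le\max\{0,C_0\}+C_2$, while if $t-s^*<T_0$ Lemma \ref{L.2.4} and monotonicity of $X_n$ give $g(t)\le\max\{0,C_0\}+c_\xi T_0$. In either case $g(t)\le C''$.

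The main obstacle I anticipate is the rigorous derivation of $Y_n'(t)\le c_\zeta$ on $\{g>0\}$: the maximizer $x^*(t)$ may not be unique, and the supremum defining $Y_n(t)$ could conceivably only be approached as $x_1\to+\infty$. Both issues I would handle by working with the representation $Y_n(t)=\lambda_\zeta^{-1}\log\sup_x[u_n(t,x)\inf\gamma\cdot e^{\lambda_\zeta x_1}/\gamma(x)]$, invoking the $c_\xi$-Lipschitz continuity of $Y_n$ from Lemma \ref{L.2.4} to obtain differentiability a.e., applying a Danskin/envelope argument on that full-measure set, and using the uniform bound $u_n\le\Psi(\cdot-Y_n,\cdot)$ together with parabolic regularity to restrict maximizing sequences to a bounded $x_1$-region. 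The decisive structural input is $\zeta<\zeta_0$, equivalent to $c_\zeta<c_0$, which is precisely what makes $g$ strictly contract on $\{g>0\}$ and therefore remain bounded.
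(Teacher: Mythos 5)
Your overall strategy is the same as the paper's: show that $Y_n$ advances with speed at most $c_\zeta$ while it is ahead of $X_n$, that $X_n$ advances with speed at least roughly $c_0$ after a bounded delay, and conclude from $c_\zeta<c_0$. Your implementation of the first half differs technically (a pointwise touching/Danskin argument giving $Y_n'\le c_\zeta$ a.e.\ on $\{g>0\}$, versus the paper's comparison of $u_n$ with the moving supersolution $\Psi(x_1-Y_n(t_0)-c_\zeta(t-t_0),x)$ on the domain $\{x_1>\til Y_n(t)\}$); your version is workable but carries the attainment/differentiability issues you flag, all of which the comparison-principle route avoids for free.

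The genuine gap is in the second half. The estimate you actually need is the \emph{restarting} bound $X_n(t)\ge X_n(s)+c_0(t-s)-C_2$ for arbitrary $s\ge\tau_n$, and this does not follow from Lemma \ref{L.5.1} applied to the initial bump of $v_n$ near $-n$. That application only yields $X_n(t)\ge -n+c_0(t-\tau_n)-C$, a bound anchored at time $\tau_n$; since $f$ may be as large as $f_1$ in places, $X_n(s)$ can outrun $-n+c_0(s-\tau_n)$ by an amount growing linearly in $s$ (the only a priori upper bound on $X_n(s)$ is via $Y_n$ and Lemma \ref{L.2.4}, with speed $c_\xi$ which can greatly exceed $c_0$), so the anchored bound gives no control on $g(t)$ in your bookkeeping. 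To restart the spreading from $(s,X_n(s))$ you must upgrade the single-point information ``$u_n(s,x)\ge\alpha_f(x)$ for some $x$ with $x_1\approx X_n(s)$'' to a configuration from which Lemma \ref{L.5.1} applies; note $\alpha_f(x)\ge\tht_1$ only, and $\tht_1$ may lie below $\tht$, so this is not a $(\tht+\nu)$-bump for the $f_0$-dynamics. This upgrade is exactly Lemma \ref{L.2.6}(i) (giving \eqref{2.14}), whose proof is a nontrivial compactness/Liouville argument relying on $u_t\ge0$ and on the $\zeta$-majorization hypothesis ($f(x,\cdot)\ge g$ above $\alpha_f(x)$); that hypothesis has to enter somewhere, since without it the lemma fails (cf.\ Remark 3 after Theorem \ref{T.1.2}). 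Your proposal omits this step entirely, and it is the heart of the matter.
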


\begin{proof}
The bound $X_n(t) - Y_n(t) \le C_1$ for a large enough $C_1$ is obvious from the definitions of $X_n,Y_n$, and \eqref{2.8}, so let us show $Y_n(t) - X_n(t) \le C_1$. We let $C_1$ (to be chosen later) be larger than  $C_0\equiv \max\{ Y_n(\tau_n)-X_n(\tau_n),1\}$, the latter independent of $n$ and finite due to $v_n$ being supported in a half-strip, $\til\tht>\tht_0$, and \eqref{2.8}.  Let us assume that $Y_n(t_1) - X_n(t_1)>C_1$ for some $n$ and $t_1\ge \tau_n$, and let $t_0\equiv \max\{ t<t_1 \,|\, Y_n(t)-X_n(t)\le C_0\}\ge\tau_n$. The maximum exists and $Y_n(t_0)-X_n(t_0)= C_0$ because $Y_n$ is continuous and $X_n$ non-decreasing.

Let $\til Y_n(t)\equiv Y_n(t_0)+c_\zeta(t-t_0)$ and if $X_n(t)\ge \til Y_n(t)$ for some $t\in[t_0,t_1]$, let $t_2$ be the first such time (recall that $X_n$ is continuous from the right). Then $\psi(t,x)\equiv \Psi(x_1-\til Y_n(t),x)$ is a solution and $u_n(t,x)$  a  subsolution in $\til D\equiv \{(t,x) \in(t_0,t_2)\times D \,|\, x_1>\til Y_n(t)\}$ of \eqref{1.1} with $\zeta u$ in place of $f$. Moreover, $\psi(t_0,x)\ge u_n(t_0,x)$ for $x_1>\til Y_n(t_0)$ and $\psi(t,x)\ge u_n(t,x)$ for $t\in[t_0,t_2]$ and $x_1=\til Y_n(t)$ (because $\Psi(0,x)\ge 1$). Thus $\psi(t,x)\ge u_n(t,x)$ in $\til D$ by the comparison principle, meaning that $Y_n(t)\le \til Y_n(t)$ for $t\in[t_0,t_2]$. But then $Y_n(t_2)-X_n(t_2)\le 0<C_0$, a contradiction with the choice of $t_0$. Hence such time $t_2\in[t_0,t_1]$ does not exist and the above argument gives $Y_n(t)  \le  \til Y_n(t)$ for $t\in[t_0,t_1]$.

That is, $Y_n$ increases with average speed at most $c_\zeta$ on $[t_0,t_1]$. On the other hand, the next lemma shows that $X_n$ increases with average speed at least $c_0-\eps$ (for any $\eps>0$) after an initial time delay $t_\eps$ (in the sense of \eqref{2.14} below with $\tau=t_0$). Since $c_\zeta<c_0$, we can pick $\eps\equiv (c_0-c_\zeta)/2>0$ and obtain 
\[
Y_n(t)-X_n(t)\le Y_n(t_0)+c_\zeta(t-t_0) - X_n(t_0) - (c_0-\eps)(t-t_0 -t_\eps) = C_0+(c_0-\eps)t_\eps- (c_0-\eps-c_\zeta)(t-t_0)
\]
for $t\in [t_0,t_1]$. If we now let $C_1\equiv C_0+c_0 t_\eps$, then it follows that $Y_n(t_1)-X_n(t_1)<C_1$, a contradiction. Thus $Y_n(t) - X_n(t) \le C_1$ for a large enough $C_1$ and all $n$ and $t\ge\tau_n$.
\end{proof}

Recall that $c_0^->0$ is the speed of the unique left-moving front for \eqref{1.1} with reaction $f_0$.

\begin{lemma} \lb{L.2.6}
(i) For every $\eps>0$ there is $t_\eps<\infty$ such that if $u:[0,\infty)\times D\to [0,1]$ solves \eqref{1.1}  with $u_t\ge 0$ and    $u(0,\til x)\ge\alpha_f(\til x)$ for some $\til x\in D$, then for each $t\ge 0$ we have
\[
\inf \big\{ u(t+t_\eps,x) \,\big|\, x_1-\til x_1\in [-(c_0^- -\eps)t,(c_0-\eps)t] \big\} \ge 1-\eps.
\]

(ii) There is $L'>0$ such that for every $\eps>0$ there is $t'_\eps<\infty$ satisfying the following. If $u:[0,\infty)\times D\to [0,1]$ solves \eqref{1.1}  and  $\inf \big\{ u(0,x) \,\big|\, |x_1-\til x_1|\le L' \big\} \ge (1+\tht)/2$ for some $\til x\in D$, then for each $t\ge 0$ we have
\[
\inf \big\{ u(t+ t'_\eps,x) \,\big|\, x_1-\til x_1\in [-(c_0^- -\eps)t,(c_0-\eps)t] \big\} \ge 1-\eps.
\]
\end{lemma}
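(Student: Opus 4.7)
The plan is to prove (ii) first, essentially via Xin's construction, and then deduce (i) from (ii) by a point-to-slab filling step carried out via a compactness-Liouville argument. For (ii), since $f\ge f_0$ and $f_0$ is an $x$-independent ignition reaction, \eqref{1.1} with reaction $f_0$ admits right- and left-moving pulsating fronts of speeds $c_0$ and $c_0^-$. From these one constructs compactly supported subsolutions $\underline u^\pm$ of \eqref{1.1} with reaction $f_0$ (hence subsolutions for any $f\ge f_0$) that start with support inside the slab $\{|x_1-\til x_1|\le L'\}$, whose reaction zones advance rightward at speed $c_0-\eps$ and leftward at speed $c_0^--\eps$ respectively, and that converge to $1$ on those zones. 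Here $L'$ must be taken large enough for such a truncated subsolution with lower envelope $(1+\tht)/2$ to exist. The comparison principle then yields the stated bound with $t'_\eps$ depending only on $q,A,f_0,\eps$; this is essentially \cite{Xin}.

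For (i), it suffices to find a uniform $T_0<\infty$, depending only on $q,A,f_0,f_1,\zeta,g,K$, such that
\[
u(T_0,x)\ge (1+\tht)/2 \quad\text{whenever}\quad |x_1-\til x_1|\le L',
\]
for then (ii) applied to $u(T_0+\dott,\dott)$ yields (i) with $t_\eps:=T_0+t'_\eps$. I would establish the existence of such a $T_0$ by contradiction and compactness. Assume sequences $f_n$ (admissible, each $\zeta'$-majorizing $g$ for every $\zeta'>\zeta-\sigma$), solutions $u_n$ with $(u_n)_t\ge 0$, base points $\til x^n$, and points $x_n^*$ with $|x_{n,1}^*-\til x_1^n|\le L'$ violate the claim: $u_n(n,x_n^*)<(1+\tht)/2$. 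Using $p$-periodicity of $q,A$ in $x_1$, translate so that each $\til x^n$ lies in a fixed fundamental cell. Then pass to a subsequence (via the Lipschitz bound $K$ on $f_n$ and interior parabolic regularity for $u_n$) on which $f_n\to f_\infty$ locally uniformly, $\til x^n\to \til x^\infty$, $x_n^*-\til x^n\to y^*$ with $|y_1^*|\le L'$, and the time-shifted functions $\til u_n(t,x):=u_n(t+n,x)$ converge locally uniformly on $\bbR\times D$ to an entire solution $\til u^\infty$ of \eqref{1.1} with reaction $f_\infty$. By the opening paragraph of Section \ref{S2}, $f_\infty$ satisfies (H1) and still $\zeta'$-majorizes $g$ for every $\zeta'>\zeta-\sigma$.

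The limit $\til u^\infty$ is bounded, non-decreasing in $t$, and satisfies $\til u^\infty(t,\til x^\infty)\ge \alpha_*:=\lim_n \alpha_{f_n}(\til x^n)\ge \tht_1>0$ for all $t\in\bbR$ (passing to the limit in $\til u_n(t,\til x^n)\ge \alpha_{f_n}(\til x^n)$, valid for $t\ge -n$); moreover $\alpha_*\ge \alpha_{f_\infty}(\til x^\infty)$ by continuity of $f_\infty$ together with minimality in the definition of $\alpha_{f_\infty}$. Monotonicity in $t$ yields a bounded non-negative stationary limit $u^-(x):=\lim_{t\to -\infty}\til u^\infty(t,x)$ of \eqref{1.1} with reaction $f_\infty$, positive at $\til x^\infty$ and hence everywhere by the elliptic strong maximum principle. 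Lemma \ref{L.2.3}(ii) applied with $r:=f_\infty(\dott,u^-)/u^-$ (bounded since $f_\infty$ is Lipschitz with $f_\infty(x,0)=0$) then shows $u^-\equiv c$ is constant, and $c\ge\alpha_{f_\infty}(\til x^\infty)$. Stationarity forces $f_\infty(x,c)=0$ for every $x$, while $\zeta$-majorization of $f_\infty$ yields $f_\infty(\til x^\infty,c)\ge g(c)>0$ whenever $c\in(0,1)$; hence $c=1$ and $\til u^\infty\equiv 1$ by monotonicity, contradicting $\til u^\infty(0,\til x^\infty+y^*)\le (1+\tht)/2$ which is obtained as the limit of $u_n(n,x_n^*)$.

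The main obstacle is ensuring that the limit reaction $f_\infty$ retains $\zeta$-majorization strictly enough to rule out $c\in(0,1)$ in the Liouville step; this is precisely why the preamble of Section \ref{S2} strengthened the hypothesis to $\zeta'$-majorization for every $\zeta'>\zeta-\sigma$, a property preserved under locally uniform limits. A minor technical point is that each $u_n$ is defined only for $t\ge 0$, but the time shift $t\mapsto t-n$ pushes the initial slice to $-\infty$ in the limit, producing an entire solution $\til u^\infty$ via uniform-in-$n$ parabolic regularity.
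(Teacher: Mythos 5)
Your proposal is correct and follows essentially the same route as the paper: part (ii) is delegated to Xin's spreading result for the $x$-independent minorant $f_0$, and part (i) is reduced to (ii) by a compactness--contradiction argument whose Liouville step rests on Lemma \ref{L.2.3}(ii) together with the stability of $\zeta'$-majorization under locally uniform limits of the reactions. The only (harmless) variations are that you shift time by $n$ and send $t\to-\infty$ in an entire limit solution rather than sending $t\to+\infty$ in a limit solution on $[1,\infty)\times D$, and that you pass the threshold bound to the limit via $\lim_n\alpha_{f_n}(\til x^n)\ge\alpha_{f_\infty}(\til x^\infty)$ instead of the paper's intermediate step \eqref{2.15} with the relaxed threshold $\alpha'_f$.
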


{\it Remarks.} 1. Part (i) also shows that for $t\ge \tau\ge\tau_n$,
\beq \lb{2.14}
X_n(t)-X_n(\tau)\ge (c_0-\eps)(t-\tau -t_\eps).
\eeq
\smallskip

2. Of course, the constants $t_\eps,t_\eps'$ are independent of $f$.

\begin{proof}
 (ii) This is an immediate consequence of Proposition 3.4 in \cite{Xin} which proves the same result for $f(x,u)=f_0(u)$. In the appendix we will provide an alternative proof of this result (in fact, with $c_0-\eps,c_0^--\eps$ replaced by $c_0,c_0^-$), thus making our proof independent of \cite{Xin}.

(i) Consider $\sigma$  from the beginning of this section and let
\[
\alpha'_f(x)\equiv  \inf\{ u\in (0,1] \,|\, f(x,u)\ge (\zeta-\sigma/2) u \}.
\]
Then $\alpha'_f(x)\le\alpha_f(\til x)-\sigma\tht_1/4K$ for all  $x\in B_{\sigma\tht_1/4K}(\til x)$ by \eqref{2.8} (and $\zeta-\sigma/2>f_1'(0)$ shows that $\alpha'_f(x)$ is the minimum of the above set as well as uniformly bounded away from 0). So $u(0,\til x)\ge\alpha_f(\til x)$, $u_t\ge 0$, and parabolic regularity show that there is $\del>0$ (independent of $f,\til x$) such that 
\beq \lb{2.15}
u(1,x)\ge \alpha'_f(x) \qquad\text{for all $x\in B_{\del}(\til x)$.}
\eeq

Assume that (i) is false. Taking $L'$ from (ii) and using (ii), this means that for each $n\in\bbN$, there is a solution $w_n$ of \eqref{1.1} with some reaction $f_n$ satisfying all the hypotheses (in particular, $f_n$ $\zeta'$-majorizes $g$ for each $\zeta'>\zeta-\sigma$), such that $w_n(0,\til x^n)\ge \alpha_{f_n}(\til x^n)$, $(w_n)_t\ge 0$,  and  $w_n(n,x^n)< (1+\tht)/2$ for some $x^n$ with $ |x^n_1-\til x^n_1|\le L'$. After possible translation in $x_1$ it is sufficient to consider $\til x^n\in\bbT^d$, so we can assume $\til x^n\to\til x$ (otherwise we choose a subsequence). Then \eqref{2.15} gives $w_n(1,\til x)\ge \alpha'_{f_n}(\til x)$ for large $n$. 

By parabolic regularity, the functions $w_n$ are uniformly bounded in $C^{1,\eta;2,\eta}([1,\infty)\times D)$ for some $\eta>0$. Thus there is a subsequence (which we again denote $w_n$) converging in $C^{1;2}_{\rm loc}([1,\infty)\times D)$ to a solution $\til w\ge 0$ of \eqref{1.1} on $(1,\infty)\times D$ with some reaction $f=\lim_{n\to\infty} f_n$, satisfying $\til w_t\ge 0$. But then $w(x)\equiv \lim_{t\to\infty} \til w(t,x)$ exists and satisfies
\[
q\cdot\nabla w= \divg(A\nabla  w) + f(x,w).
\]
Lemma \ref{L.2.3} and boundedness of $f(x,u)/u$ show that $w$ is a constant. This constant is then 1 because $w(\til x)\ge \limsup_n \alpha'_{f_n}(\til x)$ and thus $f(\til x,w(\til x))\ge g(w(\til x))$ due to $f_n(\til x, u)\ge g(u)$ for $u\ge \alpha'_{f_n}(\til x)$. But $w_n(n,x^n)< (1+\tht)/2$ and $(w_n)_t\ge 0$ show for all $t\ge 1$,
\[
\|1-\til w(t,\cdot)\|_{L^\infty([\til x_1-L',\til x_1+L'])}\ge \frac{1-\tht}2>0.
\]
Parabolic regularity again shows that this contradicts $w\equiv 1$, thus finishing the proof.
\end{proof}

For $\eps>0$ we define
\begin{align*}
Z_{n,\eps}^-(t) & \equiv \sup \left\{ y \,\big |\, u_n(t,x)\ge 1-\eps \text{ when $x_1\le y$} \right\}, \\
Z_{n,\eps}^+(t) & \equiv \inf \left\{ y \,\big |\, u_n(t,x)\le \eps \text{ when $x_1\ge y$} \right\}.
\end{align*}
Clearly both are finite. The following will ensure a bounded width of the constructed transition front.

\begin{lemma} \lb{L.2.7}
For any  $\eps\in(0,\min \{c_0,c_0^-\})$ let $\til t_\eps\equiv t_\eps+C_1(\min \{c_0,c_0^-\} -\eps)^{-1}$ with $t_\eps$ from Lemma \ref{L.2.6}(i). Then there is $L_\eps<\infty$ such that for all $n$ and $t\ge\tau_n+\til t_\eps$,
\[
Z_{n,\eps}^+(t) - Z_{n,\eps}^-(t) \le L_\eps. 
\]
\end{lemma}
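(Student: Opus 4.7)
The plan is to bound $Z^+_{n,\eps}(t)$ above and $Z^-_{n,\eps}(t)$ below, both in terms of $Y_n(t)$. The upper bound is immediate: by the definition of $Y_n$ we have $u_n(t,x)\le\Psi(x_1-Y_n(t),x)$, and since $\Psi(s,x)\le[\inf_D\gamma(\cdot;\lambda_\zeta)]^{-1}\|\gamma(\cdot;\lambda_\zeta)\|_\infty e^{-\lambda_\zeta s}$ decays exponentially, there is $M_\eps$ (depending only on $\eps,\lambda_\zeta,\gamma$) with $u_n(t,x)\le\eps$ whenever $x_1\ge Y_n(t)+M_\eps$. Hence $Z^+_{n,\eps}(t)\le Y_n(t)+M_\eps$.

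For the lower bound, I would apply Lemma \ref{L.2.6}(i) at many times $s$ and union the resulting coverage. For each $s\in[\tau_n,t-\til t_\eps]$, upper semi-continuity of $u_n-\alpha_f$ together with compactness of $\bbT^{d-1}$ yields a point $\til x^s$ with $\til x^s_1=X_n(s)$ and $u_n(s,\til x^s)\ge\alpha_f(\til x^s)$. Lemma \ref{L.2.6}(i), applied at time $s$ with lapse $t-s\ge\til t_\eps$, then gives $u_n(t,x)\ge1-\eps$ for $x_1$ in the interval
\[
I(s)=\bigl[X_n(s)-(c_0^--\eps)(t-s-t_\eps),\,X_n(s)+(c_0-\eps)(t-s-t_\eps)\bigr].
\]
The defining choice of $\til t_\eps$ ensures $I(t-\til t_\eps)\supseteq[X_n(t-\til t_\eps)-C_1,X_n(t-\til t_\eps)+C_1]$. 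Moreover, since $v_n(\til x)=\til\tht\ge\alpha_f(\til x)$ for all sufficiently negative $\til x_1$, applying Lemma \ref{L.2.6}(i) at $\tau_n$ for every such $\til x$ and unioning in $\til x_1$ gives $u_n(t,x)\ge 1-\eps$ on a half-line $(-\infty,y_n+(c_0-\eps)(t-\tau_n-t_\eps)]$. The main task is then to verify that these pieces, together with $\{I(s)\}_{s\in[\tau_n,t-\til t_\eps]}$, assemble into a single connected set reaching up to $X_n(t-\til t_\eps)+C_1$.

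The principal obstacle is that $X_n$ may jump upward, so the endpoints of $I(s)$ also jump. This is overcome by the observation that continuity of $Y_n$ combined with $|Y_n-X_n|\le C_1$ from Lemma \ref{L.2.5} forces every jump to satisfy
\[
X_n(s_0)-\lim_{s\to s_0^-}X_n(s)\le(Y_n(s_0)+C_1)-(Y_n(s_0)-C_1)=2C_1,
\]
whereas for $s_0\le t-\til t_\eps$ the width of $I(s_0)$ equals $(c_0+c_0^--2\eps)(t-s_0-t_\eps)\ge 2C_1$, by the defining identity $\til t_\eps-t_\eps=C_1/(\min\{c_0,c_0^-\}-\eps)$ together with $c_0+c_0^--2\eps\ge 2(\min\{c_0,c_0^-\}-\eps)$. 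Hence consecutive intervals overlap across every jump, and an analogous comparison (after possibly enlarging $C_1$ in Lemma \ref{L.2.5}, which is harmless since its conclusion holds for any larger constant) shows $I(\tau_n)$ overlaps with the far-left half-line. This yields $Z^-_{n,\eps}(t)\ge X_n(t-\til t_\eps)+C_1\ge Y_n(t-\til t_\eps)\ge Y_n(t)-c_\xi\til t_\eps$, with the last two inequalities supplied by Lemmas \ref{L.2.5} and \ref{L.2.4}. Combined with the upper bound this yields the lemma with $L_\eps\equiv M_\eps+c_\xi\til t_\eps$.
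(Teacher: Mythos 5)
Your proof is correct and follows essentially the same route as the paper's: both rest on the observation that jumps of $X_n$ are at most $2C_1$ (via Lemma \ref{L.2.5} and continuity of $Y_n$), on Lemma \ref{L.2.6}(i) with the elapsed time $\til t_\eps$ chosen so that the spreading radius is at least $C_1$, and on Lemmas \ref{L.2.4}--\ref{L.2.5} for the upper bound through $Y_n$. The only cosmetic difference is that the paper uses $(u_n)_t>0$ to transport all ignited points to the single time $t-\til t_\eps$ and applies Lemma \ref{L.2.6}(i) there with a fixed elapsed time, whereas you apply it at each earlier time $s$ separately and chain the resulting intervals $I(s)$.
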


\begin{proof}
Notice that Lemma \ref{L.2.5} and continuity of $Y_n$ show that if $X_n$ has jumps, they cannot be larger than $2C_1$. This and $(u_n)_t>0$ mean that for each $t\ge \tau_n+\til t_\eps$ and any closed subinterval $I$ of $(-\infty,X_n(t-\til t_\eps)]$ of length $2C_1$ there is $x_1\in I$ and $x'\in\bbT^{d-1}$ with $u_n(t-\til t_\eps,x_1,x')\ge\alpha_f(x_1,x')$. Then Lemma \ref{L.2.6}(i) shows that $u_n(t,x)\ge 1-\eps$ whenever $x_1\le  X_n(t-\til t_\eps)+C_1$. On the other hand, Lemmas \ref{L.2.4} and \ref{L.2.5} show that $u_n(t,x)\le \eps$ whenever 
\[
x_1\ge  X_n(t-\til t_\eps)+C_1+c_\xi \til t_\eps+l_\eps,
\]
where $l_\eps$ is such that $\Psi(l_\eps,x)\le \eps$ for all $x\in D$. Thus $L_\eps\equiv c_\xi \til t_\eps+l_\eps$ works.
\end{proof}

Having Lemma \ref{L.2.7}, the proof of Theorem \ref{T.1.1} is now standard. Parabolic regularity shows that the functions $u_n$ are uniformly bounded in $C^{1,\eta;2,\eta}([\tau_n+1,\infty)\times D)$, so we can find a subsequence converging in $C^{1;2}_{\rm loc}(\bbR\times D)$ to a function $w$ on $\bbR\times D$ which then is also a solution of \eqref{1.1}. Moreover, \eqref{2.4} gives $w(0,0)=\tht$, which together with Lemma \ref{L.2.7}, \eqref{2.11}, and \eqref{2.12} ensures \eqref{1.2} as well as a bounded width of $w$. Thus $w$ is a transition front in the sense of Definition \ref{D.1.0}. The claim $w_t>0$ is immediate from \eqref{2.2} and the strong maximum principle for $w_t$. The exponential decay in Remark 1 follows from Lemma \ref{L.2.5}.

\section{Uniqueness of Fronts for Ignition Reactions} \lb{S3}

We will now prove Theorem \ref{T.1.1}(ii), again assuming that the period of $q$ in $x_1$ is $p=1$. Since now $f_1'(0)=0$, we have automatically $\zeta>f_1'(0)$, and we again assume that $f$ $\zeta'$-majorizes $g$ for each $\zeta'>\zeta-\sigma$. All constants in this section will depend on $q,A,f_0,f_1,\zeta,g,K,\tht''$ {\it  but not on $f$}. Without loss we only need to consider fronts moving to the right, which we will denote by $w$.

We can assume $\tht''\ge \tht_0$ (otherwise we change $\tht''$ to $\tht_0$) 
and let
\[
\eps_0\equiv \frac 12\min\{ \tht',1-\tht'' \} \qquad \text{and} \qquad \til\tht\equiv 1-\frac{\eps_0}2,
\]
thus fixing $v$ from Lemma \ref{L.2.1}.

We let $w$ be an arbitrary front for \eqref{1.1} (in particular, we do not assume $w_t>0$) and $u$ the solution of \eqref{1.1} with the fixed initial condition $v$. Our strategy is as follows. First we will show that $w$ has to decrease exponentially as $x\to\infty$ in the same way as $u_n$ in the last section. Then we will use this to show that $u$ has to converge to some time shift of $w$ in $L^\infty$ as $t\to\infty$ (thus any two fronts for $f$ must approach each other up to a time shift as $t\to\infty$). Finally, we will show that the rate of this convergence is uniform for all $f$ and all fronts $w$ with uniformly bounded width. This gives a uniform convergence of solutions $u_n$ with initial data $v_n(x)\equiv v(x+n)$ to time shifts of any front $w$, and the uniqueness of the front follows.

Let $w$ be an arbitrary transition front for \eqref{1.1}. Consider $\Psi$ from the last section and let
\begin{align*} 
X_w(t) &\equiv \sup \{x_1\,|\, w(t,x)\ge\alpha_f(x) \text{ for some $x=(x_1,x')$}\},  \\
Y_w(t) &\equiv \inf \{y\,|\, w(t,x)\le\Psi(x_1-y,x) \text{ for all $x\in D$}\},  \\
Z^-_{w,\eps}(t) &\equiv  \sup \left\{ y \,\big |\, w(t,x)\ge 1-\eps \text{ when $x_1\le y$} \right\}, \\
Z^+_{w,\eps}(t) &\equiv \inf \left\{ y \,\big |\, w(t,x)\le \eps \text{ when $x_1\ge y$} \right\}, 
\end{align*}
as well as
\[
 L_{w,\eps} \equiv \sup_{t\in\bbR} \{Z^+_{w,\eps}(t)-Z^-_{w,\eps}(t)\}, \qquad Z_w(t)\equiv Z^-_{w,\eps_0}(t), \qquad\text{and}\qquad   L_w\equiv L_{w,\eps_0}.
\]
All of these, except possibly $Y_w(t)$, are finite because $w$ is a transition front, and we have $X_w(t)\in [Z^-_{w,\eps}(t),Z^+_{w,\eps}(t)]$ for $\eps\le\eps_0 (\le\min\{\tht_1,1-\tht_0\})$. The next lemma shows $Y_w(t)<\infty$.

\begin{lemma} \lb{L.3.1}
There is $\til C_2<\infty$ (depending on  $L_{w}$ if $w_t\not\ge  0$) such that for all $t$ we have 
\beq \lb{3.1}
|Y_w(t)-Z_w(t)|\le \til C_2.
\eeq 
\end{lemma}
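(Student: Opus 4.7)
The plan is to split the bound into its two halves, $Z_w(t)-Y_w(t) \le \tilde C_2$ and $Y_w(t)-Z_w(t) \le \tilde C_2$, and to use throughout the observation that $Z_w(t) \le X_w(t) \le Z_w(t)+L_w$: the lower bound holds because $w(t,x) \ge 1-\eps_0 \ge \tht_0 \ge \alpha_f(x)$ on $x_1 \le Z_w(t)$, and the upper bound because $w(t,x) \le \eps_0 < \tht_1 \le \alpha_f(x)$ on $x_1 > Z_w(t)+L_w$ (combining \eqref{2.8} with the bounded width).

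The first inequality is immediate from the explicit form of $\Psi$. For any $x$ with $x_1$ just below $Z_w(t)$ we have $w(t,x) \ge 1-\eps_0$, while by the definition of $Y_w(t)$, $w(t,x) \le \Psi(x_1-Y_w(t),x) \le [\sup_D\gamma/\inf_D\gamma]\, e^{-\lambda_\zeta(x_1-Y_w(t))}$. Solving yields a bound on $Z_w(t)-Y_w(t)$ depending only on $\eps_0$, $\lambda_\zeta$, and $\sup\gamma/\inf\gamma$.

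For the second inequality I would adapt the contradiction argument of Lemma \ref{L.2.5}. Assuming $Y_w(t_1)-X_w(t_1)>C_1$ at some $t_1$, define $t_0 \equiv \sup\{s<t_1 : Y_w(s)-X_w(s)\le C_0\}$ with a fixed $C_0<C_1$. On $[t_0,t_1]$ the moving supersolution $\psi(s,x)=\Psi(x_1-Y_w(t_0)-c_\zeta(s-t_0),x)$ of the $\zeta u$-equation dominates $w$ on the moving half-space $\{x_1 > \til Y_w(s)\}$ (where $w<\alpha_f$ and hence $f(x,w)<\zeta w$), forcing $Y_w$-growth at rate at most $c_\zeta$. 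Lemma \ref{L.2.6}(i), applied at the point witnessing $X_w(t_0)$, gives $X_w$-growth at rate $\ge c_0-\eps$ after the delay $t_\eps$. With $\eps=(c_0-c_\zeta)/2$ and $C_1=C_0+c_0 t_\eps+1$ the gap is driven below $C_1$ by time $t_1$, contradicting the hypothesis.

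What remains, and is the main obstacle, is showing $t_0>-\infty$. When $w_t \ge 0$, the strong maximum principle gives $w_t>0$, so $\underline w(x) \equiv \lim_{s\to-\infty} w(s,x)$ exists and is a bounded nonnegative solution of the elliptic analogue of \eqref{1.1}; since $\underline w(x) \le w(s,x) \to 0$ as $x_1\to\infty$ uniformly in $s$, Lemma \ref{L.2.3}(ii) forces $\underline w \equiv 0$, and by parabolic regularity $w(s,\cdot)\to 0$ locally uniformly, so $X_w(s),Z_w(s)\to-\infty$ as $s\to-\infty$. Together with the easy-side bound $Y_w(s)\ge Z_w(s)-C$ and the $c_\xi$ growth estimate \eqref{2.11}, this drives $Y_w-X_w \le C_0$ at some sufficiently negative $s$, yielding $t_0$; moreover, an argument parallel to Lemma \ref{L.2.7} shows that in this case $L_w$ itself is universally bounded, so $\tilde C_2$ is independent of $L_w$. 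For general $w$ the $w\to 0$ limit is unavailable, and one instead exploits the bounded width directly: since $\eps_0<\tht'$ and $f_1$ is ignition, $f(x,w)=0$ wherever $w\le\eps_0$, so on $\{x_1\ge Z_w(s)+L_w\}$ the function $w$ satisfies the linear equation $w_s+q\cdot\nabla w = \divg(A\nabla w)$ and is in particular a subsolution of the $\zeta u$-equation. A comparison on a fixed interval $[t-T,t]$ with a moving supersolution anchored at the right edge of the reaction zone then supplies the starting bound, at the price of letting $\tilde C_2$ depend on $L_w$.
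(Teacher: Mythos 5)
There is a genuine gap, and it sits exactly at the point your proposal identifies as "the main obstacle" but does not actually overcome: you never establish that $Y_w(t)$ is finite, i.e.\ that a general transition front $w$ decays at the exponential rate $\lambda_\zeta$ ahead of its reaction zone. Your contradiction argument needs, at some past time, the bound $Y_w(s)-X_w(s)\le C_0$; but all the facts you marshal for this are either lower bounds on $Y_w$ (the "easy side" $Y_w\ge Z_w-C$), forward-propagation of upper bounds that you do not yet possess (\eqref{2.11}), or qualitative statements ($w(s,\cdot)\to 0$ locally uniformly as $s\to-\infty$ in the monotone case) that say nothing about the decay rate in $x_1$ at any fixed time. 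Your proposed repair for the general case — comparing $w$ on a window $[t-T,t]$ with a moving supersolution $\Psi(x_1-\cdot,x)$ anchored at the right edge of the reaction zone — fails at the initial time of the window: on the slice $s=t-T$ you would need $w(t-T,x)\le\Psi(x_1-\til Y(t-T),x)$ for all $x_1$ to the right of the anchor, but $\Psi$ decays exponentially there while $w$ is only known to be $\le\eps_0$ with no rate. The paper's device for precisely this difficulty is to regularize: it works with $Y_{w,\eps}(t)\equiv\inf\{y\,|\,w(t,x)\le\Psi(x_1-y,x)+\eps\}$, which is finite at \emph{every} time and satisfies $Y_{w,\eps}(t_0)\le Z^+_{w,\eps}(t_0)$ automatically (since $\Psi(0,x)\ge 1$), hence $Y_{w,\eps}(t_0)-X_{w,\eps}(t_0)\le L_{w,\eps}$ for every $t_0$. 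One then runs the speed-comparison forward from an arbitrary $t_0$ (the comparison function $\Psi(\cdot)+\eps$ solves the equation with $\zeta(u-\eps)$ in place of $f$, and $\alpha_{f,\eps}$ is adjusted accordingly), concludes $Y_{w,\eps}(t)-X_{w,\eps}(t)\le C_2'$ for all $t\ge t_0+t_\eps''$ with $C_2'$ independent of $\eps$, and only at the very end lets $\eps\to 0$ to deduce $Y_w(t)<\infty$ together with \eqref{3.1}. Finiteness of $Y_w$ is an output of the proof, not an input, and without some version of this regularization your argument cannot start.

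A secondary, repairable, issue: in the main propagation step you invoke Lemma \ref{L.2.6}(i) at a point witnessing $X_w(t_0)$, but part (i) requires $u_t\ge 0$, which a general transition front does not satisfy. The paper explicitly avoids this by using Lemma \ref{L.2.6}(ii) instead, via the inequality $0\le X_{w,\eps}(t)-Z_w(t)\le L_w$ (your own opening observation): the bounded width guarantees a plateau of height $\ge 1-\eps_0\ge(1+\tht)/2$ of length $L'$ behind $Z_w(t)$, so part (ii) drives $Z_w$, hence $X_{w,\eps}$, forward at speed $>c_\zeta$ after a delay. This substitution is what makes $\til C_2$ depend on $L_w$ in the non-monotone case and is also the reason the dependence disappears when $w_t\ge 0$ (where part (i) does apply), as you correctly anticipate. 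The first half of your proof ($Z_w-Y_w$ bounded above) and the overall two-speed strategy are the same as the paper's.
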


\begin{proof}
Again $Z_w(t)-Y_w(t)\le \til C_2$ (with a uniform bound) is immediate so we are left with proving $Y_w(t)-Z_w(t)\le \til C_2$. We fix any $\eps\in (0,\eps_0)$ and for $t\in\bbR$ define
\begin{align*} 
\alpha_{f,\eps}(x) & \equiv \inf\{ u\in (\eps,1] \,|\, f(x,u)\ge \zeta (u-\eps) \} \uparrow \alpha_f(x) \quad\text{as $\eps\to 0$}, \\
X_{w,\eps}(t) &\equiv \sup \{x_1\,|\, w(t,x)\ge\alpha_{f,\eps}(x) \text{ for some $x=(x_1,x')$}\} \downarrow X_w(t) \quad\text{as $\eps\to 0$},  \\
Y_{w,\eps}(t) &\equiv \inf \{y\,|\, w(t,x)\le\Psi(x_1-y,x)+\eps \text{ for all $x\in D$}\} \uparrow Y_w(t) \quad\text{as $\eps\to 0$}.
\end{align*}
The convergences hold because $\tht'>0$, $w$ and $f$ are continuous, and $\bbT^{d-1}$ is compact. Note that $\alpha_{f,\eps}(x)\in[\tht',\tht'']$ because $f(x,\cdot)$ decreases on $[\tht'',1]$. Thus for any $t$ and $\eps\le\eps_0$,
\beq \lb{3.2}
0\le X_{w,\eps}(t)-Z_w(t)\le L_w
\eeq
by $\eps_0<\tht',1-\tht''$.
Hence it is sufficient to show
\beq \lb{3.3}
Y_{w,\eps}(t)-X_{w,\eps}(t)\le C_2'
\eeq
with $C_2'$ independent of $\eps$ (then use \eqref{3.2} and take $\eps\to 0$ to obtain $Y_w(t)-Z_w(t)\le \til C_2\equiv C_2'+L_w$). We will prove \eqref{3.3} using the argument from Lemma \ref{L.2.5}. We do not have $w_t>0$ here but Lemma \ref{L.2.6}(i) will not be needed. Instead, Lemma \ref{L.2.6}(ii) will suffice thanks to \eqref{3.2}.

Pick any $t_0\in\bbR$ and notice that $\Psi(0,x)\ge 1$ implies $Y_{w,\eps}(t_0)\le Z^+_{w,\eps}(t_0)$. We also have $X_{w,\eps}(t_0)\ge Z_w(t_0)\ge Z^-_{w,\eps}(t_0)$ by \eqref{3.2}, so 
\[
Y_{w,\eps}(t_0)-X_{w,\eps} (t_0)\le L_{w,\eps}.
\]
As long as $X_{w,\eps}(t)\le Y_{w,\eps}(t)$ for $t\ge t_0$, the argument in Lemma \ref{L.2.5} shows that $Y_{w,\eps}(t)$ increases with average speed at most $c_\zeta$ (i.e., $Y_{w,\eps}(t)\le Y_{w,\eps}(t_0)+c_\zeta(t-t_0)$)  because $\psi(t,x)\equiv \Psi(x_1-Y_{w,\eps}(t_0)-c_\zeta(t-t_0),x)+\eps$ solves \eqref{1.1} with $\zeta(u-\eps)$ in place of $f$. On the other hand, Lemma \ref{L.2.6}(ii) means that $Z_w(t)$ (and thus also $X_{w,\eps}(t)$ due to \eqref{3.2}) increases with average speed at least $(c_0+c_\zeta)/2 > c_\zeta$ after an initial time delay $t_{(c_0-c_\zeta)/2}'$ (independent of $t_0$). Thus the faster moving $X_{w,\eps}(t)$ will catch up with $Y_{w,\eps}(t)$ and we have $X_{w,\eps}(t_1)\ge Y_{w,\eps}(t_1)$ for some $t_1\in[t_0,t_0+t_\eps'']$. Here $t_\eps''$ is independent of $t_0$ because the speed difference $\ge (c_0-c_\zeta)/2$ and initial distance $\le L_{w,\eps}$ for all $t_0$. After the time $t_1$ the argument of Lemma \ref{L.2.5} shows again that $Y_{w,\eps}(t)-X_{w,\eps}(t)$ must stay uniformly bounded above (independently of $\eps$). Indeed, $Y_{w,\eps}(t)$ is again continuous (using $\xi\equiv \sup_{u\in(\eps_0,1)} f_1(u)/(u-\eps_0)\ge\zeta$ in Lemma \ref{L.2.4}) and increases with average speed at most $c_\zeta$ when $X_{w,\eps}(t)\le Y_{w,\eps}(t)$. On the other hand, starting from any time $\tau\in\bbR$, $X_{w,\eps}(t)$ increases with average speed at leats $(c_0+c_\zeta)/2$  after an initial time delay $t_{(c_0-c_\zeta)/2}'$ (independent of $\eps$) due to \eqref{3.2} and Lemma \ref{L.2.6}(ii). This proves the existence of $C_2'$ (depending on $L_w$ but independent of $\eps$) such that \eqref{3.3} holds for all $t\ge t_0+t_\eps''$. Since $t_0$ has been arbitrary, \eqref{3.3} holds for all $t$ and all $\eps\in (0,\eps_0)$, and taking $\eps\to 0$ gives \eqref{3.1}. 

In particular, $Y_w(t)$ is finite, and as in Lemma \ref{L.2.4} we obtain for $t\ge \tau$,
\beq\lb{3.3b}
Y_w(t)-Y_w(\tau)\le c_\xi(t-\tau). 
\eeq

Finally, note that if $w_t\ge 0$, then Lemma \ref{L.2.6}(i) applies to $w$. As a result, the proof of  \eqref{3.3} for small enough $\eps$ is identical to that of \eqref{2.12} (using that  $f$ $(\zeta-\sigma/2)$-majorizes $g$). Hence we do not need \eqref{3.2} to show that $X_{w,\eps}(t)$ increases with average speed larger than $(c_0+c_\zeta)/2$, and $C_2'$ becomes $L_w$-independent. Then $\eps\to 0$ gives $Y_w(t)-X_w(t)\le C_2'$ and thus
\beq \lb{3.3a}
|Y_w(t)-X_w(t)|\le C_2'
\eeq
This and $L_w$-independent upper bounds on $X_w(t-t_{\eps_0})-Z_w(t)$ (from Lemma \ref{L.2.6}(i)) and on $X_w(t)-X_w(t-t_{\eps_0})$ (from \eqref{3.3a} and  \eqref{3.3b}) show that $\til C_2$ in \eqref{3.1} is also $L_w$-independent.
\end{proof}

{\it Remark.} Notice that this result, together with the definition of $Y_w$ and \eqref{3.4b} below, shows that $L_{w,\eps}$ depends only on $L_w$ and $\eps$.
\smallskip

Let $u$ be the solution of \eqref{1.1}  with initial condition $v(x)$ from Lemma \ref{L.2.1} (with fixed $\til\tht\equiv 1-\eps_0$). Let us define
\begin{align*} 
X_u(t) &\equiv \sup \{x_1\,|\, u(t,x)\ge \alpha_f(x) \text{ for some $x=(x_1,x')$}\},  \\
Y_u(t) &\equiv \inf \{y\,|\, u(t,x)\le\Psi(x_1-y,x) \text{ for all $x\in D$}\}, \\
Z^-_{u,\eps}(t) &\equiv  \sup \left\{ y \,\big |\, u(t,x)\ge 1-\eps \text{ when $x_1\le y$} \right\}, \\
Z^+_{u,\eps}(t) &\equiv \inf \left\{ y \,\big |\, u(t,x)\le \eps \text{ when $x_1\ge y$} \right\},
\end{align*}
as well as
\[
L_{u,\eps} \equiv \sup_{t\ge t'_\eps} \{Z^+_{u,\eps}(t)-Z^-_{u,\eps}(t) \}, \qquad Z_u(t) \equiv  Z^-_{u,\eps_0}(t) \qquad\text{and}\qquad L_u\equiv L_{u,\eps_0},
\]
with $t'_\eps$ from Lemma \ref{L.2.6}(ii). All these are finite as in Section \ref{S2} and again
\beq\lb{3.3c}
Y_u(t)-Y_u(\tau)\le c_\xi(t-\tau). 
\eeq
Lemmas \ref{L.2.5}--\ref{L.2.7} and $u_t>0$ again show 
\beq \lb{3.4a}
|Y_u(t)-Z_u(t)|\le C_2
\eeq
for some $f$-independent $C_2$. We also have that for each $\eps>0$ there are $C_\eps,\til C_\eps<\infty$ (the latter $L_w$-dependent if $w_t\not\ge  0$) such that for any $t\ge \tau$,
\begin{align} 
Z_{w,\eps}^-(t) & \ge Z_w(\tau) + \frac{c_0+c_\zeta}2 (t-\tau) - \til C_\eps, \lb{3.4b} \\
Z_{u,\eps}^-(t) & \ge Z_u(\tau) + \frac{c_0+c_\zeta}2 (t-\tau) - C_\eps \quad (\tau\ge t'_\eps \text{ if $\eps\le 1-\til\tht = \tfrac{\eps_0}2$}). \lb{3.4c}
\end{align}
Here \eqref{3.4c} holds because Lemma \ref{L.2.6}(ii) shows that 
\[
Z_{u,\eps}^-(t)  \ge Z_u(\tau-t'_\eps) + \frac{c_0+c_\zeta}2 (t-\tau)
\]
and $Z_u(\tau)-Z_u(\tau-t'_\eps)$ is uniformly bounded in $\tau$ due to \eqref{3.3c} and  \eqref{3.4a}. The same argument works for $w$ but it uses \eqref{3.1} and so $\til C_\eps$ depends on $L_w$ via $\til C_2$ (unless $w_t\ge 0$).

Before we can show that $u$ converges to some time shift of $w$, we need to prove that once $u$ is close to a time shift of $w$, it will not depart far from it.

\begin{lemma} \lb{L.3.2}
For each $\eps>0$ there is a $\del>0$ (depending also on $L_w$ if $w_t\not\ge  0$) such that:

(i) If $w(t_1,x)\le u(t_0,x) + \del$ for some $t_0\ge1$, $t_1\in\bbR$ and all $x\in D$, then $w(t+t_1-t_0,x) \le u(t,x) + \eps$ for all $t\ge t_0$ and $x\in D$.

(ii) If $w(t_1,x) \ge u(t_0,x) - \del$ for some $t_0\ge 1$, $t_1\in\bbR$ and all $x\in D$, then $w(t+t_1-t_0,x) \ge u(t,x) - \eps$ for all $t\ge t_0$ and $x\in D$.
\end{lemma}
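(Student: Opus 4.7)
I prove (i) by contradiction, using parabolic compactness together with the strong maximum principle; part (ii) follows by the same argument with the roles of $u$ and $w$ interchanged in the difference.

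Suppose (i) fails for some $\eps\in(0,\eps_0)$. Then there are sequences $\delta_n\to 0^+$, $t_0^{(n)}\ge 1$, $t_1^{(n)}\in\bbR$ with $w(t_1^{(n)},x)\le u(t_0^{(n)},x)+\delta_n$ for all $x$, and a smallest time $s_n\ge t_0^{(n)}$ at which
\[
\sup_x\big(w(s_n+t_1^{(n)}-t_0^{(n)},x)-u(s_n,x)\big)=\eps.
\]
By Lemmas~\ref{L.2.7} and~\ref{L.3.1}, and the asymptotics $w,u\to 0$ (resp.\ $1$) as $x_1\to+\infty$ (resp.\ $-\infty$), this supremum is attained at some $y^{(n)}=(y_1^{(n)},y'^{(n)})\in D$ with $y_1^{(n)}$ in a neighborhood of the reaction zones whose size is uniform in $n$. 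Write $y_1^{(n)}=k_n p+r_n$ with $r_n\in[0,p)$, and using the $p$-periodicity of $q$ and $A$, define
\[
U_n(s,x):=u(t_0^{(n)}+s,\,x+k_n p e_1),\qquad W_n(s,x):=w(t_1^{(n)}+s,\,x+k_n p e_1),
\]
and $f_n(x,u):=f(x+k_n p e_1,u)$. Both $U_n,W_n$ solve \eqref{1.1} with reaction $f_n$ (and the same $q,A$), inheriting the bounded widths of $u,w$ uniformly in $n$. Pass to a subsequence along which $f_n\to f_\infty$ locally uniformly (with $f_\infty$ still satisfying (H1) and $\zeta'$-majorizing $g$ for each $\zeta'>\zeta-\sigma$), the shifted $y^{(n)}$ converges to some $y_\infty\in[0,p]\times\bbT^{d-1}$, and by parabolic regularity $U_n\to U_\infty$ and $W_n\to W_\infty$ in $C^{1;2}_{\rm loc}$ on the relevant time intervals, to solutions of \eqref{1.1} with reaction $f_\infty$; the uniform width bounds pass to the limit.

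Let $T_n:=s_n-t_0^{(n)}$ and split into two cases. If $T_n\to T_\infty<\infty$ along a subsequence, then $h:=W_\infty-U_\infty$ solves a linear parabolic equation with bounded coefficients on $[0,T_\infty]\times D$, with $h(0,\cdot)\le 0$ and $h(s,x)\to 0$ as $x_1\to\pm\infty$ uniformly for $s\in[0,T_\infty]$. The comparison principle forces $h\le 0$, contradicting $h(T_\infty,y_\infty)=\eps>0$. If instead $T_n\to\infty$, shift time: $\tilde U_n(s,x):=U_n(T_n+s,x)$ and $\tilde W_n(s,x):=W_n(T_n+s,x)$. Extract a further subsequence so that $\tilde U_n\to \tilde U_\infty$ and $\tilde W_n\to\tilde W_\infty$ in $C^{1;2}_{\rm loc}((-\infty,0]\times D)$, still solutions of \eqref{1.1} with $f_\infty$ and with bounded widths. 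By minimality of $s_n$, $h:=\tilde W_\infty-\tilde U_\infty\le\eps$ on $(-\infty,0]\times D$ with $h(0,y_\infty)=\eps$. Since $h$ solves a linear parabolic equation with bounded coefficients, the parabolic strong maximum principle (applied via the Harnack inequality to the non-negative supersolution $\eps-h$, which vanishes at the interior point $(0,y_\infty)$) gives $h\equiv\eps$ on $(-\infty,0]\times D$, contradicting $h(s,x)\to 0$ as $x_1\to\pm\infty$.

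The main obstacle is verifying that the bounded-width property passes to each of the limits $U_\infty,W_\infty,\tilde U_\infty,\tilde W_\infty$ uniformly enough that $h\to 0$ at $x_1\to\pm\infty$ in the limiting problem. This is precisely where the $L_w$-dependence of $\delta$ (needed when $w_t\not\ge 0$) enters: Lemma~\ref{L.3.1} yields a width bound on $w$ depending on $L_w$ in the general case (and is $L_w$-independent when $w_t\ge 0$), and this bound is what confines $y_1^{(n)}$ relative to the reaction zones so that the spatial shifts by $k_n p$ produce a precompact family. Part (ii) follows by the identical scheme applied to $u-w$ in place of $w-u$.
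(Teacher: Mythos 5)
Your compactness scheme breaks down at the decisive step, the case $T_n\to\infty$ (which is in fact the only case that can occur: since $h=W_n-U_n$ satisfies $h_t+q\cdot\nabla h-\divg(A\nabla h)=r h$ with $|r|\le K$, comparison gives $\sup_x h(s,\cdot)\le\delta_n e^{Ks}$, so $T_n\ge K^{-1}\log(\eps/\delta_n)\to\infty$). There you claim that $h=\tilde W_\infty-\tilde U_\infty\le\eps$ attaining the value $\eps$ at an interior point forces $h\equiv\eps$ by the strong maximum principle, "applied via the Harnack inequality to the non-negative supersolution $\eps-h$." But $\eps-h$ is \emph{not} a supersolution of the linear equation: it satisfies $(\eps-h)_t+q\cdot\nabla(\eps-h)-\divg(A\nabla(\eps-h))-r(\eps-h)=-r\eps$, and $r$ (a difference quotient of $f$ in $u$) is positive in the reaction zone. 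The strong maximum principle at a \emph{positive} interior maximum requires $r\le 0$; for unsigned bounded $r$ the conclusion is false (e.g.\ $h(t,x)=e^{t}\cos x$ solves $h_t=h_{xx}+2h$ and attains its maximum $1$ over $t\le 0$ at interior points without being constant). This is precisely the difficulty the paper's Lemma \ref{L.3.4} has to work around: there the touching argument is run at the \emph{infimum} of $z=\til w-\til u(\cdot+\tau,\cdot)$, after first using Harnack to show $z\ge0$ on the reaction zone and the monotonicity of $f$ near $0$ and $1$ together with $\til u_t\ge0$ to get $r\le0$ \emph{off} the reaction zone, so that $z_t+q\cdot\nabla z-\divg(A\nabla z)=rz\ge0$ wherever $z\le0$. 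Your $h$ has no such sign structure available at its supremum, and no analogous repair is offered.

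The paper's actual proof of Lemma \ref{L.3.2} is entirely different and quantitative: it builds explicit super/subsolutions $z_\pm(t,x)=u(t\pm\beta(t),x)\pm\phi_\pm(t,x)$, where $\phi_\pm$ is an exponential profile moving at speed $c_\zeta<c_0$ (so it decays near the advancing reaction zones of $u$ and $w$), and the bounded time reparametrization $\beta$ absorbs the error $K\phi_\pm$ using the uniform bounds $\omega\le u_t$ on the reaction zone and $u_t\le\Omega$ everywhere. Besides closing the argument, this construction is what gives the explicit, $f$-uniform dependence of $\del$ on $\eps$ demanded by the section's conventions and is reused verbatim in Lemma \ref{L.4.1}; a purely qualitative contradiction argument, even if repaired, would also need to let $f$ (not just translates of a fixed $f$) vary along the sequence to deliver that uniformity. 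As it stands, the proposal does not prove the lemma.
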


\begin{proof}
This is proved via a construction of suitable supersolution and subsolution. To do that, let $\kappa(\lambda_\zeta/2)\ge 0$ and $\gamma(x;\lambda_\zeta/2)>0$ be from \eqref{2.8a} with $\lambda=\lambda_\zeta/2$. We then have from the convexity of $\kappa(\lambda)$ \cite[Proposition 5.7(iii)]{BH} and $\kappa(0)=0$,
\[
 \kappa \left( \frac{\lambda_\zeta}2 \right) \le \frac { \kappa(\lambda_\zeta)} {2}  <  \frac{c_\zeta \lambda_\zeta}2.
\]
This means that if we continue $\gamma(x;\lambda_\zeta/2)$ periodically on $D$ and let
\[
\Phi(s,x)\equiv \frac {\sup_D \gamma(x;\lambda_\zeta)} { \inf_D \gamma(x;\lambda_\zeta) \inf_D \gamma(x;\lambda_\zeta/2)} e^{-\lambda_\zeta s/2} \gamma(x;\lambda_\zeta/2) >0,
\] 
then 
\beq \lb{3.5}
\Phi(s,x)\ge e^{\lambda_\zeta s/2}\Psi(s,x)
\eeq
for $(s,x)\in \bbR\times D$, and for each $y\in\bbR$ the function $\phi(t,x)\equiv\Phi(x_1-y-c_\zeta t ,x)$ satisfies
\beq \lb{3.6}
\phi_t + q\cdot\nabla\phi - \divg(A\nabla  \phi) = \left[ \frac{c_\zeta \lambda_\zeta}2 - \kappa \left( \frac{\lambda_\zeta}2 \right)  \right] \phi \ge 0.
\eeq

Next pick $\omega\le 1\le \Omega$ so that for each $f$ as in the statement of Theorem \ref{T.1.1} (with fixed $q,A,f_0,f_1,\zeta,g,K,\tht''$),
\begin{align}
0<\omega\le &\inf \{u_t(t,x) \,|\, t\ge 1\text{ and } x_1\in [Z_u(t), Z_{u,\eps_0}^+(t)] \},  \lb{3.7} \\
\infty> \Omega\ge &\sup \{u_t(t,x) \,|\, t\ge 1\text{ and } x\in D \}.  \lb{3.8} 
\end{align}
The existence of such $\Omega$ follows from parabolic regularity and boundedness of $u$. The existence of $\omega$ is guaranteed by $u_t>0$ and is proved as follows. Assume the contrary, that is, there are sequences $f_n$ and $(t_n, x^n)\in [1,\infty)\times [Z_u(t_n), Z_{u,\eps_0}^+(t_n)]\times\bbT^{d-1}$  such that $u_t(t_n,x^n)\to 0$. 
As at the end of Section \ref{S2}, the functions $u_n(t,x)\equiv u(t+t_n,x+\lfloor x^n_1\rfloor e_1)$  contain a subsequence which converges in $C^{1;2}_{\rm loc}$ to a solution $\til u$ of \eqref{1.1}  on $(-1,\infty)\times D$ (with the same $q,A$ and some Lipschitz reaction $\til f(x,u)\in [f_0(u),f_1(u)]$ which is a locally uniform limit of a subsequence of $f_n(x+\lfloor x^n_1\rfloor e_1,u)$). But then $\til u_t(0,\til x)=0$ for some $\til x\in\bbT^d$, and so $\til u_t\ge 0$ and the strong maximum principle for $\til u_t$ show $\til u_t\equiv 0$. Since $Z_{u,\eps_0}^+(t)-Z_u(t)$ is uniformly bounded (in $t$ and $f$) due to \eqref{3.4a}, we have $\limsup_{x_1\to\infty} \til u(t,x)\le\eps_0$ and $\liminf_{x_1\to -\infty} \til u(t,x)\ge 1-\eps_0$. This contradicts $\til u_t\equiv 0$ because
\[
Z_{\til u}(t) \equiv \sup \{y \,|\, \til u(t,x)\ge 1-\eps_0 \text{ when $x_1\le y$}\}
\]
again grows with a positive average speed after an initial time delay, by Lemma \ref{L.2.6}(ii). 

Finally, assume without loss that $\eps\le\eps_0$ and $t_1=t_0$ (otherwise we shift $w$ in $t$ by $t_1-t_0$), and increase $K$ so that
\beq \lb{3.8a}
K\ge \frac {\lambda_\zeta(c_0-c_\zeta)} 4.
\eeq

(i) Notice that \eqref{3.4a} and the hypothesis show that (after possibly increasing $C_2$ by a constant only depending on $\Psi$ and thus not on $f,u,w$)
\beq \lb{3.9}
Z_u(t_0)\ge Z_w(t_0) - C_2.
\eeq
Let
\begin{align}
C_\eps' & \equiv C_2+\til C_2 + C_\eps+\til C_\eps+1, \notag\\
\beta(t) & \equiv \frac \eps {\Omega} \left( 1-e^{-\lambda_\zeta(c_0-c_\zeta)(t-t_0)/4} \right), \lb{3.10}\\
\phi_+(t,x) & \equiv \frac { \eps \lambda_\zeta(c_0-c_\zeta)  e^{-\lambda_\zeta C_\eps'/2} \omega}{4\Omega K \sup_D \Phi(0,x)} \Phi(x_1-Y_w(t_0)-c_\zeta(t-t_0),x) \lb{3.11}
\end{align}
so that \eqref{3.6} holds for $\phi_+$, and define for $t\ge t_0$,
\[
z_+(t,x)\equiv \til u_+(t,x)+\phi_+(t,x) \equiv u(t+\beta(t),x) + \phi_+(t,x).
\]
Our aim is to show
\beq \lb{3.12a}
z_+(t,x)\ge w(t,x)
\eeq
for all $x\in D$ and $t\ge t_0$. This estimate might not appear very useful because $\phi_+$ is unbounded but it will suffice. The reason is that at $t=t_0$, the function $\phi_+$ is large only where both $u,w$ are close to 1 and therefore also to each other. This setup will persist for all $t\ge t_0$ because $\phi_+$ travels with speed $c_\zeta$ which is strictly smaller than the speeds of propagation of $u$ and $w$. Thus, in fact, $\phi_+$ decays near the reaction zones of $u,w$ as $t$ grows.

Let $b_\eps\,(\le \eps)$ be the fraction in \eqref{3.11} and consider $\del\le b_\eps \inf_D \Phi (2 |\ln b_\eps|/\lambda_\zeta ,x )$. Then 
\beq \lb{3.13}
z_+(t_0,x)\ge w(t_0,x)
\eeq
for $x_1\le Y_w(t_0) + 2|\ln b_\eps|/\lambda_\zeta$ by the hypothesis and for $x_1\ge Y_w(t_0) + 2|\ln b_\eps|/\lambda_\zeta$ by the definition of $Y_w(t_0)$ and by $b_\eps \Phi (2 |\ln b_\eps|/\lambda_\zeta +y ,x ) \ge \Psi (2 |\ln b_\eps|/\lambda_\zeta +y,x )$ when $y\ge 0$ (see \eqref{3.5}). 

Moreover, we will prove that $z_+$ is a supersolution of \eqref{1.1} for $t\ge t_0$, with $f(x,u)=0$ when $u\ge 1$. Since \eqref{3.6} gives
\[
(z_+)_t+q\cdot\nabla z_+ - \divg(A\nabla  z_+) \ge f(x,z_+) + \left[ f(x,\til u_+)-f(x,z_+) + \beta' (t) u_t(t+\beta(t),x) \right],
\]
this will be established if we show that the square bracket is non-negative. 

This is clearly true for $x_1\le Z_u(t+\beta(t))$ since then $\til u_+(t,x)\ge \tht''$ and so $f(x,\til u_+)\ge f(x,z_+)$. Next, \eqref{3.4c}, \eqref{3.9}, \eqref{3.1}, and $\beta(t)\ge 0$ for $t\ge t_0$ give
\beq \lb{3.14a}
Z_{u,\eps}^-(t+\beta(t))-Y_w(t_0)-c_\zeta(t-t_0) \ge \frac{c_0-c_\zeta}2 (t-t_0) - C_\eps',
\eeq 
so for $x_1\ge Z_{u,\eps}^-(t+\beta(t))$ we find using \eqref{3.8a},
\beq \lb{3.14b}
\phi_+(t,x)\le b_\eps e^{-\lambda_\zeta [ (c_0-c_\zeta)(t-t_0)/2 - C_\eps'] /2} \sup_D \Phi(0,x) = \frac {\beta'(t)\omega} K \le \eps \frac{\lambda_\zeta(c_0-c_\zeta)}{4K} \le \eps.
\eeq
This gives for $x_1\ge Z_{u,\eps}^-(t+\beta(t))$,
\[ 
|f(x,\til u_+)-f(x,z_+)| \le K \phi_+ \le  \beta'(t)\omega.
\] 
Thus the square bracket is again non-negative for $Z_u(t+\beta(t))\le x_1\le  Z_{u,\eps_0}^+(t+\beta(t))$ due to \eqref{3.7} and $Z_u(t+\beta(t))\ge Z_{u,\eps}^-(t+\beta(t))$. The same is true for $x_1\ge Z_{u,\eps_0}^+(t+\beta(t))$ because then \eqref{3.14b} implies $z_+(t,x)\le \eps_0+\eps\le 2\eps_0 \le \tht'$, yielding $f(x,z_+)=0$.

Hence $z_+$ is a supersolution of \eqref{1.1} with \eqref{3.13}, meaning that \eqref{3.12a} holds. Thus for $x_1\ge Z_{u,\eps}^-(t+\beta(t))$,
\[
u(t,x)-w(t,x)\ge z_+(t,x)-w(t,x) -\phi_+(t,x)- \beta(t) \Omega  \ge 0 - \eps -\eps = - 2 \eps
\]
using \eqref{3.14b} and \eqref{3.10}, and for $x_1\le Z_{u,\eps}^-(t+\beta(t))$,
\[
u(t,x)-w(t,x)\ge \til u_+(t,x)-\beta(t)\Omega -1 \ge \til u_+(t,x)-\eps-1 \ge -2\eps.
\]
This proves (i) with $2\eps$ in place of $\eps$. Note that $\delta$ also depends on $C_\eps'$ and thus on $L_w$ when $w_t\not\ge 0$.

(ii) Recall that we assume $\eps\le\eps_0$, $t_1=t_0$, and \eqref{3.8a}, and let us also assume $\eps\le c_0^{-1}$. This time  \eqref{3.1} and the hypothesis give (after increasing $\til C_2$ by an $f,u,w$-independent constant)
\beq \lb{3.14c}
Z_w(t_0)\ge Z_u(t_0)-\til C_2.
\eeq
Then the proof goes along the same lines as in (i) but using
\begin{align}
\phi_-(t,x) & \equiv \frac { \eps \lambda_\zeta(c_0-c_\zeta)  e^{-\lambda_\zeta C_\eps'/2} \omega}{4\Omega K \sup_D \Phi(0,x)} \Phi(x_1-Y_u(t_0)-c_\zeta(t-t_0),x), \lb{3.15a} \\
z_-(t,x) &\equiv \til u_-(t,x)-\phi_-(t,x) \equiv u(t-\beta(t),x) - \phi_-(t,x). \notag 
\end{align}
This time 
\[
(z_-)_t+q\cdot\nabla z_- - \divg(A\nabla  z_- ) \le f(x,z_-) - \left[ f(x,z_-) - f(x,\til u_-)  + \beta' (t) u_t(t-\beta(t),x) \right].
\]
with $f(x,u)=0$ for $u\le 0$, and 
\beq \lb{3.16}
z_-(t_0,x)\le w(t_0,x)
\eeq
if $\del$ is as in (i). We again need to show that the square bracket is non-negative.

For $x_1\ge  Z_{u,\eps_0}^+(t-\beta(t))$ we have $ \til u_-(t,x)\le\eps_0$, so $f(x,\til u_-)=0$ and the square bracket is non-negative. For $Z_u(t-\beta(t))\le x_1\le  Z_{u,\eps_0}^+(t-\beta(t))$ the same is true because
\beq \lb{3.16a}
Z_{u,\eps}^-(t-\beta(t))-Y_u(t_0)-c_\zeta(t-t_0) \ge \frac{c_0-c_\zeta}2 (t-t_0) - C_\eps'
\eeq 
(from \eqref{3.4c}, \eqref{3.4a}, and $\beta(t)\le \eps\le c_0^{-1}\le 2/(c_0+c_\zeta)$) again gives for $x_1\ge Z_{u,\eps}^-(t-\beta(t))$,
\begin{align}
\phi_-(t,x)\le b_\eps e^{-\lambda_\zeta [ (c_0-c_\zeta)(t-t_0)/2 - C_\eps'] /2} \sup_D \Phi(0,x) &= \frac {\beta'(t)\omega} K \le \eps \frac{\lambda_\zeta(c_0-c_\zeta)}{4K} \le \eps, \lb{3.16b} \\
|f(x,\til u_-)-f(x,z_-)| \le & K \phi_- \le  \beta'(t)\omega. \notag 
\end{align}
For $x_1\le Z_u(t-\beta(t))$ we have  $\til u_-(t,x)\ge 1-\eps_0$, so the bracket is non-negative as long as $\phi_-(t,x)\le\eps_0$ (because then $1-\tht''\le z_-(t,x)\le \til u_-(t,x)$). This means that $z_-$ is a subsolution of \eqref{1.1} where $\phi_-(t,x)\le\eps_0$. 

Since  \eqref{3.4b}, \eqref{3.14c}, and \eqref{3.4a} imply
\[
Z_{w,\eps}^-(t)-Y_u(t_0)-c_\zeta(t-t_0) \ge \frac{c_0-c_\zeta}2 (t-t_0) - C_\eps',
\]
\eqref{3.16b} also holds for $x_1\ge Z_{w,\eps}^-(t)$. Thus $z_-$ is a subsolution of \eqref{1.1} on the set where $\phi_-(t,x)\le \eps_0$ while on the complement of that set we have $x_1\le Z_{w,\eps}^-(t)$ and so
\[
w(t,x)\ge 1-\eps\ge 1-\eps_0\ge 1-\phi_-(t,x)\ge z_-(t,x).
\]
This together with \eqref{3.16} gives $z_-(t,x)\le w(t,x)$ for $t\ge t_0$ and $x\in D$. The rest of the proof is analogous to (i), with $Z_{w,\eps}^-(t)$ in place of $Z_{u,\eps}^-(t+\beta(t))$.
\end{proof}

\begin{lemma} \lb{L.3.3}
If
\beq \lb{3.17}
\tau_w \equiv \inf\{ \tau\,\big|\, \liminf_{t\to\infty} \inf_{x\in D} [w(t+\tau,x)-u(t,x)]\ge 0 \},
\eeq
then $-\infty<\tau_w<\infty$. Moreover, the infimum is also a minimum and so
\beq \lb{3.18}
\liminf_{t\to\infty} \inf_{x\in D} [w(t+\tau_w,x)-u(t,x)]\ge 0.
\eeq
\end{lemma}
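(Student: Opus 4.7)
The plan is to verify the three claims in turn. Write $S\equiv\{\tau\in\bbR : \liminf_{t\to\infty}\inf_{x\in D}[w(t+\tau,x)-u(t,x)]\ge 0\}$, so $\tau_w=\inf S$.

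\emph{Finiteness from above ($\tau_w<\infty$):} I would produce an explicit $\tau_0\in S$ via the parabolic comparison principle. Since \eqref{3.4b} applied to the front $w$ gives $Z^-_{w,\eps_0/2}(s)\to\infty$ as $s\to\infty$, pick $\tau_0>0$ so that $Z^-_{w,\eps_0/2}(\tau_0)\ge 0$. Then $w(\tau_0,x)\ge 1-\eps_0/2=\til\tht\ge v(x)$ for $x_1\le 0$, while for $x_1\ge 0$ the initial datum $v$ vanishes, so $w(\tau_0,x)\ge 0=v(x)$. Thus $w(\tau_0,x)\ge v(x)=u(0,x)$ pointwise on $D$, and the parabolic comparison principle applied to the two solutions $w(\cdot+\tau_0,\cdot)$ and $u$ of \eqref{1.1} yields $w(t+\tau_0,x)\ge u(t,x)$ for all $(t,x)\in[0,\infty)\times D$. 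Hence $\tau_0\in S$.

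\emph{Finiteness from below ($\tau_w>-\infty$):} Here the plan is to show that a very negative $\tau$ cannot lie in $S$. For $\tau\in S$ and $\eps\in(0,1-2\eps_0)$, eventually $w(t+\tau,x)\ge u(t,x)-\eps$, so any $x$ with $x_1\le Z_u(t)$ (where $u\ge 1-\eps_0$) satisfies $w(t+\tau,x)\ge 1-\eps_0-\eps>\eps_0$. This forces $Z_u(t)\le Z^+_{w,\eps_0}(t+\tau)\le Z_w(t+\tau)+L_w$ via Lemma \ref{L.3.1}. Combining this with the lower bound $Z_u(t)\ge c't+O(1)$ (from \eqref{3.4c} with $\eps=\eps_0/2$ and $Z_u\ge Z^-_{u,\eps_0/2}$) and the upper bound $Z_w(s)\le c's+O_w(1)$ valid for $s\le 0$ (obtained by running \eqref{3.4b} backward from $0$ together with \eqref{3.1}), any $t$ satisfying both $t+\tau\le 0$ and $t$ past the $\eps$-threshold dictated by the liminf condition then gives $c'\tau\ge -C$ with $C$ independent of $\tau$. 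For $\tau$ sufficiently negative the relevant range of $t$ is nonempty (since both $T_0$ and the threshold grow sublinearly in $|\tau|$), so $\tau$ is bounded below, and $\tau_w>-\infty$.

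\emph{The infimum is attained:} Take a sequence $\tau_n\downarrow\tau_w$ with $\tau_n\in S$. Parabolic regularity bounds $|w_t|$ uniformly by some $\Omega<\infty$, so $|w(t+\tau_n,x)-w(t+\tau_w,x)|\le\Omega|\tau_n-\tau_w|$ uniformly in $(t,x)$. Given any $\eps>0$, pick $n$ so that $\Omega|\tau_n-\tau_w|<\eps$; combining with the liminf condition at $\tau_n$ (with slack $\eps$) yields eventually $w(t+\tau_w,x)\ge w(t+\tau_n,x)-\eps\ge u(t,x)-2\eps$. Since $\eps$ was arbitrary, the liminf at $\tau_w$ is $\ge 0$, so $\tau_w\in S$ and the infimum is a minimum, proving \eqref{3.18}.

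I expect the finiteness from below to be the main obstacle: while the intuition is clear (for very negative $\tau$ the reaction zone of $w(\cdot+\tau,\cdot)$ sits far to the left of $u$'s, so there must be a large intermediate region where $w\le\eps_0$ while $u\ge 1-\eps_0$), executing it requires carefully splitting the upper bound on $Z_w$ according to the sign of $t+\tau$ and ensuring that the $t$ furnished by the liminf threshold falls within the regime where the sharper backward bound $Z_w(s)\le c's+O_w(1)$ applies.
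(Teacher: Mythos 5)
Your proof of $\tau_w<\infty$ (comparison from $w(\tau_0,\cdot)\ge v=u(0,\cdot)$) and of the attainment of the infimum (via a uniform bound on the time derivative, so that $S$ is closed from below) are both correct and essentially the paper's arguments; the paper uses the bound \eqref{3.8} on $u_t$ where you use one on $w_t$, which is immaterial.

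The lower bound $\tau_w>-\infty$, however, has a genuine gap, and it is exactly the one you flag at the end. Your argument needs a single time $t$ lying in the intersection of two ranges: $t\ge T(\tau,\eps)$, the threshold after which the liminf condition delivers $w(t+\tau,\cdot)\ge u(t,\cdot)-\eps$, and $t\le -\tau$, so that the backward bound $Z_w(s)\le \tfrac{c_0+c_\zeta}{2}s+O_w(1)$ for $s\le 0$ (from \eqref{3.4b} run from $\tau=s$ to $t=0$) applies to $s=t+\tau$. Nothing controls $T(\tau,\eps)$: the definition of $S$ is a statement only about $t\to\infty$, and the threshold can a priori grow linearly in $|\tau|$. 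Indeed, at this stage of the paper $Z_u$ and $Z_w$ are only known to advance with average speeds pinched between $\tfrac{c_0+c_\zeta}{2}$ and $c_\xi$, which differ, so the initial gap $Z_u(0)-Z_w(\tau)\approx \tfrac{c_0+c_\zeta}{2}|\tau|$ could in principle close only after a time proportional to $|\tau|$ (with a constant that may exceed $1$); your claim that "the threshold grows sublinearly in $|\tau|$" is unproven and is essentially equivalent to what you are trying to establish. Replacing the backward bound by the forward bound $Z_w(s)\le c_\xi s+O_w(1)$ for $s\ge0$ does not rescue the argument, since $c_\xi\ge\tfrac{c_0+c_\zeta}{2}$ makes the resulting inequality vacuous for large $t$.

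The paper closes this gap with Lemma \ref{L.3.2}(i), which you do not invoke: since $Y_w(t_1)\to-\infty$ as $t_1\to-\infty$, one can choose $t_1$ so negative that $w(t_1,x)\le u(t_0,x)+\del$ for \emph{all} $x$ (with $t_0=t'_\del$), and Lemma \ref{L.3.2}(i) then upgrades this one-time ordering to $w(t+t_1-t_0,x)\le u(t,x)+\eps$ for \emph{all} $t\ge t_0$. Combined with the strict lower bound $u_t\ge\omega$ in the reaction zone \eqref{3.7}, this gives $\inf_x[w(t+t_1-t_0,x)-u(t+2\eps/\omega,x)]\le-\eps$ for all large $t$, hence $t_1-t_0-2\eps/\omega\notin S$ and $\tau_w\ge t_1-t_0-2\eps/\omega$. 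The point is that the stability lemma produces a statement valid for all large times, which is what is needed to contradict a $\liminf_{t\to\infty}$ condition; a comparison confined to a bounded window of $t$ cannot do so.
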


\begin{proof}
The set in \eqref{3.17} is an interval $(a,\infty)$ for some $a\le\infty$ due to $u_t>0$.
Inequality \eqref{3.4b} shows $\lim_{t\to\infty} Z_w(t) =\infty$, and the properties of $v$ give the existence of $\tau<\infty$ such that $w(\tau,x)\ge u(0,x)$ for all $x\in D$. The comparison principle then shows $w(t+\tau,x)\ge u(t,x)$ for all $t\ge 0$, $x\in D$ and so $\tau_w<\infty$.

In the opposite direction we notice that \eqref{3.4b} and \eqref{3.1} give  $\lim_{t\to -\infty} Y_{w}(t) =-\infty$. Hence for each $\del>0$ the condition of Lemma \ref{L.3.2}(i) is satisfied with a large negative $t_1$ and $t_0 = t'_\del$, where $t'_\del$ is from Lemma \ref{L.2.6}(ii). Then Lemma \ref{L.3.2}(i) and \eqref{3.7} prove for all $t\ge t_0$ that $\inf_{x\in D} [w(t+t_1-t_0,x)-u(t+2\eps/\omega,x)] \le -\eps$, provided we choose $\eps>0$ small enough and then $\del>0$  according to Lemma \ref{L.3.2}(i). Thus $\tau_w>t_1-t_0-2\eps/\omega>-\infty$.

Hence $\tau_w$ is finite and then the infimum must be a minimum by  \eqref{3.8}.
\end{proof}


\begin{lemma} \lb{L.3.4}
We have
\beq \lb{3.19}
\lim_{t\to\infty} \|w(t+\tau_w,x)-u(t,x)\|_{L^\infty_x} =0.
\eeq

\end{lemma}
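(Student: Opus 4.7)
Let $H(t)\equiv \sup_{x\in D}[w(t+\tau_w,x)-u(t,x)]$ and $G(t)\equiv \inf_{x\in D}[w(t+\tau_w,x)-u(t,x)]$. Since $\|w(t+\tau_w,\cdot)-u(t,\cdot)\|_{L^\infty_x}=\max\{H(t),-G(t)\}$, and \eqref{3.18} already provides $\liminf_{t\to\infty}G(t)\ge 0$ (hence $-G(t)\to 0^+$), the claim \eqref{3.19} is equivalent to showing $\limsup_{t\to\infty}H(t)\le 0$. Note also that $H(t)\ge G(t)$ implies $\liminf_{t\to\infty}H(t)\ge 0$.

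The first reduction is that it suffices to establish $\liminf_{t\to\infty}H(t)=0$. Indeed, given $\eps>0$, let $\delta=\delta(\eps)>0$ be the threshold furnished by Lemma \ref{L.3.2}(i). If $H(t_0)<\delta$ for some $t_0$, applying Lemma \ref{L.3.2}(i) with $t_1=t_0+\tau_w$ (so $t_1-t_0=\tau_w$) yields $w(t+\tau_w,x)\le u(t,x)+\eps$, i.e.\ $H(t)\le\eps$, for all $t\ge t_0$. Assuming $\liminf_{t\to\infty}H(t)=0$, such a $t_0$ exists for every $\eps$, so $\limsup_{t\to\infty}H(t)\le\eps$; letting $\eps\to 0$ gives the result.

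The crux is then to show $\liminf_{t\to\infty}H(t)=0$. I would argue by contradiction: suppose $\liminf_{t\to\infty}H(t)\ge\nu$ for some $\nu>0$, so $H(t)\ge\nu/2$ for all $t$ sufficiently large, and deduce the existence of some \emph{fixed} $\alpha>0$ such that $\liminf_{t\to\infty}\inf_x[w(t+\tau_w-\alpha,x)-u(t,x)]\ge 0$, contradicting the minimality of $\tau_w$ in \eqref{3.17}. The mechanism to decrease the shift is Lemma \ref{L.3.2}(ii), applied with $t_0=t+\alpha$ and $t_1=t+\tau_w$ (so $t_1-t_0=\tau_w-\alpha$): its hypothesis $w(t+\tau_w,x)\ge u(t+\alpha,x)-\delta$ follows from \eqref{3.18} and the bound $u(t+\alpha,x)\le u(t,x)+\Omega\alpha$ (from \eqref{3.8}), provided $\Omega\alpha+\eta\le\delta$, where $\eta$ is the tolerance in \eqref{3.18} at time $t$. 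Its conclusion yields the desired $w(s+\tau_w-\alpha,x)\ge u(s,x)-\eps$ for all large $s$ and all $x\in D$.

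The main obstacle is that the naive application just described only produces $\alpha=\alpha(\eps)\to 0$ as $\eps\to 0$, which by itself is insufficient to contradict the minimality of $\tau_w$. The hypothesis $H(t)\ge\nu/2>0$ must be used to ``unlock'' a uniformly positive $\alpha$. My plan is to take approximate maximizers $x(t)\in D$ with $w(t+\tau_w,x(t))-u(t,x(t))\ge\nu/2$; by the dichotomy used repeatedly in Section \ref{S3} (both functions approach $1$ as $x_1\to-\infty$ and $0$ as $x_1\to+\infty$, uniformly, together with the bounded-width estimates \eqref{3.1} and \eqref{3.4a}), $x(t)$ must lie within a uniformly bounded distance of the reaction zones of $u$ and $w(\cdot+\tau_w)$. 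At such points $x(t)$ the inequality needed by Lemma \ref{L.3.2}(ii) holds with margin $\nu/2-\Omega\alpha$, and I would leverage this margin, together with parabolic regularity and a compactness argument in the spirit of the end of Section \ref{S2} (extracting limit solutions $(\til u,\til w)$ of \eqref{1.1} with a limit reaction $\til f$), to show that $\til w-\til u\ge 0$ everywhere with $\til w(0,\til x)-\til u(0,\til x)\ge\nu/2$ at a finite limit point. Applying the strong maximum principle to the parabolic differential inequality satisfied by $\til w-\til u$ (using Lipschitz continuity of $\til f$), combined with the monotonicity $\til u_t\ge 0$ and boundary behavior at $x_1\to\pm\infty$ of the limits, then yields the contradiction and completes the proof.
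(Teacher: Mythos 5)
Your reduction to showing $\liminf_{t\to\infty}H(t)=0$ via Lemma \ref{L.3.2}(i) is exactly the paper's first step (the paper phrases it contrapositively: if \eqref{3.21} fails, then $\sup_x[w(t,x)-u(t,x)]\ge\del_0$ for all $t\ge1$), and the compactness extraction of limit solutions $\til w,\til u$ is also how the paper proceeds. But the crux of your argument --- how the persistent gap $H(t)\ge\nu/2$ produces a contradiction --- does not close as described. The margin $\nu/2$ at the approximate maximizers $x(t)$ cannot ``unlock'' a uniform $\alpha$ in Lemma \ref{L.3.2}(ii), because that lemma's hypothesis is a global inequality over all $x\in D$: off the reaction zone (say for $x_1$ well to its right) both functions are near $0$, the margin is absent, and $u_t\ge 0$ makes $u(t+\alpha,x)\ge u(t,x)$, so $w(t+\tau_w,x)\ge u(t+\alpha,x)-\del$ can genuinely fail there for small $\del$. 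Indeed, the other consequence of the minimality of $\tau_w$ --- the paper's \eqref{3.23}: for each $\tau>0$ there is $\del_\tau>0$ with $\inf_x[w(t-\tau,x)-u(t,x)]\le-\del_\tau$ for all $t\ge1$ --- says precisely that every backward-shifted global comparison fails somewhere by a definite amount. Your proposal never invokes this fact, yet it is the thing the contradiction must ultimately be played against.

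Relatedly, the final step is not a proof: the strong maximum principle applied to $\til w-\til u\ge 0$ with strict positivity at one point yields no contradiction (it only gives strict positivity later), and ``boundary behavior at $x_1\to\pm\infty$'' does not help because $\til u_t\ge0$ pushes the wrong way off the reaction zone. What the paper actually does is: (i) use Harnack to upgrade the pointwise gap $\ge\del_0$ to a uniform gap $\ge\del'$ on the whole reaction zone, hence $\til w(t,x)\ge\til u(t+\tau,x)$ there for some fixed $\tau\in(0,1)$ by \eqref{3.8}; (ii) observe that $z\equiv\til w-\til u(\cdot+\tau)$ satisfies a linear equation whose zero-order coefficient $r$ is $\le 0$ \emph{outside} the reaction zone (using that $f$ vanishes near $0$ and is non-increasing on $[\tht'',1]$, together with $\til u_t\ge0$), while $z\to 0$ as $x_1$ recedes from the reaction zone; and (iii) run a minimizing-sequence/sliding argument showing that the negative infimum of $z$ (which is $\le-\del_\tau$ by the limit version of \eqref{3.23}) would have to be a negative constant, contradicting (i) and the decay of $z$. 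Steps (ii)--(iii) are the real content of the proof and are missing from your plan; without them you cannot control $z$ off the reaction zone, which is exactly where the global comparison you aim for is in doubt.
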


\begin{proof}
We can assume without loss of generality that $\tau_w=0$ (otherwise we shift $w$ in $t$). Then \eqref{3.18} reads
\beq \lb{3.20}
\liminf_{t\to\infty} \inf_{x\in D} [ w(t,x)-u(t,x) ] \ge 0
\eeq
and we are left with proving
\beq \lb{3.21}
\limsup_{t\to\infty} \sup_{x\in D} [ w(t,x)-u(t,x) ] \le 0.
\eeq
Assume this is not true. Then by Lemma \ref{L.3.2}(i), there is $\del_0>0$ such that for all $t\ge 1$,
\beq \lb{3.22}
\sup_{x\in D} [ w(t,x)-u(t,x) ] \ge\del_0.
\eeq
Moreover, the definition of $\tau_w=0$ and Lemma \ref{L.3.2}(ii) show that for each $\tau>0$ there is $\del_\tau>0$ such that  for all $t\ge 1$,
\beq \lb{3.23}
\inf_{x\in D} [ w(t-\tau,x) -u(t,x) ] \le -\del_\tau.
\eeq
Finally, we claim that $Z_w(t)-Z_u(t)$ stays bounded as $t\to\infty$. The lower bound follows from  \eqref{3.20} and $L_{u},L_{w}<\infty$. The upper bound follows from \eqref{3.23} for $\tau=1$, $L_{u,\del_1},L_{w,\del_1}<\infty$, and a uniform upper bound on $Z_w(t)-Z_w(t-1)$  (due to \eqref{3.1} and \eqref{3.3b}).

As before, there is a sequence $t_n\to\infty$ such that the functions $w(t+t_n, x+\lfloor Z_w(t_n)\rfloor e_1)$ and $u(t+t_n, x+\lfloor Z_w(t_n)\rfloor e_1)$ converge in $C^{1;2}_{\rm loc}(\bbR\times D)$ to two solutions $\til w, \til u$ of \eqref{1.1} with some reaction $\til f$ which has all the properties of $f$. Moreover, $\til w, \til u$ are both transition fronts because of the boundedness of  $Z_w(t)-Z_u(t)$ and the properties of $w,u$ (namely, \eqref{3.1}, \eqref{3.3b}, \eqref{3.4b}, \eqref{3.3c}, \eqref{3.4a}, and \eqref{3.4c}). We also have $\til u_t\ge 0$ as well as 
\begin{align}
\til w(t,x)  \ge \til u(t,x) & \quad\text{for all $(t,x)\in\bbR\times D$}, \lb{3.24} \\
\sup_{x\in D} [ \til w(t,x) - \til u(t,x) ]  \ge\del_0 & \quad\text{for all $t\in\bbR$},  \lb{3.25} \\
\inf_{x\in D} [ \til w(t-\tau,x) - \til u(t,x) ]  \le -\del_\tau & \quad\text{for all $t\in\bbR$, $\tau>0$.} \notag 
\end{align}
This is thanks to \eqref{3.20}, \eqref{3.22}, \eqref{3.23}, $t_n\to\infty$, and the uniform boundedness  in $t$  of $\max\{ Z_{w,\eps}^+(t), Z_{u,\eps}^+(t)\} - \min\{ Z_{w,\eps}^-(t), Z_{u,\eps}^-(t) \}$ (for any $\eps>0$). 

We define $Z_{\til w,\eps}^\pm(t)$, $Z_{\til u,\eps}^\pm(t)$, $L_{\til w,\eps}$, $L_{\til u,\eps}$ analogously to $Z_{w,\eps}^\pm(t)$, $L_{w,\eps}$.  Then $Z_{\til w,\eps}^\pm(t) \ge Z_{\til u,\eps}^\pm(t)$ for any $\eps>0$ by \eqref{3.24}, and  $Z_{\til w,\eps}^+(t) - Z_{\til u,\eps}^-(t)$ is uniformly bounded in $t$ because $Z_w(t)-Z_u(t)$ stays bounded as $t\to\infty$. 
We let $Z^+(t)\equiv \max \{ Z_{\til w,\eps_0}^+(t), Z_{\til u,\eps_0}^+(t+1)  \}$ and $Z^-(t)\equiv Z_{\til u,\eps_0}^-(t)$ so that $Z^+(t)-Z^-(t)$ is also uniformly bounded in $t$. Inequality \eqref{3.25} shows that for each $t$ there is $x^t\in[Z_{\til u,\del_0}^-(t), Z_{\til w,\del_0}^+(t)] \times \bbT^{d-1}$ such that 
\[
\til w(t,x^t) - \til u(t,x^t)\ge\del_0.
\]
Then \eqref{3.24} and Harnack inequality give the existence of $\del'>0$ such that 
\[
\til w(t,x)-\til u(t,x)\ge\del' \quad\text{ whenever } x_1\in  [Z^+(t), Z^-(t)],
\]
and so \eqref{3.8} yieds the existence of $\tau\in (0,1)$ such that
\[
\til w(t,x)\ge \til u(t+\tau,x) \quad\text{ whenever } x_1\in [Z^+(t), Z^-(t)].
\]

We finish the proof with an argument similar to \cite{MNRR}. We define $z(t,x)\equiv \til w(t,x) - \til u(t+\tau,x) \in C^{1,\eta;2,\eta}(\bbR\times D)$ and notice that $z$ then satisfies
\[
z_t+q\cdot\nabla z - \divg(A\nabla  z ) = r(t,x) z
\]
with $|r(t,x)|\le K$. We also have 
\begin{align}
z(t,x)\ge 0 & \quad \text{ when } x_1\in [Z^+(t), Z^-(t)], \lb{3.30} \\
\inf_{x\in D}  z(t,x)   \le -\del_\tau & \quad\text{ for each } t\in\bbR, \notag \\ 
r(t,x)\le 0 & \quad \text{ when } x_1 \not\in [Z^+(t), Z^-(t)], \lb{3.32}
\end{align}
the last inequality holding because $\til f$ is non-increasing outside $[\eps_0,1-\eps_0]$ and  $\til u_t\ge 0$. Moreover, $z(t,x)\to 0$ as ${\rm dist} (x_1, [Z^+(t), Z^-(t)])\to \infty$ uniformly in $t$ because $\til w, \til u$ are transition fronts and hence have bounded width. Let $(t_n,x^{t_n})$ be such that
\[
\lim_{n\to\infty} z(t_n,x^{t_n}) = \del''\equiv \inf_{(t,x)\in\bbR\times D}  z(t,x)  <0.
\]
Notice that we then have a uniform bound on ${\rm dist} (x^{t_n}_1, [Z^+(t_n), Z^-(t_n)])$. Again a subsequence of the sequence of functions $z(t+t_n,x+\lfloor x^{t_n}_1\rfloor e_1)$ converges in $C^{1;2}_{\rm loc}(\bbR\times D)$ to a function $\til z$ with
\[
\til z(0,\til x)=\del''=\inf_{(t,x)\in\bbR\times D}  \til z(t,x)  <0
\]
for some $\til x \in\bbT^{d-1}$, and satisfying (due to \eqref{3.30} and \eqref{3.32})
\[
\til z_t+q\cdot\nabla \til z- \divg(A\nabla  \til z) \ge 0 \quad \text{ where } \til z(t,x)\le 0.
\]
The strong maximum principle then forces $\til z(t,x)\equiv \del''<0$, a contradiction with the uniform boundedness of ${\rm dist} (x^{t_n}_1, [Z^+(t_n), Z^-(t_n)])$ and \eqref{3.30}. This proves \eqref{3.21} and we are done.
\end{proof}

Our final ingredient is the claim that the convergence in \eqref{3.19} is uniform in $f$ and $w$.

\begin{lemma} \lb{L.3.5}
For any $C>0$ and fixed $q,A,f_0,f_1,\zeta,g,K,\tht''$, the convergence in Lemma \ref{L.3.4} is uniform in all $f$ as above and all fronts $w$ with $L_w\le C$. 
\end{lemma}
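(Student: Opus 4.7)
The plan is to argue by contradiction via compactness, reducing a would-be failure of uniformity to the pointwise conclusion of Lemma \ref{L.3.4} for a new pair of fronts constructed as limits. Suppose the conclusion fails: there exist $\eps_0>0$, reactions $f_n$ satisfying the hypotheses of Theorem \ref{T.1.1}(ii), transition fronts $w_n$ for \eqref{1.1} with $L_{w_n}\le C$, the associated solutions $u_n$ of \eqref{1.1} with initial datum $v$ from Lemma \ref{L.2.1}, and times $t_n\to\infty$ such that
\[
\|w_n(t_n+\tau_{w_n},\cdot)-u_n(t_n,\cdot)\|_{L^\infty_x}\ge\eps_0.
\]
After replacing each $w_n$ by a time shift we may assume $\tau_{w_n}=0$ for all $n$.

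Translate in time and space to bring the reaction zone to the origin: set $z_n\equiv\lfloor Z_{w_n}(t_n)\rfloor$ and put $\til w_n(t,x)\equiv w_n(t+t_n,x+z_ne_1)$, $\til u_n(t,x)\equiv u_n(t+t_n,x+z_ne_1)$, with $f_n$ shifted accordingly. The $f$-independent bounds of Section \ref{S2}, Lemma \ref{L.3.1}, and \eqref{3.4a}--\eqref{3.4c}, together with the assumption $L_{w_n}\le C$ which controls the $L_w$-dependent constants in Lemmas \ref{L.3.1} and \ref{L.3.2}, give $n$-independent control on widths, on $Z_{w_n}-Z_{u_n}$, on exponential decay via $\Psi$, and on the $\del$ produced by Lemma \ref{L.3.2}. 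Parabolic regularity together with the preservation-of-$\zeta$-majorization observation at the start of Section \ref{S2} then yield, along a subsequence, locally uniform limits $\til f$ and $\til w,\til u\in C^{1;2}_{\rm loc}(\bbR\times D)$, where $\til w,\til u$ are both transition fronts for \eqref{1.1} with reaction $\til f$ (satisfying the hypotheses of Theorem \ref{T.1.1}(ii)), $\til u_t\ge 0$, $\til w\ge\til u$ on $\bbR\times D$ (since $\tau_{w_n}=0$ combined with Lemma \ref{L.3.4} gives $\inf_{x\in D}[w_n(t,x)-u_n(t,x)]\to 0$ as $t\to\infty$ for each $n$, so the limit inequality holds at every $t\in\bbR$), and $\sup_{x\in D}[\til w(0,x)-\til u(0,x)]\ge\eps_0$.

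Next, one verifies uniform analogs of \eqref{3.24}, \eqref{3.25}, and \eqref{3.23} for the limits $\til w,\til u$ with $\del_0,\del_\tau$ depending only on the fixed parameters and $C$; the Harnack-plus-strong-maximum-principle argument from the last part of the proof of Lemma \ref{L.3.4} then applies verbatim to produce a contradiction. The analog of \eqref{3.25} at every $t\in\bbR$ follows from the $t=0$ supremum estimate combined with the pre-limit \eqref{3.22} (valid for all $t\ge 1$ with $\del_0=\del(\eps_0/2)$ uniform in $n$ by Lemma \ref{L.3.2}(i)) and a time-shift of $t_n$. The analog of \eqref{3.23} is the delicate point: choose $\tau$ large enough, uniformly in $n$, that \eqref{3.4a} and \eqref{3.4c} force $Z_{u_n}(t)\ge Z^+_{u_n,\eps_0}(t-\tau)+1$ for every $n$ and every sufficiently large $t$; then for such $n,t$ one finds $x_0\in D$ with $u_n(t-\tau,x_0)\le\eps_0$ and $u_n(t,x_0)\ge 1-\eps_0$. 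Applying Lemma \ref{L.3.4} to $(w_n,u_n)$ for fixed $n$ gives $w_n(t-\tau,x_0)\le u_n(t-\tau,x_0)+o(1)$ as $t\to\infty$, hence
\[
\liminf_{t\to\infty}\inf_{x\in D}[w_n(t-\tau,x)-u_n(t,x)]\le -(1-3\eps_0)
\]
uniformly in $n$; Lemma \ref{L.3.2}(ii) applied contrapositively with $\eps=(1-3\eps_0)/2$ converts this into $\inf_{x\in D}[w_n(t-\tau,x)-u_n(t,x)]\le -\del_\tau$ for every $t\ge 1$ with a uniform $\del_\tau>0$, and this bound passes to every $t\in\bbR$ in the limit.

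The main obstacle is this uniform strict-negativity estimate in the third paragraph. It relies on two pieces of uniformity built into the earlier lemmas: the liminf bound $(1-3\eps_0)$ is $n$-independent because it is forced by the deterministic reaction-zone spreading estimate \eqref{3.4c} applied to $u_n$ (which depends only on the fixed parameters, not on $f_n$ or $w_n$), while the constant $\del$ in Lemma \ref{L.3.2}(ii) depends only on $\eps$ and on $L_{w_n}\le C$. Once this is secured, the compactness and maximum-principle portion of the argument proceeds exactly as in the proof of Lemma \ref{L.3.4}.
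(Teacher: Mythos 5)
Your overall strategy --- recenter the sequence at the bad times $t_n$ and rerun the proof of Lemma \ref{L.3.4} on the limit --- is genuinely different from the paper's, and it has a gap at the pivotal step $\til w\ge\til u$ on $\bbR\times D$. You justify this by noting that Lemma \ref{L.3.4} gives $\inf_{x\in D}[w_n(t,x)-u_n(t,x)]\to 0$ as $t\to\infty$ \emph{for each fixed $n$}, and then passing to the limit of $w_n(t+t_n,\cdot)-u_n(t+t_n,\cdot)$. This does not follow: the rate of that convergence is precisely what is not known to be uniform in $n$ (it is the content of the lemma being proved), so for a fixed $t$ the times $t+t_n$ may always lie before the $n$-dependent time at which $\inf_x[w_n-u_n]$ first exceeds $-\del$, and the diagonal limit can be strictly negative somewhere; the argument is circular at this point. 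The same non-uniformity undermines your analogs of \eqref{3.22} and \eqref{3.23}: indeed \eqref{3.22} cannot even be asserted for each fixed $n$, since Lemma \ref{L.3.4} applied to $(w_n,u_n)$ gives $\|w_n(t,\cdot)-u_n(t,\cdot)\|_{L^\infty_x}\to 0$, so the limsup in the negation of \eqref{3.21} is zero for every $n$ and the paper's derivation of \eqref{3.22} is unavailable; and the ``uniform in $n$'' liminf bound in your third paragraph is in reality a per-$n$ statement whose onset time depends on $n$ through the unquantified $o(1)$. A smaller omission: the contradiction hypothesis only gives $\|w_n(t_n,\cdot)-u_n(t_n,\cdot)\|_{L^\infty_x}\ge\eps_0$, and you treat only the case where the signed supremum is large, not the case where the signed infimum is very negative.

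The paper sidesteps all of this by \emph{not} recentering at $t_n$: it extracts a limit of the untranslated triple $(f_n,w_n,u_n)$, so that the limit $u$ is again the solution with initial datum $v$ and the limit $w$ is a genuine front with $L_w\le C$ (the uniform bounds on $Z_{w_n}(0)$ and $Y_{w_n}(0)$ coming from the normalization $\tau_{w_n}=0$). Lemma \ref{L.3.4} is applied only to this limit pair, producing a single time $s_\del$ at which $w(s_\del+\tau_w,\cdot)$ and $u(s_\del,\cdot)$ are $\del$-close; locally uniform convergence plus the uniform tail estimates transfer this closeness to $(w_n,u_n)$ at that one time for all large $n$, and Lemma \ref{L.3.2} --- whose $\del$ depends only on $\eps$ and $L_{w_n}\le C$ --- then propagates it to all later times, uniformly in $n$. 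That last step is where the uniformity is actually manufactured, and your proposal contains no substitute for it. To salvage your route you would need an a priori one-sided comparison $w_n\ge u_n-o(1)$ at time $t_n$ that is uniform in $n$, which is not available from the stated lemmas.
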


{\it Remark.} We will see later that the hypothesis $L_w\le C$ is satisfied for some $C<\infty$ and all $f,w$. Thus the convergence is uniform in all $f,w$ as in Theorem \ref{T.1.1}(ii). 
\smallskip

\begin{proof}
Assume the contrary. Thus for some $C,\eps>0$ and each $n\in\bbN$, there are $w_n,u_n$ as in Lemma \ref{L.3.4} --- solving \eqref{1.1} with reactions $f_n$ (which satisfy the hypotheses of Theorem \ref{T.1.1}(ii) with uniform $q,A,f_0,f_1,\zeta,g,K,\tht''$) and with $\tau_{w_n}=0$ after a translation of $w_n$ in $t$ --- such that $L_{w_n}\le C$ and for some $t_n\to\infty$
\beq \lb{3.33}
 \|w_n(t_n,x)-u_n(t_n,x)\|_{L^\infty_x} >\eps.
\eeq
We will obtain a contradiction by finding a subsequence of $\{(f_n,w_n,u_n)\}_n$ which converges locally uniformly to $(f,w,u)$ such that $L_w\le C$ and \eqref{3.19} is violated. 

By parabolic regularity, for some $\eta>0$, the $w_n$ are uniformly bounded in $C^{1,\eta;2,\eta}(\bbR\times D)$ and the $u_n$ in $C^{1,\eta;2,\eta}([a,\infty)\times D)$ (for any $a>0$). We can thus choose a subsequence (which we again index by $n$) such that $f_n\to f$ in $C_{\rm loc}(D)$, 
$w_n\to w$ in $C^{1;2}_{\rm loc}(\bbR\times D)$ and $u_n\to u$ in $C^{1;2}_{\rm loc}((0,\infty) \times D)$. 
Therefore $w,u$ solve \eqref{1.1} on $\bbR\times D$ and $(0,\infty)\times D$, respectively.  Also, $u(0,x)= u_n(0,x)= v(x)$ holds because the $f_n$ are uniformly bounded and $v$ is continuous, so $\|u_n(t,x)- v(x)\|_{L^\infty_x}\to 0$ as $t\downarrow 0$, uniformly in $n$. We note that the limiting reaction $f$ again satisfies all the hypotheses, including $\zeta'$-majorization of $g$ for $\zeta'>\zeta-\sigma$.

Next we show that $w$ is a front. The $Z_{w_n}(0)$ must be uniformly bounded above because otherwise $w_n(-1,x)\ge v(x)$ for large $n$ and all $x\in D$, meaning that $\tau_{w_n}\le -1$, a contradiction. Similarly,  the $Y_{w_n}(0)$ are uniformly bounded below because of \eqref{3.3b} for $w_n$ and the fact that $Y_{w_n}(t'_\del)$ are uniformly bounded below for each $\del>0$ (by the argument in the second part of the proof of Lemma \ref{L.3.3} and $\tau_{w_n}=0$). Then Lemma \ref{L.3.1} and $L_{w_n}\le C$ show that $Z_{w_n}(0)$ and $Y_{w_n}(0)$ are uniformly bounded below and above, as are the average growth rates of $Z_{w_n}(t)$ and $Y_{w_n}(t)$ (due to \eqref{3.3b}, \eqref{3.4b}, and \eqref{3.1}). It follows from \eqref{3.4b} and $L_{w_n}\le C$ that the locally uniform limit $w$ is indeed a transition front with $L_w\le C$.

Thus Lemma \ref{L.3.4} applies to this $w$ and we have \eqref{3.19} for some $\tau_w$. So for each $\del>0$ there is $s_\del\ge t'_\del$ such that $\|w(s_\del+\tau_w,x)-u(s_\del,x)\|_{L^\infty_x} <\del$. Hence for each $M,\del>0$ and all large enough $n$ we have $\|w_n(s_\del+\tau_w,x)-u_n(s_\del,x)\|_{L^\infty_x(-M,M)} <2\del$. Since $Z_{w_n,\del}^-(s_\del+\tau_w)$, $Y_{w_n}(s_\del+\tau_w)$, $Y_{u_n}(s_\del)$ are uniformly bounded in $n$ by the argument above and $s_\del\ge t'_\del$, it follows that, in fact, $\|w_n(s_\del+\tau_w,x)-u_n(s_\del,x)\|_{L^\infty_x} < 2\del$ for all large enough $n$. Then $\del>0$ being arbitrary and Lemma \ref{L.3.2} show that for each $\eps'>0$ there are $N_{\eps'},r_{\eps'}$ such that for all $n>N_{\eps'}$ and $t> r_{\eps'}$,
\beq \lb{3.34}
 \|w_n(t+\tau_w,x)-u_n(t,x)\|_{L^\infty_x} < \eps'.
\eeq
Now \eqref{3.7} and $\eps_0\le \tfrac 14$ show for these $n,t$,
\[
\|w_n(t+\tau_w,x)-u_n(t+\tau_w,x)\|_{L^\infty_x} \ge \min \left\{ \tau_w \omega,\frac 12 \right\} - \eps'.
\]
If $\tau_w\neq 0$, then this contradicts $\tau_{w_n}=0$ and \eqref{3.19} when we take $\eps'$ small enough. Therefore $\tau_w=0$. But then after taking $\eps'= \eps$ and $n>N_\eps$ such that $t_n>r_\eps$, we obtain a contradiction between \eqref{3.33} and \eqref{3.34} with $t=t_n$.
\end{proof}

We can now proceed to prove Theorem \ref{T.1.1}(ii). Let $w$ be a transition front for \eqref{1.1} and translate it in $t$ so that $w(0,0)=\tht$. This is possible by \eqref{3.4b} although such translation may not be unique. Define $f_n(x,u)\equiv f(x-ne_1,u)$  and let $u_n$ solve \eqref{1.1} with reaction $f_n$ and $u_n(0,x)\equiv v(x)$. Pick $t_n$ so that $u_n(t_n,ne_1)=\tht$ and consider the front $w_n(t,x)\equiv w(t-t_n,x-ne_1)$ for \eqref{1.1} with $f_n$. 

We have $t_n\to\infty$ by \eqref{3.3b} as well as $L_{w_n}=L_w$ for each $n$. Then Lemma \ref{L.3.5} shows that for any $T\in\bbR$, uniformly in $t\ge T$,
\beq \lb{3.35}
\|w(t+\tau_{w_n},x)-u_n(t+t_n,x+ne_1)\|_{L^\infty_x}  = \|w_n(t+t_n+\tau_{w_n},x+ne_1)-u_n(t+t_n,x+ne_1)\|_{L^\infty_x}  \to 0
\eeq
as $n\to\infty$. This and $u_n(t_n,ne_1)=\tht$ show $w(\tau_{w_n},0)\to \tht$ as $n\to\infty$. Then $\tau_{w_n}$ must be bounded in $n$ by and $w(0,0)=\tht$, \eqref{3.4b}, and $L_w<\infty$. So there is a subsequence converging to some $\tau\in\bbR$. It follows from \eqref{3.8} and \eqref{3.35} that for each $t\in\bbR$,
\[
\|w(t+\tau,x)-u_n(t+t_n,x+ne_1)\|_{L^\infty_x}   \to 0
\]
along this subsequence. 

If now $w_1,w_2$ are two fronts for the same $f$, then we can choose the same subsequence for both, which gives the existence of $\tau_1,\tau_2$ such that $w_1(t+\tau_1,x)=w_2(t+\tau_2,x)$ for each $t,x$. Thus the two fronts are time shifts of each other, that is, each front is a time shift of the front $w$ constructed at the end of Section \ref{S2}.

Since this front satisfies $w_t>0$, the constants in this section do not depend on $L_w$. In particular, $\til C_2$ in \eqref{3.1} does not, which in turn gives a uniform in $f$ bound on $L_w$. This proves the remark after Lemma \ref{L.3.5}. 

\section{Stability of Fronts for Ignition Reactions} \lb{S4}

We will now prove Theorem \ref{T.1.1}(iii). We make the same assumptions as at the beginning of the last section and all constants will again depend on $q,A,f_0,f_1,\zeta,g,K,\tht''$ as well as on $Y, \mu,\nu$,  {\it but not on} $f$. Let us denote by $w_\pm$ the unique right- and left-moving fronts from the last section.


Without loss we will assume $\mu\le\lambda_\zeta/2$ and  \eqref{3.8a}. We can also assume $a=0$, after possibly shifting the domain. It will be notationally convenient to let $u$ be the solution of \eqref{1.1} with initial condition $v$ (with $\til\tht\equiv1-\eps_0/2$ as in the last section), and  $w$  the solution of \eqref{1.1} in question, with initial condition $w(0,x)\equiv w_0(x)$. 

Let us prove claim (a) in Theorem \ref{T.1.1}(iii) (i.e., Definition \ref{D.1.0a}(a) with $f,w_0$-uniform constants).  We have
\begin{align}
w_0(x) &\le e^{-\mu(x_1-Y)},  \lb{4.1}\\
w_0(x) &\ge \til\tht \chi_{(-\infty,0)}(x_1),  \lb{4.2}
\end{align}
where we have also assumed that $\nu= \til\tht-\tht$. This we can do without loss due to the following. Lemma \ref{L.2.6}(ii) in fact holds with $\tht+\nu$ in place of $(1+\tht)/2$ for any $\nu>0$ (see \cite{Xin} or Lemma \ref{L.5.1}) but with $\nu$-dependent $t_\eps'$ and $L'$. So if \eqref{4.2} holds with $\tht+\nu$ in place of $\til \tht$, then $w(\tau',x)\ge \til\tht \chi_{(-\infty,0)}(x_1)$ for $\tau'\equiv t_{1-\til\tht}'+L'/(c_0-\eps_0)$. We also have that if 
\beq \lb{4.2a}
Y_w(t) \equiv \inf \{y\,|\, w(t,x)\le\Phi(x_1-y,x) \text{ for all $x\in D$}\}
\eeq
with 
\[
\Phi(s,x)\equiv \frac {2 \sup_D \gamma(x;\lambda_\zeta)} { \inf_D \gamma(x;\lambda_\zeta) \inf_D \gamma(x;\mu)} e^{-\mu s} \gamma(x;\mu) >0,
\] 
corresponding to $\lambda=\mu$ in \eqref{2.8a}, then \eqref{3.3b} holds with $c_\xi=(\xi+\kappa(\mu))/\mu$ (and $\xi$ from Lemma \ref{L.2.4}). In particular, $Y_w(t)$ is again finite because \eqref{4.1} and 
\beq \lb{4.2b}
\Phi(s,x)\ge e^{-\mu s}
\eeq
imply $Y_w(0)\le Y$. Thus $w(\tau',x)\le e^{-\mu(x_1-Y-Y')}$ holds with the $f,w$-independent constant $Y'\equiv c_\xi\tau' + \mu^{-1}\sup_D \log \Phi(0,x)$. So \eqref{4.1} and \eqref{4.2} are satisfied for $w(\tau',x)$ in place of $w_0(x)$ and $Y+Y'$ in place of $Y$, with $\tau',Y'$ independent of $f,w$.

We define $X_w,Z^\pm_{w,\eps},L_{w,\eps},Z_w,L_w$ as before and $Y_w(t)$ by \eqref{4.2a}. The proof of claim (a) in Theorem \ref{T.1.1}(iii) will be essentially identical to the argument in Lemmas \ref{L.3.2}--\ref{L.3.5}, after we have established the basic properties \eqref{3.1}, \eqref{3.3b}, \eqref{3.4b} for $w$. 
We will then show uniform convergence of $u$ to a time shift of $w$. Since $u$ also uniformly converges to a time shift of $w_+$,  claim (a) in Theorem \ref{T.1.1}(iii) will thus be proved. The proof of claim (b) at the end of this section will be a slight variation on the same theme.


\begin{lemma} \lb{L.4.1}
The estimates \eqref{3.1}, \eqref{3.3b}, and \eqref{3.4b} hold with $f,u,w$-independent constants. 
\end{lemma}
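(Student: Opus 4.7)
I would prove the three estimates in turn, adapting the arguments of Section~\ref{S3}.

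For \eqref{3.3b} I would transcribe the proof of Lemma~\ref{L.2.4} with $\Phi$ (defined using $\lambda=\mu$) replacing $\Psi$. The eigenvalue identity \eqref{2.8a} with $\lambda=\mu$ yields that $\phi(t,x):=\Phi(x_1-Y_w(\tau)-c_\xi(t-\tau),x)$ solves $\phi_t+q\cdot\nabla\phi-\divg(A\nabla\phi)=\xi\phi$ with $c_\xi:=(\xi+\kappa(\mu))/\mu$, so is a global supersolution of \eqref{1.1} since $f(x,u)\le\xi u$. Comparison with $w(\tau,\cdot)\le\Phi(\cdot-Y_w(\tau),\cdot)$ gives $Y_w(t)\le Y_w(\tau)+c_\xi(t-\tau)$; the initial finiteness $Y_w(0)\le Y$ is immediate from \eqref{4.1} and \eqref{4.2b}.

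For \eqref{3.4b} I would invoke Lemma~\ref{L.2.6}(ii). Our choice $\tht''\ge\tht_0>\tht$ forces $\eps_0\le(1-\tht)/2$, so $w(\tau,x)\ge 1-\eps_0\ge(1+\tht)/2$ whenever $x_1\le Z_w(\tau)$ at any $\tau\ge 0$. Lemma~\ref{L.2.6}(ii) at time $\tau$ with $\til x=(Z_w(\tau)-L')e_1$ and parameter $\bar\eps:=\min\{\eps,(c_0-c_\zeta)/2\}$ then produces $w(\tau+t+t'_{\bar\eps},x)\ge 1-\bar\eps\ge 1-\eps$ throughout $x_1-(Z_w(\tau)-L')\in[-(c_0^--\bar\eps)t,(c_0-\bar\eps)t]$, with $c_0-\bar\eps\ge(c_0+c_\zeta)/2$ by the choice of $\bar\eps$. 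Absorbing $L'$ and the delay $t'_{\bar\eps}$ into an $\eps$-dependent constant $\til C_\eps$ delivers \eqref{3.4b}.

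For \eqref{3.1} the direction $Z_w(t)-Y_w(t)\le\til C_2$ is immediate: $w\ge 1-\eps_0$ at $x_1=Z_w(t)$ combined with $w\le\Phi(\cdot-Y_w(t),\cdot)$ and the exponential form of $\Phi$ forces $Z_w(t)-Y_w(t)\le\mu^{-1}\log[\sup_D\Phi(0,\cdot)/(1-\eps_0)]$.

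The reverse direction $Y_w(t)-Z_w(t)\le\til C_2$ is the main obstacle, and I would run the sliding scheme of Lemma~\ref{L.3.1}, crucially exploiting the ignition structure to get a tail propagation speed strictly less than the spreading speed of \eqref{3.4b}. The naive $\Phi$-supersolution from Step~1 only gives $Y_w$ moving at speed $c_\xi$, which need not be less than $(c_0+c_\zeta)/2$. The remedy is to set $M_0:=\mu^{-1}\log[\sup_D\Phi(0,\cdot)/\tht']$, so $\Phi(s,x)\le\tht'$ when $s\ge M_0$; on the half-space $\{x_1\ge Y_w(t)+M_0\}$ we then have $w\le\tht'$ and $f\equiv 0$, so \eqref{1.1} becomes linear and $\Phi(x_1-y(t),x)$ with $y'(t)=\kappa(\mu)/\mu$ is a supersolution. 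Convexity of $\kappa$ with $\kappa(0)=\kappa'(0)=0$ combined with $\mu\le\lambda_\zeta/2$ gives $\kappa(\mu)/\mu\le\kappa(\lambda_\zeta)/\lambda_\zeta<c_\zeta<(c_0+c_\zeta)/2$. Plugging this slow tail speed into the Lemma~\ref{L.3.1} sliding scheme, together with \eqref{3.4b}, shows that whenever the binding shift for $Y_w$ lies in this linear tail the gap $Y_w-Z_w$ contracts at rate $\ge(c_0+c_\zeta)/2-\kappa(\mu)/\mu>0$, while whenever the binding lies in the front region the gap is automatically controlled by $M_0$. The main technical work, and where I expect most of the effort to lie, is the construction and verification of a composite supersolution (the trivial bound $1$ on $\{x_1\le Y_w(t)+M_0\}$ glued to the linear $\Phi$-tail on $\{x_1\ge Y_w(t)+M_0\}$) which is a generalized supersolution of \eqref{1.1} across the moving interface; this replaces the single $\Psi$-supersolution used in Section~\ref{S3}.
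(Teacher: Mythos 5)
Your treatment of \eqref{3.3b} and of the easy direction $Z_w(t)-Y_w(t)\le\til C_2$ of \eqref{3.1} matches the paper, and your computation $\kappa(\mu)/\mu\le\kappa(\lambda_\zeta)/\lambda_\zeta<c_\zeta$ is exactly the inequality the paper uses. The genuine gap is in the hard direction $Y_w(t)-Z_w(t)\le\til C_2$, where you propose to rerun the sliding scheme of Lemma~\ref{L.3.1}. That scheme closes only because, at the sliding time $t_0$, the distance from the supersolution's reference point back to a point from which Lemma~\ref{L.2.6}(ii) can be launched is bounded by the width $L_{w,\eps}$ of the transition front (this is \eqref{3.2}), or alternatively because Lemma~\ref{L.2.6}(i) applies when $w_t\ge 0$. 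Here $w$ is a general solution: it is not monotone in $t$, so Lemma~\ref{L.2.6}(i) is unavailable, and it has no a priori bounded width, so the distance from $Y_w(t_0)$ (equivalently from your interface $Y_w(t_0)+M_0$, or from $Z^+_{w,\tht'}(t_0)$) back to $Z_w(t_0)$ --- which is what you need in order to invoke \eqref{3.4b} and obtain catch-up at speed $(c_0+c_\zeta)/2$ --- is precisely the quantity \eqref{3.1} is meant to bound. Your composite supersolution carries the same circularity: its gluing interface $\{x_1=Y_w(t)+M_0\}$ is defined through the unknown $Y_w(t)$, and verifying $\phi\ge w$ on that moving boundary amounts to assuming $Y_w(t)\le y(t)$, which is the conclusion. (A smaller instance of the same issue appears in your derivation of \eqref{3.4b}: Lemma~\ref{L.2.6}(ii) gives nothing for $t-\tau\le t'_{\bar\eps}$, and filling that window with a uniform constant again requires \eqref{3.1}, so proving \eqref{3.4b} first and feeding it into \eqref{3.1} is not a clean ordering.)

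The paper's proof takes a different and shorter route that bypasses all of this. From \eqref{4.2} one has $w_0\ge v$, hence $w\ge u$ and $Z_w\ge Z_u$ by comparison; and the supersolution $z(t,x)=u(t+\beta(t),x)+b_0\Phi(x_1-Y_0-c_\zeta(t-t_0),x)$ of Lemma~\ref{L.3.2}(i), with $\mu$ in place of $\lambda_\zeta/2$ and started at a uniformly bounded $t_0$ chosen so that $Z_u(t_0)\ge Y_0+C_{\eps_0}+C_2$, dominates $w$ from above thanks to \eqref{4.1}; this yields $Y_w(t)\le Y_u(t+t_1)$ for a uniform $t_1$. All three estimates then follow by transporting \eqref{3.3c}, \eqref{3.4a}, and \eqref{3.4c} --- already established for the monotone solution $u$, whose width \emph{is} controlled via Lemma~\ref{L.2.6}(i) --- through this two-sided sandwich. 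If you wish to salvage your approach, this sandwich is the missing ingredient: the width of $w$ must first be controlled by that of $u$ before any sliding argument can be initialized.
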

 
\begin{proof}
We have already proved \eqref{3.3b} and obviously we also have
\beq \lb{4.3d}
Y_w(t)\ge Z_w(t)-\til C_2
\eeq
for some $f,w$-independent $\til C_2$.
We next notice that \eqref{4.2} gives $w_0(x)\ge v(x)$, thus $w(t,x)\ge u(t,x)$ and so 
\beq \lb{4.3}
 Z^\pm_{w,\eps}(t)\ge Z^\pm_{u,\eps}(t) \qquad\text{and}\qquad  Z_{w}(t)\ge Z_{u}(t) 
\eeq 
for all $t\ge 0$. It is therefore sufficient to show that there is a $f,u,w$-independent $t_1$ such that
\beq \lb{4.3c}
Y_w(t)\le Y_u(t+t_1)
\eeq
for all $t\ge 0$, because then \eqref{3.1} and \eqref{3.4b} (with new $f,u,w$-independent constants) follow from \eqref{4.3d}, \eqref{4.3} and \eqref{3.3c}, \eqref{3.4a}, \eqref{3.4c}.

We prove this by bounding $w$ above by a uniformly bounded time shift of $u$ plus a small perturbation. The argument is very similar to that in the proof of Lemma \ref{L.3.2}(i), with $\mu$ in place of $\lambda_\zeta/2$. We let
\begin{align*}
b_0 & \equiv \frac { \eps_0 \mu(c_0-c_\zeta)   \omega}{2\Omega K \sup_D \Phi(0,x)} \qquad (\le \eps_0 \text{ due to \eqref{3.8a} and $\mu\le \lambda_\zeta/2$}), \\
Y_0 & \equiv Y+ \frac {|\log b_0|}\mu,
\end{align*}
and choose $t_0$ so that 
 \beq \lb{4.3a}
Z_u(t_0)\ge Y_0+C_{\eps_0}+C_2.
\eeq
 This can be done uniformly in $f,u,w$ thanks to \eqref{3.4c} and $Z_u(0)$ depending only on $v$. Let
\begin{align*}
\beta(t) & \equiv \frac {\eps_0} {\Omega} \left( 1-e^{-\mu(c_0-c_\zeta)(t-t_0)/2} \right), \\
\phi(t,x) & \equiv b_0 \Phi(x_1-Y_0-c_\zeta(t-t_0),x), \\
z(t,x) & \equiv \til u(t,x) +\phi(t,x) \equiv u(t+\beta(t),x)+\phi(t,x),
\end{align*}
so that \eqref{4.2b} and \eqref{4.1} give 
\beq \lb{4.4}
z(t_0,x)\ge\phi(t_0,x)\ge w_0(x)
\eeq
 for all $x\in D$. Convexity of $\kappa(\lambda)$, $\kappa(0)=0$, and $\mu<\lambda_\zeta$ yield
\[
 \kappa(\mu) \le \frac \mu{\lambda_\zeta} \kappa(\lambda_\zeta)  <  c_\zeta \mu,
\]
so that again $\phi_t+q\cdot\nabla\phi - \divg(A\nabla \phi) \ge 0$.

It then follows, as in the proof of Lemma \eqref{L.3.2}(i), that $z$ is a supersolution of \eqref{1.1}. The argument is  identical, with $\eps=\eps_0$, $\mu$ in place of $\lambda_\zeta/2$, \eqref{3.14a} replaced by 
\beq \lb{4.3b}
Z_u(t+\beta(t))-Y_0 -c_\zeta(t-t_0) \ge \frac{c_0-c_\zeta}2 (t-t_0) + C_2
\eeq
(which is immediate from \eqref{4.3a} and \eqref{3.4c}), and \eqref{3.14b} by
\beq \lb{4.3e}
\phi(t,x)\le b_0 e^{-\mu [ (c_0-c_\zeta)(t-t_0)/2 + C_2]} \sup_D \Phi(0,x) = \frac {\beta'(t)\omega} K e^{-\mu C_2} \le \eps_0 \frac{\mu(c_0-c_\zeta)}{2K} \le \eps_0
\eeq
for $x_1\ge Z_u(t+\beta(t))$.

Thus \eqref{4.4} yields $z(t+t_0,x)\ge w(t,x)$ for all $t\ge 0$ and $x\in D$.
But then  $\Psi(s,x)\le \tfrac 12 \Phi(s,x)$ for $s\ge 0$ and $b_0\le \tfrac 12$ give
\[
w(t,x)\le \til u(t+t_0,x) + \phi (t+t_0,x) \le \frac 12 \Phi \left(x_1-Y_u \left(t+t_0+\beta(t) \right),x\right) + \frac 12 \Phi \left(x_1-Y_0-c_\zeta(t-t_0),x \right)
\]
for $x_1\ge Y_u (t+t_0+\beta(t) )$. Since $Y_u (t+t_0+\beta(t) )\ge Y_0+c_\zeta(t-t_0)$  (by \eqref{4.3b} and \eqref{3.4a}) and $\Phi(s,x)\ge 1$ for $s\le 0$, we obtain $w(t,x)\le \Phi (x_1 - Y_u (t+t_0+\beta(t)),x)$ for $t\ge 0$ (and all $x_1$). This and $\beta(t)\le \eps_0/\Omega$ now yield \eqref{4.3c} with $t_1\equiv t_0+\eps_0/\Omega$.
 \end{proof}

\begin{lemma} \lb{L.4.2}
Lemmas \ref{L.3.2}, \ref{L.3.3}, \ref{L.3.4} hold for $u,w$ as above, and the convergence in Lemma \ref{L.3.4} is uniform in all $f,u,w$ as above (with fixed $q,A,f_0,f_1,\zeta,g,K,\tht''$).
\end{lemma}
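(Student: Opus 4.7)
The plan is to re-run the arguments of Lemmas \ref{L.3.2}--\ref{L.3.5} in the present setting, observing that Lemma \ref{L.4.1} provides precisely the quantitative estimates \eqref{3.1}, \eqref{3.3b}, \eqref{3.4b} that were the only properties of a transition front actually used in Section \ref{S3}, and moreover with $f,u,w$-independent constants. The one genuinely new feature --- that $w$ is defined only on $[0,\infty)\times D$ rather than on $\bbR\times D$ --- forces minor adjustments in two places but no essential change of strategy.

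For the analog of Lemma \ref{L.3.2}, the supersolution and subsolution $z_\pm$ are constructed exactly as in Section \ref{S3}, with the hypothesis $t_1\ge 0$ now required so that $w(t_1,\cdot)$ is defined. Every ingredient in the construction --- bounds on $Z_w,Y_w,X_w$, the strictly positive lower bound $\omega$ on $u_t$ in its transition zone, and the upper bound $\Omega$ on $u_t$ --- is already available from Lemma \ref{L.4.1} and Section \ref{S3} with $f,u,w$-independent constants, so the proof transfers verbatim. For the analog of Lemma \ref{L.3.3}, the upper bound $\tau_w<\infty$ follows because $w$ must spread locally uniformly to $1$ (the argument of Lemma \ref{L.2.2}, via the $\zeta$-majorization of $g$ and the lower bound \eqref{4.2}, applies to $w$ since $w_t$ need not be positive but this is not required for the spreading argument), so $w(\tau,\cdot)\ge v(\cdot)=u(0,\cdot)$ for some $\tau<\infty$ and then $w(t+\tau,x)\ge u(t,x)$ for $t\ge 0$ by comparison. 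For the lower bound $\tau_w>-\infty$, in place of invoking $Y_w(t)\to -\infty$ as $t\to -\infty$ (which is no longer available), I would combine \eqref{4.3c} with \eqref{3.1} and \eqref{3.4a} to derive $Z_w(t)\le Z_u(t+t_1)+C$, and then use this together with the linear growth rate \eqref{3.4c} of $Z_u$ and the uniform bounded width of $u$ to show that for $\tau$ sufficiently negative, $Z_w(t+\tau)$ lies far to the left of $Z_u(t)$ for all large $t$, forcing the liminf in \eqref{3.17} to be strictly negative.

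For the analog of Lemma \ref{L.3.4}, the argument is essentially unchanged: shift both $u$ and $w$ by a sequence $t_n\to\infty$ and extract locally uniform limits $\til u,\til w$ on $\bbR\times D$. Since $t_n\to\infty$, the shifted solutions are eventually defined on $[-M,\infty)\times D$ for any fixed $M$, so the limits are genuinely defined on all of $\bbR\times D$ and, by the uniform versions of \eqref{3.1}, \eqref{3.3b}, \eqref{3.4b}, \eqref{3.3c}, \eqref{3.4a}, \eqref{3.4c}, they are both transition fronts. The strong maximum principle then forces $\til z\equiv\delta''<0$ and the contradiction closes as before. For the uniformity claim, I would argue by contradiction as in Lemma \ref{L.3.5}: given a sequence $(f_n,u_n,w_n)$ violating uniform convergence at times $t_n\to\infty$, extract locally uniform limits $(f,u,w)$. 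The decay bound \eqref{4.1} and lower bound \eqref{4.2} survive under locally uniform convergence, so the limit satisfies the hypotheses of Section \ref{S4}; the newly-proved analog of Lemma \ref{L.3.4} then gives $L^\infty$-convergence of $u$ to a time shift of $w$, and the violation at $t_n$ produces a contradiction exactly as in Lemma \ref{L.3.5}, with the role of the positive lower bound $\omega$ on $u_t$ still played by an $f,u,w$-independent constant thanks to \eqref{3.7}.

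The main obstacle is the lower bound $\tau_w>-\infty$ in the analog of Lemma \ref{L.3.3}: the original proof exploited the fact that a transition front is defined for all $t\in\bbR$, so that shifting it backward could place $Y_w$ arbitrarily far to the left. Here that option is gone, and the replacement must use Lemma \ref{L.4.1}'s uniform comparison $Y_w(t)\le Y_u(t+t_1)$ to prevent $w$ from effectively being shifted too far backward relative to $u$. Once this substitution is made, the remaining arguments of Section \ref{S3} go through essentially verbatim with constants now depending additionally on $Y,\mu,\nu$ but not on $f$, $u$, or $w$.
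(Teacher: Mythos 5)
Your overall strategy is exactly the paper's: the proof of Lemma \ref{L.4.2} is a re-run of the proofs of Lemmas \ref{L.3.2}--\ref{L.3.5} using the $f,u,w$-independent estimates supplied by Lemma \ref{L.4.1}, and your treatment of the one structural difference --- that $w$ lives only on $[0,\infty)\times D$, so the lower bound $\tau_w>-\infty$ in the Lemma \ref{L.3.3} analog can no longer be obtained by sending $t_1\to-\infty$ --- is a correct and welcome elaboration of a point the paper passes over in silence (your replacement via $Y_w(t)\le Y_u(t+t_1)$ and the linear growth of $Z_u$ does the job).

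There is, however, one concrete place where ``transfers verbatim'' fails, and it is precisely the single modification the paper's proof does flag: in the Lemma \ref{L.3.2} analog the comparison functions must be rebuilt with the exponent $\mu$ in place of $\lambda_\zeta/2$. In Section \ref{S4} the function $Y_w$ is redefined by \eqref{4.2a} using the rate-$\mu$ function $\Phi$, because $w(t_0,\cdot)$ only decays like $e^{-\mu x_1}$ with $\mu\le\lambda_\zeta/2$; if one keeps the Section \ref{S3} perturbation $\phi_+$ built from $\gamma(x;\lambda_\zeta/2)e^{-\lambda_\zeta s/2}$, then the initial comparison $z_+(t_0,x)\ge w(t_0,x)$ breaks down for large $x_1$ whenever $\mu<\lambda_\zeta/2$, since $\phi_+$ then decays strictly faster than $w$ and the inequality $b_\eps\Phi\ge\Psi$ used to control the region $x_1\ge Y_w(t_0)+2|\ln b_\eps|/\lambda_\zeta$ no longer closes against the rate-$\mu$ definition of $Y_w$. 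The fix is mechanical (use $\kappa(\mu)\le(\mu/\lambda_\zeta)\kappa(\lambda_\zeta)<c_\zeta\mu$ by convexity, exactly as in the proof of Lemma \ref{L.4.1}, so $\phi_\pm$ built at rate $\mu$ are still super/subsolutions of the linearized equation), but it must be stated. A second, smaller point: in the Lemma \ref{L.3.5} analog you can no longer normalize $\tau_{w_n}=0$ by translating $w_n$ in time; instead one uses that the two-sided bounds on $\tau_{w_n}$ from the Lemma \ref{L.3.3} analog are uniform in $f,u,w$, passes to a subsequence with $\tau_{w_n}$ convergent, and notes that the uniform bounds on $Z_{w_n}(0)$ and $Y_{w_n}(0)$ now come for free from \eqref{4.1}--\eqref{4.2} rather than from the normalization. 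With these two adjustments your argument is complete.
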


\begin{proof}
This is identical to the  proofs of Lemmas \ref{L.3.2}--\ref{L.3.5}, the only change being the replacement of $\lambda_\zeta$ by $\mu$ in the proof of Lemma \ref{L.3.2}.
\end{proof}

This and $f$-uniform convergence of $u$ to a time shift of $w_+$  prove $f,w$-uniform convergence of $w$ to a time shift of $w_+$ in $L^\infty_x$, claim (a) in Theorem \ref{T.1.1}(iii). 

The proof of claim (b) is virtually identical, with a separate treatment of the two reaction zones of $w$ (one on either side of $x_1=a$) moving right and left. This requires the adjustment of the definition of  $Z^-_{w,\eps}(t)$ (for the right-moving reaction zone) to
\[
Z^-_{w,\eps}(t) \equiv \sup \{y\ge a \,\big|\, w(t,x)\ge 1-\eps \text{ when } x_1\in [a,y]\},
\]
and a restriction of all the estimates to $x_1\ge a$. The rest of the proof is unchanged because our subsolution $z_-$ in Lemma \ref{L.3.2}(ii) is in fact negative for $t\ge t_0$ and  $x_1<a$, as long as $t_0$ is large enough (depending on $\eps$) so that $\phi_-(t,x)\ge 1$ for these $t,x_1$ (see \eqref{3.15a}). Thus we still obtain $z_-\le w$ and ultimately prove $f,w$-uniform convergence in $L^\infty_x(D_a^+)$ of $w$ to a time shift of $u$ (and hence of $w_+$), with $D^+_a\equiv[a,\infty)\times\bbT^{d-1}$. A similar treatment of the left-moving reaction zone of $w$ gives a $f,w$-uniform convergence in $L^\infty_x(D_a^-)$ of $w$ to a time shift of $w_-$, with $D^-_a\equiv (-\infty,a]\times\bbT^{d-1}$. Since $w_\pm$ converge $f$-uniformly to 1 in $L^\infty_x(D_a^\mp)$ by Lemma \ref{L.2.6}, the claim follows.

\medskip
\appendix
\section*{Appendix. The Spreading Lemma and Transition Fronts for Homogeneous Ignition Reactions} \lb{SA}
\renewcommand{\theequation}{5.\arabic{equation}}
\renewcommand{\thetheorem}{5.\arabic{theorem}}
\setcounter{theorem}{0}
\setcounter{equation}{0}

We will now show how one can use our arguments to obtain a proof of Lemma \ref{L.2.6}(ii), which is from \cite{Xin}, without the use of \cite{Xin}. In fact, we will prove a slightly stronger result. In the course of its proof we will also prove Theorem \ref{T.1.1} for $f(x,u)=f_0(u)$, showing that $c_0$ in that theorem is well defined. Recall that $c_0,c_0^->0$ are the speeds of the unique right- and left-moving fronts for \eqref{5.1} below.

\begin{lemma} \lb{L.5.1}
Let $q,A,f_0$ be as in (H1),(H2).
Then for each $\nu>0$ there is $L_\nu>0$  such that for every $\eps>0$ there is $t'_\eps<\infty$ satisfying the following. If $u:[0,\infty)\times D\to [0,1]$ solves 
\beq \lb{5.1}
u_t + q(x)\cdot\nabla u = \divg(A(x)\nabla u) + f_0(u),
\eeq
 and $\til x\in D$ is such that  $\inf \big\{ u(0,x) \,\big|\, |x_1-\til x_1|\le L_\nu \big\} \ge \tht+\nu$, then for each $t\ge 0$ we have
\beq \lb{5.2}
\inf \big\{ u(t+ t'_\eps,x) \,\big|\, x_1-\til x_1\in [-c_0^- t, c_0t] \big\} \ge 1-\eps.
\eeq
\end{lemma}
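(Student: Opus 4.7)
The plan is to establish Lemma 5.1 in three stages: an unconditional local-uniform spreading result (independent of any speed), construction of right- and left-moving pulsating fronts for \eqref{5.1} using this spreading in place of Lemma \ref{L.2.6}, and finally the quantitative spreading estimate \eqref{5.2} by comparison with the fronts. This ordering is forced by the need to avoid circular use of Lemma \ref{L.2.6} inside the Appendix.

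First I would prove the preliminary statement: for every $\nu>0$ there is $L_\nu$ such that any solution of \eqref{5.1} with $u(0,x)\ge(\tht+\nu)\chi_{[-L_\nu,L_\nu]}(x_1)$ satisfies $u(t,x)\to 1$ locally uniformly in $x\in D$. Pick $\tilth\in(\tht,\tht+\nu)$ close to $\tht$ and adapt the construction of Lemma \ref{L.2.1} to the slab $(-L_\nu,L_\nu)\times\bbT^{d-1}$ with zero Dirichlet data on $\{x_1=\pm L_\nu\}$. For $L_\nu$ large enough (depending only on $q,A,f_0,\nu$), the same composition $\rho\circ v_\eps$ argument yields a compactly supported stationary subsolution $\underline v$ of \eqref{5.1} with $\underline v\le \tht+\nu$ and $\underline v=\tilth$ on a central subslab. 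By parabolic comparison $u(t,\cdot)\ge \underline u(t,\cdot)$ where $\underline u$ is the solution with data $\underline v$; as in Lemma \ref{L.2.2}, $\underline u_t\ge 0$ so $\underline u_\infty(x)=\lim_{t\to\infty}\underline u(t,x)$ solves the stationary equation, Lemma \ref{L.2.3}(ii) forces it to be constant, and $\|\underline u_\infty\|_\infty\ge \tilth>\tht$ identifies the constant as $1$. Parabolic regularity upgrades the convergence to locally uniform.

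Next I would construct pulsating fronts for \eqref{5.1}. Running the Section \ref{S2} construction with $f=f_0$ produces the sequence $u_n$ from Lemma \ref{L.2.1} with $u_n(\tau_n,x)=v(x+ne_1)$, monotone in $t$ and converging to $1$ locally uniformly by Lemma \ref{L.2.2}; Lemma \ref{L.2.4} still gives the upper bound \eqref{2.11} on $Y_n$. The only step in Section \ref{S2} that actually uses Lemma \ref{L.2.6} is the closure of the width estimate in Lemma \ref{L.2.5}. Here the preliminary spreading just proved substitutes directly: once $u_n(t,\tilde x)\ge\tht+\nu$ somewhere in the reaction zone, monotonicity of $u_n$ in $t$ and parabolic regularity guarantee $u_n(t+1,\cdot)\ge\tht+\nu$ on a ball around $\tilde x$, and the preliminary spreading then yields an explicit positive lower bound on the average speed of advance of $X_n$, playing the role of $c_0-\eps$ from Lemma \ref{L.2.6}(i). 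The rest of Lemma \ref{L.2.5} is unchanged, so a subsequential limit of the $u_n$ produces a transition front $w_+$ with $(w_+)_t>0$. The uniqueness argument of Section \ref{S3} now applies to this $f_0$, with the same substitution, and since $f_0$ is trivially $1$-periodic in $x_1$, Theorem \ref{T.1.3} identifies $w_+$ as a pulsating front; its period is some $c_0>0$, the right-moving front speed. Reversing $x_1\mapsto -x_1$ produces $w_-$ and $c_0^-$.

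Finally I would read off \eqref{5.2} by comparison with these fronts. Given the hypotheses of Lemma \ref{L.5.1}, stage one supplies $s_0=s_0(\nu)$ (uniform in $u,f_0$-hypotheses) such that $u(s_0,x)\ge\tilth$ on a large cylindrical slab around $\tilde x$ whose $x_1$-extent exceeds the width of the reaction zone of $w_+$. A suitable shift $w_+(r,\cdot-y_+e_1)$ then lies pointwise below $u(s_0,\cdot)$ on all of $D$ (using that $w_+\to 0$ at $+\infty$), so parabolic comparison gives $u(s_0+t,x)\ge w_+(r+t,x-y_+e_1)$ for all $t\ge 0$. Since $w_+$ is pulsating with speed $c_0$, for each $\eps>0$ there is $M_\eps$ with $w_+(\tau,x)\ge 1-\eps$ whenever $x_1\le c_0\tau-M_\eps$; unwinding the shifts gives $u(s_0+t,x)\ge 1-\eps$ on $x_1-\tilde x_1\le c_0 t-M'_\eps$ with $M'_\eps$ uniform in $u$. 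The symmetric argument with $w_-$ handles $x_1-\tilde x_1\ge -(c_0^-t-M'_\eps)$. Choosing $t'_\eps$ large enough (depending only on $\eps,\nu$ through $s_0,M'_\eps,c_0,c_0^-$) to absorb all the additive constants into the shift of the time variable yields \eqref{5.2}.

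The main obstacle is stage two: we must reprove the uniform width bound of Lemma \ref{L.2.5} for $f=f_0$ without invoking Lemma \ref{L.2.6}. Stage one is designed precisely to supply the missing ingredient --- an explicit lower bound on the speed of advance of the reaction zone coming purely from a stationary-subsolution / Liouville argument --- and once this is in hand, stages one and three are routine.
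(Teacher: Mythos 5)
Your three-stage architecture is essentially the paper's own: a weak spreading result from a compactly supported subsolution plus the Liouville lemma, a bootstrap of the Section \ref{S2}--\ref{S4} machinery for $f_0$ using that weak result in place of Lemma \ref{L.2.6}, and a final comparison with the resulting pulsating fronts. However, there are two genuine gaps.

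The first is in the passage from stage one to stage two. Your preliminary statement is only qualitative ($u(t,x)\to 1$ locally uniformly), and locally uniform convergence alone does \emph{not} yield a positive lower bound on the average speed of advance of $X_n$: the time needed to ignite the slab $\{|x_1|\le R\}$ could a priori grow superlinearly in $R$. The missing step is the periodicity iteration: for the solution $\underline u$ with initial datum equal to the compactly supported subsolution $\underline v\le\tht+\nu$, locally uniform convergence to $1$ gives a finite $\tau_\nu$ with $\underline u(\tau_\nu,x)\ge(\tht+\nu)\chi_{[-L_\nu-p,L_\nu+p]}(x_1)\ge\max\{\underline v(x\pm pe_1),\underline v(x)\}$, and then periodicity of $q,A$ and the comparison principle give by induction $\underline u(n\tau_\nu,x)\ge\tht+\nu$ for $|x_1|\le L_\nu+np$, i.e.\ a quantitative spreading speed $c_0'\equiv p/\tau_\nu>0$. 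Only with this $c_0'$ in hand can the competition in Lemma \ref{L.2.5} be closed --- and even then one must \emph{re-choose} $\zeta$ small enough that $c_\zeta<c_0'$ (possible since $\kappa(0)=\kappa'(0)=0$ forces $c_\zeta\to 0$ as $\zeta\to 0$), because the condition $\zeta<\zeta_0$ is unavailable while $c_0$ is still being constructed; you do not address this. Relatedly, your upgrade from ``$u_n(t,\til x)\ge\tht+\nu$ at a point'' to ``$\ge\tht+\nu$ on a slab of width $2L_\nu$'' via parabolic regularity only produces a ball of fixed small radius set by gradient bounds, not the (large) radius $L_\nu$; one needs the compactness argument by which the paper derives Lemma \ref{L.2.6}(i) from (ii).

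The second gap is in stage three: no space-time shift of $w_+$ lies below $u(s_0,\cdot)$ on all of $D$, because $w_+(t,x)\to 1$ as $x_1\to-\infty$ while $u(s_0,\cdot)$ need not be close to $1$ far to the left of $\til x$ (the hypothesis controls $u_0$ only on $|x_1-\til x_1|\le L_\nu$). A comparison on a half-cylinder fails too, since the parabolic boundary ordering cannot be maintained for all $t$. The paper avoids this by first re-proving the stability statement \eqref{1.5} of Theorem \ref{T.1.1}(iii)(b) for $f_0$ in the bootstrap, which gives uniform convergence of $u$ to $w_+(\cdot+\tau_+,\cdot)+w_-(\cdot+\tau_-,\cdot)-1$; the spreading estimate \eqref{5.2} with the true speeds $c_0,c_0^-$ then follows from that convergence together with the comparison principle, rather than from a direct one-sided comparison with a single front.
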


{\it Remark.} The comparison principle then gives the same for solutions of \eqref{1.1} with $f\ge f_0$. This also gives Lemma \ref{L.2.6}(i) with $c_0, c_0^-$ in place of $c_0-\eps, c_0^- -\eps$. Thus $(c_0+c_\zeta)/2$ can be replaced by $c_0$ in \eqref{3.4b} and  \eqref{3.4c}, proving the first claim in Remark 1 after Theorem \ref{T.1.1}.

\begin{proof}
Let $f(x,u)\equiv f_0(u)$ and let us reprove Theorem \ref{T.1.1} without relying on Lemma \ref{L.2.6} as originally stated. We first let $\til\tht=\tht+\nu$ and construct a compactly supported initial datum $v(x)\le\til\tht$ that satisfies \eqref{2.1}. This is done as in Lemma \ref{L.2.1} but cutting off $v$ on both sides. We let $\til v_+$ be as $\til v$ in that lemma and $\til v_-$ be a $C^2$ solution of $-\divg(A\nabla \til v_-)+q\cdot \nabla\til v_- = -q_1+\divg(Ae_1)$ on $\bbT^d$, periodically continued to $D$.  Let $v_{\pm,\eps(x)} \equiv \eps(\til v_\pm(x)\mp x_1)$ and $v(x)\equiv \rho( \min\{ v_{+,\eps}(x), v_{-,\eps}(x)+4 \} )$ for $\eps>0$. So $v$ is compactly supported and if $\eps$ is small, then there is $a\in\bbR$ and $l>0$ such that 
\[
v(x)=\begin{cases} \til\tht & x_1\in[a-l,a+l], \\ \rho( v_{+,\eps}(x)) & x_1\ge a, \\ \rho( v_{-,\eps}(x)+4) & x_1\le a \end{cases}
\]
($a$ is such that $v_{\pm,\eps}(x)\approx \pm 2$ when $x_1\approx a$).
We also have $-\divg(A\nabla v_{\pm,\eps})+q\cdot \nabla v_{\pm,\eps} =0$, so for some distribution $T\ge 0$ supported on the set $D_\eps\equiv\{x\in D\,|\, v_{+,\eps}(x)=0 \text{ or } v_{-,\eps}(x)=0\}$ and with $\til v$ standing for $\til v_\pm$ when $\pm(x_1-a)\ge 0$,
\[
-\divg(A\nabla  v)+q\cdot \nabla v = -\eps^2 \chi_{D\setminus D_\eps} \rho''(\min\{ v_{+,\eps}(x), v_{-,\eps}(x)+4 \}) (\nabla \til v-e_1) \cdot A (\nabla \til v-e_1)  - T,  
\]
which is again less than or equal to $f_0(v)$ if $\eps$ is small enough.

Next let $L_\nu<\infty$ be such that $v$ is supported in $[-L_\nu,L_\nu]\times\bbT^{d-1}$. Let $u$ solve \eqref{5.1} with $u(0,x)=v(x)$, so that Lemma \ref{L.2.2} holds for $u$ and $t>0$. Thus  there is $\tau_\nu<\infty$ such that 
\[
u(\tau_\nu,x)\ge \til\tht \chi_{[-L_\nu-p,L_\nu+p]}(x_1)\ge \max \{v(x-pe_1),v(x),v(x+pe_1) \}. 
\]
This, \eqref{2.3} for $u$, and the comparison principle then prove Lemma \ref{L.5.1} with $c_0'\equiv p/\tau_\nu>0$ in place of $c_0,c_0^-$. This in turn proves Lemma \ref{L.2.6} with $c_0'$ in place of $c_0-\eps, c_0^- -\eps$. 

We now notice that $f_0$ being independent of $x$ and positive on $(\tht,1)$ shows that $f_0$ $\zeta$-majorizes some ($\zeta$-dependent) $g$ as in Theorem \ref{T.1.3} for each $\zeta>0$. We therefore choose $\zeta>0$ small enough (and a corresponding $g$) so that $c_\zeta<c_0'$ in \eqref{2.8c} and $f_0$ $\zeta'$-majorizes $g$ for all $\zeta'>\zeta-\sigma$ (with $\sigma\equiv \zeta/2$). This can be done because $\kappa(0)=\kappa'(0)=0$ \cite[Proposition~5.7(iii)]{BH}.  Now we can perform the rest of the proof of Theorem \ref{T.1.1} for \eqref{5.1} using $c_0'$ in place of $c_0,c_0^-$ (and, in particular, $c_\zeta<c_0'$ in place of $c_\zeta<c_0$). This yields the existence of a unique right-moving transition front for \eqref{5.1} with some speed $c_0>0$, and similarly a left-moving one with speed $c_0^->0$, as well as convergence of general solutions to them as in Theorem~\ref{T.1.1}(iii). So the solution $u$ above converges in $L^\infty_x$ to some time shifts of these fronts in the sense of \eqref{1.5}. This and the comparison principle now proves \eqref{5.2}.
\end{proof}



\end{document}